\newtheorem{thm}[subsection]{Theorem}
\newtheorem{defn}[subsection]{Definition}
\newtheorem{claim}[subsection]{Claim}
\newtheorem{lemma}[subsection]{Lemma}
\newtheorem{remark}[subsection]{Remark}
\theoremstyle{definition}
\newcommand{\cat}{\mathcal}
\newcommand{\lra}{\longrightarrow}
\newcommand{\llra}[1]{\stackrel{#1}{\lra}}
\newcommand{\R}{\mathbb R}
\newcommand{\Z}{\mathbb Z}
\newcommand{\C}{\mathbb C}
\newcommand{\Kth}{\mathbb K}
\newcommand{\Kk}{\mathbb K\mathbb U}
\newcommand{\Dirac}{\textup{\mbox{D\hspace{-0.6em}\raisebox{.1ex}%
{/}\hspace{.07em}}}}
\newcommand{\gG}{\mbox{\german g}}
\newcommand{\gH}{\mbox{\german h}}
\newcommand{\gB}{\mbox{\german b}}
\newcommand{\gZ}{\mbox{\german Z}}
\newcommand{\gC}{{\cat S}}
\newcommand{\G}{{\cat G}}
\DeclareMathOperator{\hocolim}{hocolim}
\DeclareMathOperator{\Map}{Map}
\DeclareMathOperator{\Hom}{Hom}
\newfont{\german}{eufm10}
\begin{document}
\pagestyle{plain}

\title
{Dominant K-theory and Integrable highest weight representations of Kac-Moody groups}
\author{Nitu Kitchloo}
\address{Department of Mathematics, University of California, San
  Diego, USA}
\email{nitu@math.ucsd.edu}
\thanks{Nitu Kitchloo is supported in part by NSF through grant DMS
  0705159.}

\date{\today}


\dedicatory{To Haynes Miller on his 60th birthday}

{\abstract

\noindent
We give a topological interpretation of the highest weight representations of Kac-Moody groups. Given the unitary form $\G$ of a Kac-Moody group (over $\C$), we define a version of equivariant K-theory, $\Kth_{\G}$ on the category of proper $\G$-CW complexes. We then study Kac-Moody groups of compact type in detail (see Section 2). In particular, we show that the Grothendieck group of integrable hightest weight representations of a Kac-Moody group $\G$ of compact type, maps isomorphically onto $\tilde{\Kth}_{\G}^*(E\G)$, where $E\G$ is the classifying space of proper $\G$-actions. In the Affine case, this agrees very well with recent results of Freed-Hopkins-Teleman. We also explicitly compute $\Kth_{\G}^*(E\G)$ for Kac-Moody groups of extended compact type, which includes the Kac-Moody group $\mbox{E}_{10}$.}
\maketitle

\tableofcontents
\section{Introduction}

\noindent
Given a compact Lie group $G$ and a $G$-space $X$, the equivariant K-theory of $X$, $K_{G}^*(X)$ can be described geometrically in terms of equivariant vector bundles on $X$. If one tries to relax the condition of $G$ being compact, one immediately runs into technical problems in the definition of equivariant K-theory. One way around this problem is to impose conditions on the action of the group $G$ on the space $X$. By a proper action of a topological group $\G$ on a space $X$, we shall mean that $X$ has the structure of a $\G$-CW complex with compact isotropy subgroups. There exists a universal space with a proper $\G$-action, known as the classifying space for proper $\G$-actions, which is a terminal object (up to $\G$-equivariant homotopy) in the category of proper $\G$-spaces. One may give an alternate identification of this classifying space as:
\begin{defn} For a topological group $\G$, the  classifying space for proper actions $E\G$ is a $\G$-CW complex with the property that all the isotropy subgroups are compact, and given a compact subgroup $H \subseteq \G$, the fixed point space $E\G^H$ is contractible. 
\end{defn}

\noindent
Notice that if $\G$ is a compact Lie group, then $E\G$ is simply equivalent to a point, and so $K_{\G}^*(E\G)$ is isomorphic to the representation ring of $\G$. For a general non-compact group $\G$, the definition or geometric meaning of $K_{\G}^*(E\G)$ remains unclear. 

\noindent
In this paper, we deal with a class of topological groups known as Kac-Moody groups \cite{K1,K2}. By a Kac-Moody group, we shall mean the unitary form of a split Kac-Moody group over $\C$. We refer the reader to \cite{Ku} for a beautiful treatment of the subject. These groups form a natural extension of the class of compact Lie groups, and share many of their properties. They are known to contain the class of (polynomial) Loop groups, which go by the name of Affine Kac-Moody groups. With the exception of compact Lie groups, Kac-Moody groups over $\C$ are not even locally compact (local compactness holds for Kac-Moody groups defined over finite fields). However, the theory of integrable highest weight representations does extend to the world of Kac-Moody groups. These representations are none other than the positive energy representations in the case of a Loop group. It is therefore natural to ask if some version of equivariant K-theory for Kac-Moody groups encodes the integrable highest weight representations.

\noindent
The object of this paper is two fold. Firstly, we define equivariant K-theory $\Kth_{\G}$ as a functor on the category of proper $\G$-CW complexes, where $\G$ is a Kac-Moody group. For reasons that will become clear later, we shall call this functor "Dominant K-theory". For a compact Lie group $\G$, this functor is usual equivariant K-theory. Next, we build an explicit model for the classifying space of proper $\G$-actions, $E\G$, and calculate the groups $\Kth_{\G}^*(E\G)$ for Kac-Moody groups $\G$ of compact, and extended compact type (see Section 2). Indeed, we construct a map from the Grothendieck group of highest weight representations of $\G$, to $\Kth_{\G}^*(E\G)$ for a Kac-Moody group of compact type, and show that this map is an isomorphism. 

\noindent
This document is directly inspired by the following recent result of Freed, Hopkins and Teleman \cite{FHT}: Let $\mbox{G}$ denote a compact Lie group and let $\tilde{\mbox{L}}\mbox{G}$ denote the universal central extension of the group of smooth free loops on $\mbox{G}$. In \cite{FHT}, the authors calculate the twisted equivariant K-theory of the conjugation action of $\mbox{G}$ on itself and describe it as the Grothendieck group of positive energy representations of $\tilde{\mbox{L}}\mbox{G}$. The positive energy representations form an important class of (infinite dimensional) representations of $\tilde{\mbox{L}}\mbox{G}$ \cite{PS} that are indexed by an integer known as 'level'. If $\mbox{G}$ is simply connected, then the loop group $\tilde{\mbox{L}}\mbox{G}$ admits a Kac-Moody form known as the Affine group. We will show that the Dominant K-theory of the classifying space of proper actions of the corresponding Affine Kac-Moody group is simply the graded sum (under the action of the central circle) of the twisted equivariant K-theory groups. Hence, we recover the theorem of Freed-Hopkins-Teleman in the special case of the Loop group of a simply connected, simple, compact Lie group. 

\subsection{Organization of the Paper:}

\noindent

\noindent
We begin in Section 2 with background on Kac-Moody groups and describe the main definitions, constructions and results of this paper. Section 3 describes the finite type topological Tits building as a model for the classifying space for proper actions of a Kac-Moody group, and in Section 4 we study the properties of Dominant K-theory as an equivariant cohomology theory. In Sections 5 we compute the Dominant K-theory of the Tits building for a Kac-Moody group of compact type, and in section 6 we give our computation a geometric interpretation in terms of an equivariant family of cubic Dirac operators. Section 7 explores the Dominant K-theory of the building for the extended compact type. Section 8 is a discussion on various related topics, including the relationship of our work with the work of Freed-Hopkins-Teleman. Included also in this section are remarks concerning real forms of Kac-Moody groups, and the group $\mbox{E}_{10}$. Finally, in section 9, we define a corresponding Dominant K-homology theory, by introducing the equivariant dual of proper complexes. We also compute the Dominant K-homology of the building in the compact type. The Appendix is devoted to the construction of a compatible family of metrics on the Tits Building, which is required in section 6.

\subsection{Acknowledgements}

\noindent

\noindent
The author acknowledges his debt to the ideas described in \cite{FHT} and \cite{AS}. He would also like to thank P. E. Caprace, L. Carbone, John Greenlees and N. Wallach for helpful feedback. 

\section{Background and Statement of Results}

\noindent
Kac-Moody groups have been around for about thirty years \cite{K1,K2} (see \cite{Ku} for a modern prespective). One begins with a finite integral matrix $A = (a_{i,j})_{i,j \in I}$ with the properties that $a_{i,i} = 2$ and $a_{i,j} \leq 0$ for $i\neq j$. Moreover, we demand that $a_{i,j} = 0$ if and only if $a_{j,i} = 0$. These conditions define a generalized Cartan Matrix. A generalized Cartan matrix is said to be symmetrizable if it becomes symmetric after multiplication with a suitable rational diagonal matrix. 

\noindent
Given a generalized Cartan matrix $A$, one may construct a complex Lie algebra $\gG (A)$ using the Harishchandra-Serre relations. This Lie algebra contains a finite dimensional Cartan subalgebra $\gH$ that admits an integral form $\gH_{\Z}$ and a real form $\gH_{\R} = \gH_{\Z}\otimes \R$. The lattice $\gH_{\Z}$ contains a finite set of primitive elements $h_i, i \in I$ called "simple coroots".  Similarly, the dual lattice $\gH_{\Z}^*$ contains a special set of elements called "simple roots" $\alpha_i, i \in I$. One may decompose $\gG(A)$ under the adjoint action of $\gH$ to obtain a triangular form as in the classical theory of semisimple Lie algebras. Let $\eta_{\pm}$ denote the positive and negative "nilpotent" subalgebras respectively, and let $\gB_{\pm} = \gH \oplus \eta_{\pm}$ denote the corresponding "Borel" subalgebas. The structure theory for the highest weight representations of $\gG(A)$ leads to a construction (in much the same way that Chevalley groups are constructed), of a topological group $G(A)$ called the (minimal, split) Kac-Moody group over the complex numbers. The group $G(A)$ supports a canonical anti-linear involution $\omega$, and one defines the unitary form $K(A)$ as the fixed group $G(A)^{\omega}$. It is the group $K(A)$ that we study in this article. We refer the reader to \cite{Ku} for details on the subject. 

\noindent
Given a subset $J \subseteq I$, one may define a parabolic subalgebra $\gG_J(A) \subseteq \gG(A)$ generated by $\gB_+$ and the root spaces corresponding to the set $J$. For example, $\gG_{\varnothing} (A)= \gB_+$. One may exponentiate these subalgebras to parabolic subgroups $G_J(A) \subset G(A)$. We then define the unitary Levi factors $K_J(A)$ to be the groups $K(A) \cap G_J(A)$. Hence $K_{\varnothing} (A) = T$ is a torus of rank $2|I| - rk(A)$, called the maximal torus of $K(A)$. The normalizer $N(T)$ of $T$ in $K(A)$, is an extension of a discrete group $W(A)$ by $T$. The Weyl group $W(A)$ has the structure of a crystallographic Coxeter group generated by reflections $r_i, i \in I$. For $J \subseteq I$, let $W_J(A)$ denote the subgroup generated by the corresponding reflections $r_j, j \in J$. The group $W_J(A)$ is a crystallographic Coxeter group in its own right that can be identified with the Weyl group of $K_J(A)$. 

\noindent
Let Aut($A$) denote the automorphism group of $K(A)$ induced from automorphisms of $\gG(A)$. The outer automorphism group of $K(A)$ is essentially a finite dimensional torus, extended by a finite group of automorphisms of the Dynkin diagram of $A$ \cite{KW}. Given a torus $T' \subseteq \mbox{Aut}(A)$, the reader may verify that all constructions and theorems in this document have obvious analogs for groups of the form $T' \ltimes K(A)$. 

\noindent
We will identify the type of a Kac-Moody group $K(A)$, by that of its generalized Cartan matrix. For example, a generalized Cartan matrix $A$ is called of {\em Finite type} if the Lie algebra $\gG(A)$ is a finite dimensional semisimple Lie algebra. In this case, the groups $G(A)$ are the corresponding simply connected semisimple complex Lie groups. Another sub-class of groups $G(A)$ correspond to Cartan matrices that are of  {\em Affine Type}. These are non-finite type matrices $A$ which are diagonalizable with nonnegative real eigenvalues. The group of polynomial loops on a complex simply-connected semisimple Lie group can be seen as Kac-Moody group of Affine type. Kac-Moody groups that are not of finite type or of affine type, are said to be of {\em Indefinite type}. We will say that a generalized Cartan matrix $A$ is of {\em Compact Type}, if for every proper subset $J \subset I$, the sub matrix $(a_{i,j})_{i,j \in J}$ is of Finite type (see \cite{D3} (Section 6.9) for a classification). It is known that indecomposable generalized Cartan matrices of Affine type are automatically of Compact type. Finally, for the purposes of this paper, we introduce the {\em Extended Compact Type} defined as a generalized Cartan matrix $A = (a_{i,j})_{i,j \in I}$, for which there is a (unique) decompositon $I = I_0 \coprod J_0$, with the property that the sub Cartan matrix $(a_{i,j})_{i,j \in J}$ is of non-finite type if and only if $I_0 \subseteq J$.

\noindent
{\em To avoid redundancy, we will prove our results under the assumption that the Kac-Moody group under consideration is not of finite type.}

\noindent
Given a generalized Cartan matrix $A = (a_{i,j})_{i,j \in I}$, define a category $\gC(A)$ to be the poset category (under inclusion) of subsets $J \subseteq I$ such that $K_J(A)$ is a compact Lie group. This is equivalent to demanding that $W_J(A)$ is a finite group. Notice that $\gC(A)$ contains all subsets of $I$ of cardinality less than two. In particular, $\gC(A)$ is nonempty and has an initial object given by the empty set. However, $\gC(A)$ does not have a terminal object since $A$ is assumed to not be of finite type. The category $\gC(A)$ is also known as the poset of spherical subsets \cite{D4}. 

\begin{remark}
The topology on the group $K(A)$ is the strong topology generated by the compact subgroups $K_J(A)$ for $J \in \gC(A)$ \cite{K2,Ku}. More precisely, $K(A)$ is the amalgamated product of the compact Lie groups $K_J(A)$, in the category of topological groups. For a arbitrary subset $L \subseteq I$, the topology induced on homogeneous space of the form $K(A)/K_L(A)$ makes it into a CW-complex, with only even cells, indexed by the set of cosets $W(A)/W_L(A)$. 
\end{remark}

\noindent
We now introduce the topological Tits building, which is a space on which most of our constructions rest. Assume that the set $I$ has cardinality $n+1$, and let us fix an ordering of the elements of $I$. Notice that the geometric $n$ simplex $\Delta(n)$ has faces that can be canonically identified with proper subsets of $I$. Hence, the faces of codimension $k$ correspond to subset of cardinality $k$. Let $\Delta_J(n)$ be the face of $\Delta(n)$ corresponding to the subset $J$, and let $\mathcal{B}\Delta(n)$ denote the Barycentric subdivision of $\Delta(n)$. It follows that the faces of dimension $k$ in $\mathcal{B}\Delta(n)$ are indexed on chains of length $k$ consisting of proper inclusions $\varnothing \subseteq J_1 \subset J_2 \subset \ldots J_k \subset I$.  Let $|\gC(A)|$ denote the subcomplex of $\mathcal{B}\Delta(n)$ consisting of those faces for which the corresponding chain is contained entirely in $\gC(A)$. Henceforth, we identify $|\gC(A)|$ as a subspace of $\Delta(n)$. The terminology is suggestive of the fact that $|\gC(A)|$ is canonically homeomorphic to the geometric realization of the nerve of the category $\gC(A)$.

\begin{defn} Define the (finite-type) Topological Tits building $X(A)$ as the $K(A)$-space:
\[ X(A) = \frac{K(A)/T \times |\gC(A)|}{\sim},   \]
where we identify $(gT,x)$ with $(hT,y)$ iff $x=y \in \Delta_J(n)$, and $g=h \, \mbox{mod} \, K_J(A)$. 
\end{defn}
\begin{remark}
An alternate definition of $X(A)$ is as the homotopy colimit \cite{BK}:
\[ X(A) = {\rm hocolim}_{J \in \gC(A)} \; F(J),  \]
where $F$ is the functor from $\gC(A)$ to $K(A)$-spaces, such that $F(J) = K(A)/K_J(A)$. 
\end{remark}
\noindent 
Notice that by construction, $X(A)$ is a $K(A)$-CW complex such that all the isotropy subgroups are compact Lie groups. In fact, we prove the following theorem in Section 3.

\begin{thm} The space $X(A)$ is equivalent to the classifying space $EK(A)$ for proper $K(A)$-actions. 
\end{thm}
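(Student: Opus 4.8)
\medskip
\noindent\emph{Proof plan.}\ The plan is to verify the characterization of the classifying space for proper actions recalled above: $EK(A)$ is a $K(A)$-CW complex with compact isotropy groups whose $H$-fixed point set is weakly contractible for every compact subgroup $H\subseteq K(A)$, a property that pins the space down up to $K(A)$-equivariant homotopy \cite{LM,tD}. The paragraph preceding the statement already records that $X(A)$ is such a $K(A)$-CW complex with compact Lie isotropy, so the entire content is the contractibility of fixed points. The first step is bookkeeping: a point $[gT,x]\in X(A)$, where $x$ has ``type'' $J(x)=I\setminus\operatorname{supp}(x)\in\gC(A)$ (the smallest subset occurring in the barycentric simplex whose interior contains $x$), has isotropy group $gK_{J(x)}(A)g^{-1}$, so that
\[
X(A)^{H}=\bigl\{\,[gT,x]\ :\ g^{-1}Hg\subseteq K_{J(x)}(A)\,\bigr\}.
\]

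\noindent
To prove this space is contractible I would argue geometrically. First, identify $X(A)$ with the Davis--Moussong realization of the building of $K(A)$: the Davis chamber of the Coxeter system $(W(A),\{r_i\})$ is by definition $|\gC(A)|$, the chamber set of the building is $K(A)/T$, and the relation $(gT,x)\sim(hT,x)$ when $g\equiv h\bmod K_{J(x)}(A)$ is exactly the identification of chambers sharing the residue determined by $x$; thus the homotopy colimit description of the Remark is just the standard presentation of this realization. Second, put the piecewise-Euclidean Moussong metric on $X(A)$, in which every chamber is an isometric copy of the metrized Davis chamber and $K(A)$ acts by type-preserving cellular isometries; by the Moussong link condition together with simple connectivity of the building, this metric is complete and CAT(0) (Davis's theorem that Davis realizations of buildings are CAT(0), cf.\ \cite{D4}). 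Third, a compact subgroup $H$ then acts on a complete CAT(0) space by isometries with bounded orbits, so the Bruhat--Tits fixed point theorem shows $X(A)^{H}$ is non-empty, closed and convex, hence geodesically contractible. As a by-product, non-emptiness of $X(A)^{H}$ for every compact $H$ is exactly the structural statement that every compact subgroup of $K(A)$ is subconjugate to $K_{J}(A)$ for some $J\in\gC(A)$, so that fact is recovered rather than assumed. (A purely combinatorial route avoids the metric: the homotopy colimit formula gives $X(A)^{H}\simeq\hocolim_{J\in\gC(A)_{H}}(K(A)/K_{J}(A))^{H}$ with $\gC(A)_{H}=\{J\in\gC(A): H\text{ is subconjugate to }K_{J}(A)\}$ an up-closed subposet; when $A$ is of finite type this poset has a maximum and the colimit collapses to $(K(A)/K(A))^{H}=\mathrm{pt}$, and in general one filters $\gC(A)_H$ by its maximal simplices and applies a Solomon--Tits-type argument at each stage, the underlying input being the contractibility of the Davis complex of $W(A)$, which is precisely $X(A)^{T}$.)

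\noindent
The main obstacle, in either approach, is that this building has continuously --- indeed infinite-dimensionally --- many chambers, the chamber set $K(A)/T$ being a Kac--Moody flag variety rather than a discrete set. In the geometric approach one must check that the Davis--Moussong theory, usually developed for buildings with a discrete chamber set, survives topologizing the chambers by the Schubert cell structure: that the $K(A)/T$-parametrized union of Davis chambers is genuinely a complete CAT(0) space on which $K(A)$ acts by isometries (this is in the spirit of the compatible family of metrics on $X(A)$ constructed in the Appendix). In the combinatorial approach the same difficulty reappears as the fact that the homotopy colimit is not contractible term by term --- each $K(A)/K_{J}(A)$ is a nontrivial flag variety --- so the induction must genuinely exploit the absence of a maximum in $\gC(A)$ while tracking the bundle structure over these infinite-dimensional fibers. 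Everything else is formal.
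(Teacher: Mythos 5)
Your overall framing is right --- reduce to weak contractibility of $X(A)^{H}$ for compact $H$ --- but the core of your argument has a gap exactly where you flag it, and it is not a gap that ``everything else is formal'' can absorb. The Davis--Moussong/CAT(0) theorem of \cite{D4} and the Bruhat--Tits fixed point argument live in the \emph{metric} realization of the building, whose chamber set is treated as discrete. Here the chamber set is the flag variety $K(A)/T$ with its Schubert-cell topology, and the quotient topology on $X(A)=\bigl(K(A)/T\times|\gC(A)|\bigr)/\!\sim$ (the topology in which $X(A)$ is a proper $K(A)$-CW complex, and the one relevant to the classifying-space property) is strictly coarser than the metric topology on the same underlying set. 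Convexity of the fixed set in the CAT(0) metric gives a geodesic contraction that is continuous for the metric topology, but you have no control over whether that contraction is continuous for the CW topology, nor whether a sphere mapped into $X(A)^{H}$ (which lands in a finite subcomplex) is contracted within anything compact in the CW sense. So contractibility of the metric fixed set does not, as stated, yield weak contractibility of $X(A)^{H}\subseteq X(A)$. Your parenthetical combinatorial alternative has the same unresolved issue in a different guise: you acknowledge that the terms $K(A)/K_J(A)$ are nontrivial flag varieties but do not say how the ``Solomon--Tits-type argument'' actually handles them.

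The paper's proof resolves precisely this point, and does so by a different mechanism: it first quotes from building theory that every compact subgroup is conjugate into some $K_J(A)$ (a fact you hoped to recover as a by-product, so you cannot also lean on it), and then proves the stronger statement that $X(A)$ is $K_J(A)$-equivariantly contractible. This is done by filtering $G(A)/B$ by the Bruhat length of minimal representatives $w\in{}^{J}W$, so that each subquotient is
\[
X_k(A)/X_{k-1}(A)=\bigvee_{l(w)=k}\bigl(G_J(A)\tilde{w}B/B\bigr)_{+}\wedge\bigl(|\gC(A)|/|\gC_w(A)|\bigr),
\]
and the retraction of $X_k(A)$ onto $X_{k-1}(A)$ reduces to contractibility of $|\gC_w(A)|$, which holds because the poset $\gC_w(A)=\{J\in\gC(A):J\cap I_w\neq\varnothing\}$ deformation retracts to the subposet of subsets of $I_w$, which has $I_w$ as terminal object (with $I_w\in\gC(A)$ by the standard length argument). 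This works entirely within the CW topology and produces genuine equivariant deformation retractions. If you want to salvage your route, you would need to either prove a CAT(0)-type statement adapted to the topological building, or switch to an argument of the paper's type; as written, the key step is asserted rather than established.
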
 
\noindent
We therefore have equivariant maps $\iota_J : K(A)/K_J(A) \rightarrow X(A)$ for any $J \in \gC(A)$. Note that by the above theorem, these maps are unique up to equivariant homotopy. 

\medskip
\noindent
The Weyl chamber C, and the space Y, are defined as subspaces of $\gH_{\R}^*$: 
\[ \mbox{C} = \{ \lambda \in \gH_{\R}^*\, | \, \lambda(h_i)\geq 0, \, i \in I \}, \quad \quad \mbox{Y} = W(A)\, \mbox{C}. \]
Define $\mbox{C}_J$ as the face of $\mbox{C}$ given by:
\[ \mbox{C}_J = \mbox{Interior} \, \{ \lambda \in \mbox{C} | \, \lambda(h_j)=0, \forall j \in J \}. \]
The space Y has the structure of a cone. Indeed it is called the Tits cone. The Weyl chamber is the fundamental domain for the $W(A)$-action on the Tits cone. Moreover, the stabilizer of any point in $\mbox{C}_J$ is $W_J(A)$.

\medskip
\noindent
The subset of $\gH_{\Z}^*$ contained in C is called the set of dominant weights D, and those in the interior of C are called the regular dominant weights $\mbox{D}_+$:
\[ \mbox{D} = \{ \lambda \in \gH_{\Z}^* \; | \; \lambda(h_i) \geq 0, \; i \in I \} \quad \quad  \mbox{D}_+ = \{ \lambda \in \gH_{\Z}^* \; | \; \lambda(h_i) > 0, \; i \in I \}. \]
\noindent
As is the case classically, there is a bijective correspondance between the set of irreducible highest weight representations, and the set D. This bijection identifies an irreducible representation $\mbox{L}_{\mu}$ of hightest weight $\mu$, with the element $\mu \in \mbox{D}$. It is more convenient for our purposes to introduce a {\em Weyl element} $\rho \in \gH_{\Z}^*$ with the property that $\rho(h_i)=1$ for all $i\in I$. The Weyl element may not be unique, but we fix a choice. We may then index the set of irreducible highest weight representations on $\mbox{D}_+$ by identifying an irreducible representation $\mbox{L}_{\mu}$ of hightest weight $\mu$, with the element $\mu + \rho \in \mbox{D}_+$. The relevance of the element $\rho$ will be made clear in Section 5. 

\noindent
For the sake of simplicity, assume that $A$ is a symmetrizable generalized Cartan matrix. In this case, any irreducible representation $\mbox{L}_{\mu}$ admits a $K(A)$-invariant Hermitian inner product. Let us now define a dominant representation of $K(A)$ in a Hilbert space:
\begin{defn}
We say that a $K(A)$-representation in a seperable Hilbert space is dominant if it decomposes as a sum of irreducible highest weight representations. In particular, we obtain a maximal dominant $K(A)$-representation $\mathcal{H}$ (in the sense that any other dominant representation is a summand) by taking a completed sum of (countably many) copies of all irreducible highest weight representations. \end{defn}

\medskip
\noindent
{ \bf The Dominant K-theory spectrum}:

\noindent

\noindent
We are now ready to define Dominant K-theory as a cohomology theory on the category of proper $K(A)$-CW complexes. As is standard, we will do so by defining a representing object for Dominant K-theory. Recall that usual 2-periodic K-theory is represented by homotopy classes of maps into the infinite grassmannian $\Z \times \mbox{BU}$ in even parity, and into the infinite unitary group $\mbox{U}$ in odd parity. The theorem of Bott periodicity relates these spaces via $\Omega \mbox{U} = \Z \times \mbox{BU}$, ensuring that this defines a cohomology theory. 

\noindent
This structure described above can be formalized using the notion of a spectrum. In particular, a spectrum consists of a family of pointed spaces $\mathbb{E}_n$ indexed over the integers, endowed with homeomorphisms $\mathbb{E}_{n-1} \rightarrow \Omega \mathbb{E}_n$. The objects that represent an equivariant cohomology theory are known as equivariant spectra. In analogy to a spectrum, an equivariant spectrum $\mathbb{E}$ consists of a collection of pointed $K(A)$-spaces $\mathbb{E}(V)$, indexed on finite dimensional sub-representations $V$ of an infinite dimensional unitary representation of $K(A)$ in a separable Hilbert space (known as a "universe"). In addition, these spaces are related on taking suitable loop spaces. For a comprehensive reference on equivariant spectra, see  \cite{LMS}. 

\noindent
Henceforth, all our $K(A)$-equivariant spectra are to be understood as naive equivariant spectra, i.e. indexed on a trivial $K(A)$-universe. Indeed, there are no interesting finite dimensional representations of $K(A)$, and so it is even unclear what a nontrivial universe would mean. 

\medskip
\noindent
Let $\Kk$ denote the $K(A)$-equivariant periodic K-theory spectrum represented by a suitable model $\mathcal{F}(\mathcal{H})$, for the space of Fredholm operators on $\mathcal{H}$ \cite{AS,S}. The space $\mathcal{F}(\mathcal{H})$ is chosen so that the projective unitary group $\mathbb{PU}(\mathcal{H})$ (with the compact open topology) acts continuously on $\mathcal{F}(\mathcal{H})$ \cite{AS} (Prop 3.1). By maximality, notice that we have an equivalence: $\mathcal{H}\otimes \mathcal{H} = \mathcal{H}$. Hence $\Kk$ is naturally an equivariant ring spectrum.

\medskip
\noindent
For a proper $K(A)$-CW spectrum $Y$, we define Dominant K-cohomology as:
\begin{defn} 
The Dominant K-cohomology groups of a proper $K(A)$-CW spectrum $Y$ are defined as the group of equivariant homotopy classes of stable maps:
\[ \Kth_{K(A)}^{k}(Y) = [Y, \Sigma^k \Kk]_{K(A)}.  \]

\noindent
Given a closed subgroup $G \subseteq K(A)$, we introduce the notation ${}^A \Kth^*_{G}(X)$, when necessary, to mean the restriction of Dominant K-theory from $K(A)$-spectra to $G$-spectra. Hence
\[ {}^A \Kth^k_G(X) = \Kth_{K(A)}^k(K(A)_+ \wedge_{G} X) = [X, \Sigma^k \Kk]_G.  \]
\end{defn}
\begin{remark} The reader interested only in spaces may replace the definition of Dominant K-theory as follows: Given a proper $K(A)$-CW complex $Y$, Dominant K-theory can be defined as the group of equivariant homotopy classes of maps:
\[ \Kth_{K(A)}^{2k}(Y) = [Y, \mathcal{F}(\mathcal{H})]_{K(A)}, \quad \mbox{and} \quad \Kth_{K(A)}^{2k+1}(Y) = [Y, \Omega \mathcal{F}(\mathcal{H})]_{K(A)}.  \]
\end{remark}
\noindent
In section 5 we plan to prove the following theorem:
\begin{thm} 
Assume that the generalized Cartan matrix $A$ has size $n+1$, and that $K(A)$ is of compact type, but not of finite type. Let $T \subset K(A)$ be the maximal torus, and let $\mbox{R}_T^{\varnothing}$ denote the ideal within the character ring of $T$ generated by the regular dominant weights, $\mbox{D}_+$. 

\noindent
For the space $X(A)$, define the reduced Dominant K-theory $\tilde{\Kth}_{K(A)}^*(X(A))$, to be the kernel of the restriction map along any $T$-orbit in $X(A)$. Then we have an isomorphism of graded groups:
\[ \tilde{\Kth}_{K(A)}^*(X(A)) = \mbox{R}_T^{\varnothing}[\beta^{\pm1}],  \]
where $\beta$ is the Bott class in degree 2, and $\mbox{R}_T^{\varnothing}$ is graded so as to belong entirely in degree $n$. 
\end{thm}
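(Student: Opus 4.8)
The plan is to compute $\Kth_{K(A)}^*(X(A))$ by exploiting the homotopy colimit description $X(A) = \hocolim_{J \in \gC(A)} K(A)/K_J(A)$ together with the fact that $\Kth_{K(A)}^*$ is an equivariant cohomology theory (established in Section 4). This gives a Bousfield–Kan spectral sequence whose $E_1$-term is built from the groups $\Kth_{K(A)}^*(K(A)/K_J(A)) = \Kth_{K_J(A)}^*(\mathrm{pt})$, which for each $J \in \gC(A)$ (so $K_J(A)$ is a compact Lie group) is the representation ring $R(K_J(A))$ tensored with $\Z[\beta^{\pm 1}]$. The key point in identifying these terms is that the restriction of the maximal dominant representation $\mathcal{H}$ to the compact subgroup $K_J(A)$ is a complete universe for $K_J(A)$: one must check that every irreducible representation of $K_J(A)$ appears, using that an irreducible highest weight representation $\mbox{L}_\mu$ of $K(A)$, restricted to $K_J(A)$, contains (as a summand) every irreducible of $K_J(A)$ as $\mu$ ranges over $\mbox{D}$ (for instance by looking at weight multiplicities, or using that $K_J(A)$ acts with all its weights on suitable $\mbox{L}_\mu$). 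Thus Dominant K-theory restricted along $K_J(A) \hookrightarrow K(A)$ really is ordinary equivariant K-theory of $K_J(A)$, and the coefficient systems are computed.

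Next I would analyze the resulting cochain complex over the poset $\gC(A)$. Because $A$ is of \emph{compact} type but not finite type, $\gC(A)$ consists of \emph{all} proper subsets of $I$: that is, $\gC(A) = \{ J \subsetneq I \}$, so its nerve $|\gC(A)|$ is the barycentric subdivision of $\partial \Delta(n)$, a sphere $S^{n-1}$. Correspondingly the $E_1$-complex computing $\tilde{\Kth}_{K(A)}^*(X(A))$ is (after splitting off the contribution of a single orbit $K(A)/T$, which accounts for the "reduced" part) the reduced simplicial cochain complex of $S^{n-1}$ with a coefficient system $J \mapsto R(K_J(A))$, twisted by the restriction maps. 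Concretely, I would set up the augmented complex
\[
0 \to R_T^{\varnothing} \to \prod_{|J| = n-1} R(K_J(A)) \to \cdots \to \prod_{|J| = 1} R(K_J(A)) \to R(T) \to 0,
\]
built from the restriction homomorphisms $R(K_J(A)) \to R(K_{J'}(A))$ for $J' \subset J$, and show that its only nontrivial cohomology is $R_T^{\varnothing}$ sitting in the top spot (cohomological degree $n$, matching the claimed grading). Restricting to the regular dominant part is exactly the device that lets one kill all the lower cohomology: the Weyl–Kac character formula expresses a regular dominant character of $T$ as an alternating sum over $W_J(A)$ of characters pushed down from the Levi factors, and this alternating-sum structure is precisely what identifies the kernel/cokernel along the complex. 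I would argue that the "Euler characteristic" map $R(K_J(A)) \to R(T)$ given by the Weyl character of $K_J(A)$ matches the coboundary, so that exactness in the middle degrees reduces to the (standard) acyclicity of the Koszul-type complex attached to the Coxeter complex of the finite group $W_J(A)$, and the surviving top cohomology is generated by the classes of the irreducible highest weight representations $\mbox{L}_\mu$, $\mu + \rho \in \mbox{D}_+$, which biject with a basis of $R_T^{\varnothing}$.

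The final step is to promote this $E_1$-computation to the stated answer: since the spectral sequence is concentrated (after the identification above) in a single "column" of internal degree and the coefficients are free abelian, there is no room for differentials or extension problems, so $E_1 = E_\infty$ and $\tilde{\Kth}_{K(A)}^*(X(A)) \cong R_T^{\varnothing}[\beta^{\pm 1}]$ with $R_T^{\varnothing}$ in degree $n$, $\beta$ in degree $2$. I expect the main obstacle to be the homological algebra of the middle step — showing that the coefficient-system cochain complex over the sphere $|\gC(A)| = S^{n-1}$ is acyclic except at the top once one passes to regular dominant characters. This is where the special role of $\rho$ (shifting $\mbox{D}$ to $\mbox{D}_+$) and the precise bookkeeping with the Weyl group denominators enter; making it rigorous amounts to recognizing the relevant complex as an exact Weyl-group-equivariant complex (a "BGG-type" resolution) and tracking signs carefully. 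Secondary care is needed to verify that the restriction of $\mathcal{H}$ to each $K_J(A)$ is genuinely a complete $K_J(A)$-universe, so that the comparison with Atiyah–Segal equivariant K-theory of the compact Levi factors is legitimate, and to confirm that the "reduced" theory really does split off the single orbit contribution functorially in the poset.
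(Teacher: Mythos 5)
Your overall strategy (the Bousfield--Kan spectral sequence for the homotopy colimit $X(A)=\hocolim_{J\in\gC(A)}K(A)/K_J(A)$, collapse at the algebraic page, identification of the surviving term with $\mbox{R}_T^{\varnothing}$ in degree $n$) is the paper's strategy, but your identification of the coefficients is wrong, and this is not a repairable detail --- it is the point of the whole construction. You assert that the restriction of $\mathcal{H}$ to $K_J(A)$ is a complete $K_J(A)$-universe, so that $\Kth^*_{K(A)}(K(A)/K_J(A))=\mbox{R}(K_J(A))[\beta^{\pm 1}]$. This is false: the characters of $T$ occurring in $\mathcal{H}$ are exactly the weights lying in the Tits cone $\mbox{Y}$, and an irreducible $K_J(A)$-representation $\mbox{L}_\mu$ occurs in $\mathcal{H}$ if and only if its ($J$-dominant) highest weight $\mu$ is dominant for all of $K(A)$ (this is Lemma 4.4 of the paper, proved via the Weyl character formula and the $W(A)$-invariance of the weights of $\mathcal{H}$). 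Already for $J=\varnothing$ one has $\Kth^0_{K(A)}(K(A)/T)=\mbox{DR}_T\subsetneq \mbox{R}_T$; in the affine case, for instance, all negative-level characters of $T$ are missing. So the correct coefficient system is $J\mapsto \mbox{DR}_{K_J(A)}\cong \mbox{DR}_T^{W_J(A)}$, the \emph{dominant} representation rings, and the decomposition $\mbox{DR}_T=\bigoplus_K \Z[W(A)/W_K(A)]\otimes\mbox{R}_T^K$ over the faces of the Weyl chamber --- which is what ultimately isolates $\mbox{R}_T^{\varnothing}$ --- has no analogue for the full character ring $\mbox{R}_T$, whose weights outside the Tits cone have infinite $W(A)$-orbits with the wrong stabilizers. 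With your coefficients both the $E_1$-page and the answer would come out differently.

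Two further points. First, for compact type $\gC(A)$ is the poset of \emph{all} proper subsets of $I$ including $\varnothing$, so $|\gC(A)|$ is the barycentric subdivision of the closed $n$-simplex (a disk), not of $\partial\Delta(n)\simeq S^{n-1}$; the contractibility of the nerve is consistent with the answer only because the coefficient system is highly nonconstant. Second, the acyclicity you hope to extract from a ``Koszul/BGG-type complex on a sphere'' is actually supplied in the paper by a quite different mechanism: $\varprojlim^p\mbox{DR}_\bullet=\mbox{H}^p_{W(A)}(\Sigma,\mbox{DR}_T)$, where $\Sigma$ is the (infinite, noncompact) Davis complex, and one invokes Davis's theorems that $\mbox{H}^*_c(\Sigma,\Z)$ is $\Z$ concentrated in degree $n$, together with the fact that every reflection acts by $-1$ on $\mbox{H}^n_c(\Sigma,\Z)$, which forces $\mbox{H}^*_c(\Sigma/W_K(A),\Z)=0$ for all $K$ with $\varnothing\neq K\neq I$. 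That sign argument is what kills the intermediate summands; nothing in your sketch produces it. If you replace your coefficient identification by the dominant representation rings and your ``sphere'' by the equivariant cohomology of the Davis complex with the orbit decomposition of $\mbox{DR}_T$, you recover the paper's proof.
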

\begin{remark}
Note that we may identify $\mbox{R}_T^{\varnothing}$ with the Grothendieck group of irreducible highest weight representations of $K(A)$, as described earlier.
Under this identification, we will goemetrically interpret the above theorem using an equivariant family of  cubic Dirac operators.
\end{remark}
 
\noindent
In Section 6, we show that:

\begin{thm} Let $K(A)$ denote a Kac-Moody group of extended compact type, where $|I_0| = n+1$ for some integer $n>1$. Let $A_0 = (a_{i,j})_{i,j \in I_0}$ denote the sub Cartan matrix of compact type. Then the action of $K(A_0)$ on $X(A_0)$ extends to $K_{I_0}(A)$, and the following restriction map is an injection:
 \[  \tilde{\Kth}^*_{K(A)}(X(A)) \longrightarrow {}^A \tilde{\Kth}^*_{K_{I_0}(A)}(X(A_0)),  \]
 where reduced Dominant K-theory denotes the kernel along any $T$-orbit. Moreover, the maximal dominant $K(A)$-representation restricts to a dominant $K_{I_0}(A)$-representation, inducing:
 \[ \mbox{St}: \,  {}^A\tilde{\Kth}^*_{K_{I_0}(A)}(X(A_0)) \longrightarrow \tilde{\Kth}^*_{K_{I_0}(A)}(X(A_0)).  \]  The map $\mbox{St}$ is injective, and has image given by the ideal generated by characters that correspond to weights in the interior of the Weyl chamber of $K_{I_0}(A)$, which are in the $W(A)$-orbit of dominant characters of $K(A)$. 
 
 \noindent
 The image of $\tilde{\Kth}^*_{K(A)}(X(A))$ inside $\tilde{\Kth}^*_{K_{I_0}(A)}(X(A_0))$ may be identified with the characters described above which correspond to weights that also belong to the antidominant Weyl chamber of $K_{J_0}(A)$. 
 \end{thm}
\section{Structure of the space $X(A)$}
\noindent
Before we begin a detailed study of the space $X(A)$ that will allow us prove that $X(A)$ is equivalent to the classifying space of proper $K(A)$- actions, let us review the structure of the Coxeter group $W(A)$. Details can be found in \cite{H}. The group $W(A)$ is defined as follows:
\[ W(A) = \langle r_i, \, i \in I \, | \, (r_i r_j)^{m_{i,j}} = 1 \rangle,  \]
where the integers $m_{i,j}$ depend on the product of the entries $a_{i,j}a_{j,i}$ in the generalized Cartan matrix $A = (a_{i,j})$. In particular, $m_{i,i} = 1$. The word length with respect to the generators $r_i$ defines a length function $l(w)$ on $W(A)$. Moreover, we may define a partial order on $W(A)$ known as the Bruhat order. Under the Bruhat order, we say $v \leq w$ if $v$ may be obtained from some (in fact any!) reduced expression for $w$ by deleting some generators. 

\noindent
Given any two subsets $J,K \subseteq I$, let $W_J(A)$ and $W_K(A)$ denote the subgroups generated by the elements $r_j$ for $j \in J$ (resp. $K$). Let $w \in W(A)$ be an arbitrary element. Consider the double coset $W_J(A)\, w \, W_K(A) \subseteq W(A)$. This double coset contains a unique element $w_0$ of minimal length. Moreover, any other element $u \in W_J(A)\, w \, W_K(A)$ can be written in reduced form: $u = \alpha \, w_0 \, \beta$, where $\alpha \in W_J(A)$ and $\beta \in W_K(A)$. It should be pointed out that the expression $\alpha \, w_0 \, \beta$ above, is not unique. We have $\alpha \, w_0 = w_0 \, \beta$ if and only if $ \alpha \in W_L(A)$, where $W_L(A) = W_J(A)\, \cap \, w_0 \, W_K(A) \, w_0^{-1}$ (any such intersection can be shown to be generated by a subset $L \subseteq J$). 

\noindent
We denote the set of minimal $W_J(A)$-$W_K(A)$ double coset representatives by $^JW^K$. If $K = \varnothing$, then we denote the set of minimal left $W_J(A)$-coset representatives by $^J W$. The following claim follows from the above discussion:
\begin{claim} \label{minrep} Let $J,K$ be any subsets of $I$. Let $w$ be any element of $^JW$. Then $w$ belongs to the set $^JW^{K}$ if and only if $l(w\, r_k) > l(w)$ for all $k \in K$. 
\end{claim}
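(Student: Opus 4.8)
The plan is to prove Claim \ref{minrep} directly from the structure theory of minimal double coset representatives recalled in the preceding paragraph. Recall that we are given $w \in {}^JW$, i.e. $w$ is the minimal length element in its coset $W_J(A)\, w$, and we must show that $w \in {}^JW^K$ if and only if $l(w\, r_k) > l(w)$ for every $k \in K$.

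For the forward direction, suppose $w \in {}^JW^K$, so $w$ is the unique minimal length element of the double coset $W_J(A)\, w\, W_K(A)$. For any $k \in K$, the element $w\, r_k$ also lies in this double coset, hence $l(w\, r_k) \geq l(w)$; since $l(w\, r_k) \neq l(w)$ (lengths of $w$ and $w r_k$ always differ by an odd number, in particular they are never equal), we get $l(w\, r_k) > l(w)$, as desired. This direction is essentially immediate.

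For the converse, suppose $w \in {}^JW$ and $l(w\, r_k) > l(w)$ for all $k \in K$; I want to conclude $w$ is the minimal length element of $W_J(A)\, w\, W_K(A)$, which would give $w \in {}^JW^K$. Let $w_0$ denote the genuine minimal length representative of $W_J(A)\, w\, W_K(A)$, so $w = \alpha\, w_0\, \beta$ in reduced form with $\alpha \in W_J(A)$, $\beta \in W_K(A)$. Since $w \in {}^JW$ is minimal in $W_J(A)\, w = W_J(A)\, w_0\, \beta$, and $w_0 \beta$ lies in this same left coset with $l(w_0 \beta) \le l(w_0) + l(\beta) = l(w) - l(\alpha) + \ldots$ — here I would argue more carefully: from $w = \alpha w_0 \beta$ reduced we have $l(w) = l(\alpha) + l(w_0) + l(\beta)$, and $w_0\beta \in W_J(A)w$ forces $l(w_0\beta) \ge l(w)$ by minimality of $w$ in its left $W_J$-coset, hence $l(\alpha) = 0$, i.e. $\alpha = 1$ and $w = w_0\beta$ with $l(w) = l(w_0) + l(\beta)$. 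Now I use the hypothesis $l(w r_k) > l(w)$ for all $k \in K$ to force $\beta = 1$: if $\beta \neq 1$, pick a reduced expression for $\beta$ ending in some generator $r_k$, $k \in K$; then $w r_k = w_0 \beta r_k$ has $l(w_0 \beta r_k) = l(w_0) + l(\beta) - 1 = l(w) - 1 < l(w)$, contradicting the hypothesis. Hence $\beta = 1$ and $w = w_0 \in {}^JW^K$.

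The only subtle point — and the step I would treat as the main obstacle, though it is standard Coxeter combinatorics — is the exchange/deletion argument showing that if $\beta \neq 1$ then a reduced word for $\beta$ can be chosen ending in a generator $r_k$ with $k \in K$, and that right-multiplying the reduced factorization $w = w_0 \beta$ by that $r_k$ genuinely decreases length (i.e. that $l(w_0\beta) = l(w_0) + l(\beta)$ is compatible with peeling off the last letter of $\beta$). This follows from the basic fact that in a Coxeter group, for $w_0$ minimal in $w_0 W_K(A)$ one has $l(w_0 v) = l(w_0) + l(v)$ for all $v \in W_K(A)$, together with the existence of a reduced expression for any $v \in W_K(A)$ within the generators $\{r_k : k \in K\}$; all of this is in the reference \cite{H} cited at the start of Section 3. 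No Kac-Moody-specific input is needed beyond the Coxeter structure of $W(A)$.
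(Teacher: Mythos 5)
Your proof is correct and follows the same route the paper intends: the paper gives no written proof, merely asserting that the claim "follows from the above discussion" of the reduced factorization $u = \alpha\, w_0\, \beta$ of elements of a double coset, which is exactly the decomposition you exploit. Your treatment of the converse (forcing $\alpha = 1$ via minimality in the left coset, then $\beta = 1$ via the hypothesis and the standard fact $l(w_0 v) = l(w_0) + l(v)$ for $v \in W_K(A)$) fills in the details soundly.
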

\noindent
Let us now recall the generalized Bruhat decomposition for the split Kac-Moody group $G(A)$. Given any two subsets $J,K \subseteq I$. Let $G_J(A)$ and $G_K(A)$ denote the corresponding parabolic subgroups. Then the group $G(A)$ admits a Bruhat decomposition into double cosets:
\[ G(A) = \coprod_{w \in ^JW^K} G_J(A)\, \tilde{w} \, G_K(A),  \]
where $\tilde{w}$ denotes any element of $N(T)$ lifting $w \in W(A)$. The closure of a double coset $G_J(A) \, \tilde{w} \, G_K(A)$ is given by:
\[ \overline{ G_J(A) \, \tilde{w} \, G_K(A)} = \coprod_{v \in ^JW^K, \, v\leq w} G_J(A)\, \tilde{v} \, G_K(A).  \]
One may decompose the subspace $G_J(A) \, \tilde{w} \, G_K(A)$ further as
\[ G_J(A) \, \tilde{w} \, G_K(A) = \coprod_{v \in W_J(A) \, w \, W_K(A)} B\, \tilde{v} \, B,  \]
where $B = G_{\varnothing}(A)$ denotes the positive Borel subgroup. The subspace $B\, \tilde{v} \, B$ has the structure of the right $B$ space $\C^{l(v)} \times B$ and one has an isomorphism:
\[ B \, \tilde{v} \, B = \prod_{i = 1}^s B\, \tilde{r}_s \, B,  \]
where $v = r_1\ldots r_s$ is a reduced decomposition. The above structure in fact gives the homogeneous space $G(A)/G_K(A)$ the structure of a CW complex, and as such it is homeomorphic to the corresponding homogeneous space for the unitary forms: $K(A)/K_K(A)$

\medskip
\noindent
Now recall the space $X(A)$ defined in the previous section:
\[ X(A) = \frac{K(A)/T \times |\gC(A)|}{\sim},   \]
where we identify $(gT,x)$ with $(hT,y)$ iff $x=y \in \Delta_J(n)$, and $g=h \, \mbox{mod} \, K_J(A)$. The space $|\gC(A)|$ was the subcomplex of the barycentric subdivision of the $n$-simplex consisting of those faces indexed on chains $\varnothing \subseteq J_0 \subset J_1 \ldots \subset J_m \subseteq I$ for which $J_k \in \gC(A)$ for all $k$. 

\noindent
By a theorem of Kac-Peterson \cite{K3}, any compact subgroup of $K(A)$ is conjugate to a subgroup of $K_J(A)$ for some $J \in \gC(A)$. Hence, to show equivariant contractibility of $X(A)$ with respect to any compact subgroup of $K(A)$, it is sufficient to show that $X(A)$ is $K_J(A)$-equivariantly contractible for any $J \in \gC(A)$. And so we require:

\begin{thm} \label{Tits}
Let $J \in \gC(A)$. Then the space $X(A)$ is $K_J(A)$-equivariantly contracible. 
\end{thm}
\begin{proof}
 Define a $K_J(A)$ equivariant filtration of $G(A)/B$ by finite complexes:
\[ (G(A)/B)_k= \coprod_{w \in ^JW, \, l(w) \leq k} G_J(A)\, \tilde{w} \, B/B , \quad k \geq 0. \]
Identifying $K(A)/T$ with $G(A)/B$, we get a filtration of $X(A)$ by $K_J(A)$ invariant finite dimensional sub complexes:
\[ X_k(A) = \frac{(G(A)/B)_k \times |\gC(A)|}{\sim}, \, k>0, \quad  X_0(A) = \frac{G_J(A)/B \times |\gC_J(A)|}{\sim},  \]
where $|\gC_J(A)|$ denotes the subspace $|\gC(A)| \cap \Delta_J(n)$, which is the realization of the nerve of the subcategory under the object $J \in \gC(A)$ (and is hence contractible). Notice that $X_0(A)$ is in fact a trivial $G_J(A)$-space homeomorphic to $|\gC_J(A)|$. We now proceed to show that $X_k(A)$ equivariantly deformation retracts onto $X_{k-1}(A)$ thereby showing equivariant contractibility.

\medskip
\noindent
An element $(gB,x) \in X_k(A)$ belongs to $X_{k-1}(A)$ if either $gT \in (G(A)/B)_{k-1}$ or if $x \in \Delta_K(n)$ and $gG_K(A) = hG_K(A)$ for some $h$ such that $hB \in (G(A)/B)_{k-1}$. This observation may be expressed as:
\[ X_k(A)/X_{k-1}(A) = \bigvee_{w \in ^JW, \,l(w)=k} (G_J(A) \, \tilde{w} \, B/B)_+ \wedge (|\gC(A)|/|\gC_w(A)|),  \]
where $(G_J(A) \, \tilde{w} \, B/B)_+$ indicates the one point compactification of the space, and $|\gC_w(A)|$ is a subcomplex of $|\gC(A)|$ consisting of faces indexed on chains $\varnothing \subseteq J_0 \subset J_1 \ldots \subset J_m \subseteq I$ for which $w$ does not belong to $^JW^{J_0}$. Hence, to show that $X_k(A)$ equivariantly retracts onto $X_{k-1}(A)$ it is sufficient to show that there is a deformation retraction from $|\gC(A)|$ to $|\gC_w(A)|$ for all $w \in {^JW}$. It is enough to show that the subspace $|\gC_w(A)|$ is contractible. 

\medskip
\noindent
Let $I_w \subseteq I$ be the subset defined as $I_w = \{ i \in I \, | \, l(wr_i) < l(w) \} $. 
By studying the coset $w W_{I_w}(A)$, it is easy to see that $I_w \in \gC(A)$ (see \cite{D3}(4.7.2)). Using claim \ref{minrep} we see that $|\gC_w(A)|$ is the geometric realization of the nerve of the subcategory:
\[ \gC_w(A) = \{ J \in \gC(A) \, | \, J \cap I_w \neq \varnothing \}. \]
Note that $\gC_w(A)$ is clearly equivalent to the subcategory consisting of subsets that are contained in $I_w$, which has a terminal object (namely $I_w$). Hence the nerve of $\gC_w(A)$ is contractible. 
\end{proof}

\begin{remark} \label{DC}
One may consider the $T$-fixed point subspace of $X(A)$, where $T$ is the maximal torus of $K(A)$. It follows that the $X(A)^T$ is a proper $W(A)$-space. Moreover, it is the classifying space for proper $W(A)$-actions. This space has been studied in great detail by M. W. Davis and coauthors \cite{D1,D2,D3}, and is sometimes called the Davis complex $\Sigma$, for the Coxeter group $W(A)$. It is not hard to see that 
\[ \Sigma =  X(A)^T = \frac{W(A) \times |\gC(A)|}{\sim},   \]
where we identify $(w,x)$ with $(v,y)$ iff $x=y \in \Delta_J(n)$, and $w=v \, \mbox{mod} \, W_J(A)$. As before, this may be expressed as a homotopy colimit of a suitable functor defined on the category $\gC(A)$ taking values in the category of $W(A)$-spaces. 
\end{remark}
\section{Dominant K-theory}
\noindent
In this section we study Dominant K-theory in detail. For the sake of simplicity, we assume throughout this section that the generalized Cartan matrix $A$ is symmetrizable. Under this assumption, any irreducible highest  weight representation of $K(A)$ is unitary. We believe that this assumption is not strictly necessary, and that most of our constructions may be extended to arbitrary generalized Cartan matrices. 

\noindent
Now recall some definitions introducted in Section 2. We say that a unitary representation of $K(A)$ is dominant if it decomposes as a completed sum of irreducible highest weight representations. Hence one has a maximal dominant representation $\mathcal{H}$ obtained by taking a completed sum of (countably many copies of) all highest weight representations. Since $K(A)$ is the amalgamated product (in the category of topological groups) of compact Lie groups, we obtain a continuous map from $K(A)$ to the group $\mathbb{U}(\mathcal{H})$ of unitary operators on $\mathcal{H}$, with the compact open topology.

\noindent
Now given any highest weight representation $\mbox{L}_{\mu}$, one may consider the set of weights in $\gH_{\Z}^*$ for $\mbox{L}_{\mu}$. It is known \cite{Ku} that this set is contained in the convex hull of the orbit: $W(A)\, \mu$. Hence we see that the set of weights of any proper highest weight representation $\mbox{L}_{\mu}$ belongs to the set of weights contained in the Tits cone Y. Conversely, any weight in Y is the $W(A)$ translate of some weight in the Weyl chamber C, and so we have:

\begin{claim} The set of distinct weights that belong to the maximal dominant representation $\mathcal{H}$ is exactly the set of weights in the Tits cone $\mbox{Y}$. Moreover, this set is closed under addition. 
\end{claim}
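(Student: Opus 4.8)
The plan is to prove the two assertions of the claim in turn: first that the set of weights occurring in $\mathcal{H}$ is exactly the set of integral weights lying in the Tits cone $\mathrm{Y}$, and then that this set is closed under addition. For the first part, I would argue by double inclusion. For the inclusion "weights of $\mathcal{H}$ $\subseteq$ weights in $\mathrm{Y}$", recall that $\mathcal{H}$ is a completed sum of copies of the irreducibles $\mathrm{L}_\mu$ for $\mu \in \mathrm{D}$, so it suffices to check that every weight of $\mathrm{L}_\mu$ lies in $\mathrm{Y}$. By the standard structure theory of integrable highest weight modules \cite{Ku}, the weights of $\mathrm{L}_\mu$ lie in the convex hull of the $W(A)$-orbit $W(A)\mu$; since $\mu \in \mathrm{C}$ and $\mathrm{Y} = W(A)\,\mathrm{C}$ is a convex cone (it is the Tits cone, and convexity of the Tits cone is part of its standard description), the convex hull of $W(A)\mu$ is contained in $\mathrm{Y}$, giving the inclusion. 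For the reverse inclusion, let $\nu \in \gH_{\Z}^* \cap \mathrm{Y}$. Since $\mathrm{C}$ is a fundamental domain for the $W(A)$-action on $\mathrm{Y}$, there is $w \in W(A)$ with $w\nu \in \mathrm{C}$, and $w\nu$ is again integral (the $W(A)$-action preserves $\gH_{\Z}^*$), so $w\nu \in \mathrm{D}$. Then $\mathrm{L}_{w\nu}$ has $w\nu$ as a weight, hence — being $W(A)$-stable as a weight set, since $\mathrm{L}_{w\nu}$ is integrable and its weight multiplicities are $W(A)$-invariant — it also has $\nu = w^{-1}(w\nu)$ as a weight; as $\mathrm{L}_{w\nu}$ is a summand of $\mathcal{H}$, $\nu$ is a weight of $\mathcal{H}$.

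For the second assertion, I want to show that if $\nu_1, \nu_2 \in \gH_{\Z}^* \cap \mathrm{Y}$ then $\nu_1 + \nu_2 \in \mathrm{Y}$. Here the cleanest route is to use the description of the Tits cone via the coroots: a standard fact (see the Coxeter/Kac--Moody literature, e.g. \cite{H}, \cite{Ku}) is that $\mathrm{Y}$ consists precisely of those $\lambda \in \gH_{\R}^*$ such that $\lambda(\alpha^\vee) < 0$ for only finitely many positive coroots $\alpha^\vee$. Given this characterization, if $\nu_1$ and $\nu_2$ each have only finitely many positive coroots on which they are negative, then $\nu_1 + \nu_2$ is negative on a positive coroot only if at least one of $\nu_1, \nu_2$ is, so the set of "bad" coroots for $\nu_1 + \nu_2$ is contained in the union of two finite sets, hence finite; therefore $\nu_1 + \nu_2 \in \mathrm{Y}$. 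Combined with the first part, which identifies weights of $\mathcal{H}$ with $\gH_{\Z}^* \cap \mathrm{Y}$, and the obvious fact that a sum of integral weights is integral, this shows the weight set of $\mathcal{H}$ is closed under addition.

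The main obstacle I anticipate is not any single deep step but rather pinning down precisely which standard facts about the Tits cone I am entitled to invoke, and in what form: specifically (i) that $\mathrm{Y}$ is convex, (ii) that $\mathrm{C}$ is a strict fundamental domain (already asserted in the excerpt just before the claim), and (iii) the "finitely many negative coroots" description of $\mathrm{Y}$, which is what makes closure under addition transparent. If one does not wish to quote (iii), an alternative for the additivity is more representation-theoretic: given $\nu_i \in \mathrm{D}$ obtained from the $\nu_i$ after applying suitable Weyl elements — but this does not directly control $\nu_1 + \nu_2$ since the two elements may be moved into $\mathrm{C}$ by different $w$'s, so one would instead note that $\mathrm{L}_{\mu_1} \otimes \mathrm{L}_{\mu_2}$ contains $\mathrm{L}_{\mu_1 + \mu_2}$ as a summand for $\mu_i \in \mathrm{D}$, reducing to the dominant case where additivity is immediate, and then transport back across $W(A)$ using $W(A)$-invariance of weight sets. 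Either way, the crux is bookkeeping with the Tits cone rather than a genuinely hard argument, so I would lead with the coroot characterization as the most economical presentation.
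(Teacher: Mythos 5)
Your proof is correct and, for the identification of the weight set of $\mathcal{H}$ with $\gH_{\Z}^* \cap \mbox{Y}$, it follows exactly the paper's (very terse) argument: weights of $\mbox{L}_\mu$ lie in the convex hull of $W(A)\mu$, hence in the convex $W(A)$-stable cone $\mbox{Y}$; conversely every integral weight of $\mbox{Y}$ is a $W(A)$-translate of a dominant weight and so occurs in some $\mbox{L}_\mu$ by $W(A)$-invariance of weight multiplicities. The paper offers no argument at all for closure under addition, so your coroot-finiteness argument is a genuine addition; it is valid, though you could get it even more cheaply from facts you already invoke, since $\mbox{Y}$ is a convex cone and hence $\nu_1+\nu_2 = 2\cdot\tfrac12(\nu_1+\nu_2) \in \mbox{Y}$, with integrality of the sum being obvious. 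Your caution about the tensor-product alternative is well placed: that route only handles the case where both weights can be made dominant by the \emph{same} Weyl element, so the cone-theoretic argument is the right one to lead with.
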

\begin{defn}
For $G \subseteq K(A)$, a compact Lie subgroup, let the dominant representation ring of $G$ denoted by $\mbox{DR}_G$, be the free abelian group on the set of isomorphism classes of irreducible $G$ representations belonging to $\mathcal{H}$. This group admits the structure of a subring of the representation ring $\mbox{R}_G$ of $G$. 
\end{defn}
\noindent
Recall from Section 2 that $\Kk$ denotes the periodic K-theory spectrum represented by a model $\mathcal{F}(\mathcal{H})$ for the space of Fredholm operators on $\mathcal{H}$. 

\noindent
In the next few results, we explore the structure of Dominant K-theory. We begin with a comparison between Dominant K-theory and standard equivariant K-theory with respect to compact subgroups. 

\noindent
Let $\mbox{K}^*_G(X)$ denote standard equivariant K-theory modeled on a the space of Fredholm operators on a $G$-stable Hilbert space $\mathcal{L}(G)$ \cite{S}. Since, by definition $\mathcal{L}(G)$ contains every $G$-representation infinitely often, we may fix an equivariant isometry $\mathcal{H} \subseteq \mathcal{L}(G)$ (notice that any two isometries are equivariantly isotopic). Then we have:

\begin{claim}
Let $G \subseteq K(A)$ be a compact subgroup. Then the stabilization map is an injection:
\[ \mbox{St}: {}^A \Kth^*_G(S^0) \longrightarrow \mbox{K}^*_G(S^0). \]
Furthermore, given a proper orbit $X = K(A)_+\wedge_G S^0$ for some compact Lie subgroup $G \subseteq K(A)$, we have a canonical isomorphism:
\[ \Kth^*_{K(A)}(X) = \mbox{DR}_G[\beta^{\pm1}],   \]
where $\beta$ is the Bott class in degree 2, and $\mbox{DR}_G$ lies in degree 0. In particular, the odd Dominant K-cohomology of $X$ is trivial. 

\end{claim}
\begin{proof}
Let $\mathcal{M}$ be a minimal $G$-invariant complement of $\mathcal{H}$ inside $\mathcal{L}(G)$, so that $\mathcal{H}\oplus \mathcal{M}$ is an $G$-stable Hilbert space inside $\mathcal{L}(G)$. In particular, $\mathcal{M}$ and $\mathcal{H}$ share no nonzero $G$-representations. On the level of $G$-fixed points on the space of Fredholm operators, we have an inclusion: 
\[ \mathcal{F}^G(\mathcal{H} \oplus \mathcal{M}) = \mathcal{F}^G(\mathcal{H}) \times \mathcal{F}^G(\mathcal{M}) \longrightarrow \mathcal{F}^G (\mathcal{L}(G)). \]
Moreover, it is easy to see that this map is a homotopy equivalence. Hence, we have an injection:
\[ \mbox{St}: {}^A \Kth^0_G(S^m) = \pi_m\mathcal{F}^G(\mathcal{H}) \longrightarrow \pi_m\mathcal{F}^G(\mathcal{L}(G)) = \mbox{K}^0_G(S^m).  \]
Furthermore, decomposing $\mathcal{H}$ into its irreducible isotypical summands, shows that the group $\pi_0 (\mathcal{F}^G(\mathcal{H}))$ is free on the irreducible $G$-summands in $\mathcal{H}$. In other words, it is isomorphic to $\mbox{DR}_G$. This gives the identification of $\Kth^0_{K(A)}(X)$ we claimed. 
\end{proof} 

\noindent
The following theorem may be seen as a Thom isomorphism theorem for Dominant K-theory. It would be interesting to know the most general conditions on an equivariant vector bundle that ensure the existence of a Thom class. 
\begin{thm} \label{Thom}
Let $G \subseteq K(A)$ be a subgroup of the form $K_J(A)$ for some $J \in \gC(A)$, and let $r$ be the rank of $K(A)$. Let $\gG$ denote the Adjoint representation of $K_J(A)$. Then there exists a fundamental irreducible representation $\lambda$ of the Clifford algebra Cliff ($\gG \otimes \C$) that serves as a Thom class in ${}^A \Kth^r_G(S^{\gG})$ (a generator for ${}^A \Kth^*_G(S^{\gG})$ as a free module of rank one over $\mbox{DR}_G$), where $S^{\gG}$ denotes the one point compactification of $\gG$. 
\end{thm}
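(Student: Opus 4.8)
The plan is to exhibit the Thom class as an explicit Fredholm family built from Clifford multiplication, and to verify the free-module statement by restricting to the fixed points of the maximal torus $T$ and using the preceding claim that identifies ${}^A\Kth^*_G$ with a summand of usual $G$-equivariant K-theory on orbits with connected, maximal-rank isotropy. Since $G = K_J(A)$ is a compact Lie group with maximal torus $T$ and $\gG = \Lie(K_J(A))$ is its adjoint representation, the complexification $\gG\otimes\C$ carries a $G$-invariant nondegenerate symmetric bilinear form (the Killing form suitably normalized, using symmetrizability of $A$), so we get a genuine Clifford algebra $\mbox{Cliff}(\gG\otimes\C)$ with a compatible $G$-action. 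Because $\dim_\C(\gG\otimes\C) = \dim_\R \gG = $ an even number when $G$ has a maximal torus of full rank... more carefully, $\dim\gG = \dim G$; one must track parity. When $\dim\gG$ is even, $\mbox{Cliff}(\gG\otimes\C)$ has a unique irreducible module $\lambda$ of dimension $2^{\dim\gG/2}$; when odd, a small modification (adding a trivial summand, as in the standard treatment of odd-dimensional Clifford modules, or working with the reduced Clifford algebra) is needed. I would first set up $\lambda$ as a $\Z/2$-graded $G$-representation $\lambda = \lambda^+ \oplus \lambda^-$, and observe that $\lambda$ belongs to $\mathcal H$: this is exactly the content of the Lemma proved above — the weights of $\lambda$ are, up to the shift by the Weyl element $\rho_J$, the weights occurring in $\Lambda^*(\eta_-)$, i.e. $\mbox{ch}\,\lambda = e^{-\rho_J}\prod_{\alpha\in\Delta_+}(1+e^{\alpha})$ type expression, all of whose weights lie in the Tits cone, hence $\lambda$ decomposes into highest weight representations.

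Next I would construct the Thom class itself. The standard recipe: Clifford multiplication by $v\in\gG$ on $\lambda$ gives, for each $v$, a skew-adjoint (odd) operator $c(v)$ with $c(v)^2 = -|v|^2$. The map $v\mapsto c(v)$, one-point-compactified, defines a map $S^{\gG}\to \mbox{Fred}(\lambda)$ from the sphere to the space of (graded) Fredholm operators, sending $\infty$ to an invertible operator and $0$ to the zero operator; this is precisely a class in ${}^A\Kth^{r}_G(S^{\gG})$ once we tensor $\lambda$ into $\mathcal H$ (using $\mathcal H\otimes\lambda = \mathcal H$ by maximality) and keep track of the degree shift $r = \rank K(A)$. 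I should check that the degree bookkeeping works out: $S^{\gG}$ has "dimension" $\dim\gG$, which is $2|I| - \rank A$-ish depending on $J$; the claim pins the class in degree $r = \rank K(A) = 2|I| - \rank A$, so one needs $\dim\gG \equiv r$ in the relevant sense and the Bott-periodicity identification $\mbox{Fred}(\lambda)\simeq$ the K-theory space to line up — this is a parity/normalization check, not a difficulty in principle.

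The heart of the argument — that this class generates ${}^A\Kth^*_G(S^{\gG})$ freely of rank one over $\mbox{DR}_G$ — I would reduce to the analogous statement in ordinary $G$-equivariant K-theory. By the earlier Claim (on $G = K_J(A)$, $J\in\gC(A)$, with $X$ having connected maximal-rank isotropy), the stabilization map identifies ${}^A\Kth^*_G(S^{\gG})$ with the Dominant-K-theory summand inside $\mbox{K}_G^*(S^{\gG})$, where $S^{\gG}$ is the one-point compactification of the adjoint representation — an orbit-type situation since $S^{\gG}$ built from a single representation has isotropy $G$ (connected, maximal rank) at $0$ and trivial reduced K-theory away from it. In ordinary equivariant K-theory the Thom isomorphism $\mbox{K}_G^*(S^{\gG}) \cong \mbox{R}_G$ holds because $\gG$, as the adjoint representation of a connected group with maximal torus, is $\mbox{Spin}^c$ (indeed $\mbox{Spin}$ after the standard adjustment), with Thom class exactly the Clifford module $\lambda$; this is classical (Atiyah–Bott–Shapiro, Bott). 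The remaining point is that this classical Thom class lies in the Dominant summand, which is again the Lemma: $\lambda\in\mathcal H$, so its class lies in $\mbox{DR}_G[\beta^{\pm1}]\subseteq \mbox{R}_G[\beta^{\pm1}]$, and multiplication by the class of $\lambda$ carries $\mbox{DR}_G$ isomorphically onto the image of ${}^A\Kth^*_G(S^{\gG})$. I expect the main obstacle to be exactly this compatibility: verifying that the classical Thom class and the Dominant-K-theory summand are aligned so that the Thom isomorphism restricts to an isomorphism on the Dominant part, rather than merely an injection — this is where the precise content of the Lemma (the weights of $\lambda$, shifted by $\rho_J$, stay inside the Tits cone, and a character of $\mbox{L}_\mu$ lying in $\mathcal H$ forces $\mbox{L}_\mu\in\mathcal H$) does the essential work. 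A secondary but genuine nuisance is the even/odd dimension dichotomy for $\gG$ and the consequent choice of "fundamental" Clifford module, which must be made uniformly enough that the resulting class is canonical up to the expected ambiguity.
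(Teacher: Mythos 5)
Your proposal is correct and follows essentially the same route as the paper: the paper's $\lambda$ is exactly the fundamental Clifford module twisted into a genuine $G$-representation as $\mathbb{S}(\gH)\otimes \mbox{L}_{\rho_J}$ (the $\rho_J$-shift playing the role of your $\mbox{Spin}^c$ adjustment, with $\mathbb{S}(\gH)$ absorbing the parity issue), and the free-module statement is likewise reduced via the stabilization map to the classical equivariant Thom isomorphism. The one step you flag as the ``main obstacle'' --- that a class $\mbox{St}(\lambda)\otimes\beta$ in the dominant summand forces $\beta\in\mbox{DR}_G$ --- is settled in the paper exactly as you anticipate, by restricting along $G_+\wedge_T S^{\gH_{\R}}\to S^{\gG}$ and observing that the resulting virtual character $\sum_{w\in W_J(A)}(-1)^w e^{w(\rho_J+\mu)}$ can lie in $\mathcal{H}$ only if $\mu$ is dominant for $K(A)$, which is precisely the computation in the Lemma.
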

\begin{proof} 
We begin by giving an explicit description of $\lambda$. Fix an invariant inner product B on $\gG$, and let Cliff($\gG \otimes \C$) denote the corresponding complex Clifford algebra. One has the triangular decomposition: $\gG \otimes \C = \eta_+ \oplus \eta_- \oplus \gH$, where $\eta_{\pm}$ denote the nilpotent subalgebras. The inner product extends to a Hermitian inner product on $\gG \otimes \C$, which we also denote B, and for which the triangular decomposition is orthogonal . 

\noindent
Recall that $\gH$ contains the lattice $\gH_{\Z}$ containing the coroots $h_i$ for $i \in I$. Fix a dual set $h_i^* \in \gH_{\Z}^*$. We may decompose $\eta_{\pm}$ further into root spaces indexed on the roots generated by the simple roots in the set $J$:
\[ \eta_{\pm} = \sum_{\alpha \in \Delta_{\pm}} \gG_{\alpha},  \]
where $\Delta_{\pm}$ denotes the positive (resp. negative) roots for $K_J(A)$. Now fix a Weyl element $\rho_J$, defined by:
\[ \rho_J = \sum_{j \in J} h_j^*. \]
It is easy to see using character theory that the irreducible $G$-module $\mbox{L}_{\rho_J}$, with highest weight $\rho_J$ belongs to $\mathcal{H}$, and has character:
\[ \mbox{ch} \, \mbox{L}_{\rho_J}  = e^{\rho_J} \prod_{\alpha \in \Delta_+} (1+ e^{-\alpha}) = e^{\rho_J} \mbox{ch} \, \Lambda^*(\eta_-),  \]
where $\Lambda^*(\eta_-)$ denotes the exterior algebra on $\eta_-$. In particular, the vector space $\mbox{L}_{\rho_J}$ is naturally $\Z/2$-graded and belongs to $\mathcal{H}$. The exterior algebra $\Lambda^*(\eta_-)$ can naturally be identified with the fundamental Clifford module for the Clifford algebra Cliff($\eta_+ \oplus \eta_-$). Let ${\mathbb S}(\gH)$ denote an irreducible Clifford module for Cliff($\gH$). It is easy to see that the action of Cliff($\gG \otimes \C$) on ${\mathbb S}(\gH) \otimes \Lambda^*(\eta_-)$ extends uniquely to an action of $G \ltimes \mbox{Cliff}(\gG \otimes \C)$ on $\lambda$, with highest weight $\rho_J$, where:
\[ \lambda = \C_{\rho_J} \otimes {\mathbb S}(\gH) \otimes \Lambda^*(\eta_-) = {\mathbb S}(\gH) \otimes \mbox{L}_{\rho_J}. \]
The Clifford multiplication parametrized by the base space $\gG$, naturally describes $\lambda$ as an element in ${}^A \Kth^r_G(S^{\gG})$. 

\noindent
We now show that $\lambda$ is a free generator of rank one for ${}^A \Kth^r_G(S^{\gG})$, as a $\mbox{DR}_G$-module. Let $\mbox{L}_\mu$ be an irreducible generator of $\mbox{DR}_G$. Consider the element $\lambda \otimes \mbox{L}_{\mu} \in {}^A\Kth^r_G(S^{\gG})$, and restrict it to ${}^A\Kth^r_T(S^{\gH_{\R}})$, along the action map:
\[ \varphi: G_+ \wedge_T S^{\gH_{\R}} \longrightarrow S^{\gG}, \]
where $T \subseteq G$ is the maximal torus. Identifying ${}^A\Kth^r_T(S^{\gH_{\R}})$, with $\mbox{DR}_T$, and using the character formula, we see that $\lambda \otimes \mbox{L}_{\mu}$ has virtual character given by:
\[ \sum_{w \in W_J(A)} (-1)^w e^{w(\rho_J + \mu)}. \]
This correspondence shows that ${}^A \Kth_G^r(S^{\gG})$ contains a subgroup $\mbox{DR}^+_G$ generated by positive dominant characters $\tau$, with the property: $\{  \tau \in \mbox{D} \, | \, \tau(h_j) > 0, \,  j \in J \}$. 

\medskip
\noindent
 It remains to show that all elements in ${}^A \Kth_G^r(S^{\gG})$ are in this subgroup. For this we will make an explicit computation of ${}^A \Kth_G^r(S^{\gG})$ using a homotopy decomposition of $S^{\gG}$ as a $G$-space as constructed in \cite{CK}. This method of computation uses the Bousfield-Kan spectral sequence for the cohomology of a homotopy colimit. This spectral sequence will be used extensively throughout this paper. 
 
 \medskip
 \noindent
 It will be more convenient to study the unit sphere $S(\gG)$ in the representation $\gG$. Notice that one has an equivariant cofiber sequence: 
 \[ S(\gG)_+ \longrightarrow S^0 \longrightarrow S^{\gG}. \]
 Therefore, the calculation of ${}^A\Kth_G(S^{\gG})$ will follow from a similar calculation for $S(\gG)$.

 \medskip
 \noindent
 It is shown in \cite{CK} that $S(\gG)$ is a suspension of the following space (the suspension coordinates correspond to the rank of the center of $G$):
 \[ X(G) := \hocolim_{S \subsetneq J} G/K_I(A). \]
 Filtering $X(G)$ by the equivariant skeleta, we get a convergent spectral sequence $(E_n, d_n)$, $|d_n| = (n,1-n)$, and with $E_2$ term given by:
\[ E_2^{p,*} = \varprojlim  {}^p {}^A\Kth_{G}^*(G/K_{\bullet}(A)) = \varprojlim  {}^p \mbox{DR}_{\bullet}[\beta^{\pm1}] \Rightarrow {}^A\Kth_{G}^p(X(G)))[\beta^{\pm1}], \]
where $\beta$ denotes the invertible Bott class, and we have simplified the notation $\mbox{DR}_{\bullet}$ to denote the functor $S \mapsto \mbox{DR}_{K_S(A)}$ for $S \subsetneq J$.

\medskip
\noindent
Now using character formula, we see that $\mbox{DR}_S$ is isomorphic to the $W_S(A)$-invariant characters:  $\mbox{DR}_T^{W_S(A)}$. In other words, we have: 
\[ \mbox{DR}_S = \mbox{DR}_T^{W_S(A)} = \Hom_{W_J(A)}(W_J(A)/W_S(A), \mbox{DR}_T). \]
Consider the $W_J(A)$-equivariant spherical Davis complex: $\Sigma_J$ defined as: 
\[ \Sigma_J = \hocolim_{S \subsetneq J} W_J(A)/W_S(A). \]
 It follows that $\varprojlim  {}^p \mbox{DR}_{\bullet}$ is canonically isomorphic to the equivariant cohomology (as defined in \cite{D2}) of $\Sigma_J \subset \gH$, with values in the ring $\mbox{DR}_T$: 
 \[ \varprojlim {}^p \mbox{DR}_{\bullet} = \mbox{H}^p_{W_J(A)}(\Sigma_J, \mbox{DR}_T). \] 

\medskip
\noindent
Now recall that the set of dominant weights $\mbox{D}$ has a decomposition indexed by subsets $K \subseteq I$:
\[ \mbox{D} = \coprod  \mbox{D}_K, \quad \mbox{where} \quad \mbox{D}_K = \{ \lambda \in \mbox{D} \, | \, \lambda(h_k)=0, \iff k \in K \}. \]
Let $\mbox{R}_T^K$ denote the ideal in $\mbox{DR}_T$ generated by the weights belonging to the subset $\mbox{D}_K$. We get a corresponding decomposition of $\mbox{DR}_T$ as a $W(A)$-module:
\[ \mbox{DR}_T = \bigoplus \mbox{DR}_T^K, \quad \mbox{where} \quad \mbox{DR}_T^K \cong \Z[W(A)/W_K(A)] \otimes \mbox{R}_T^K. \]
We therefore have an induced decomposition of the functor $\mbox{DR}_{\bullet} = \bigoplus \mbox{DR}_{\bullet}^K$ indexed by $K \subseteq I$. On taking derived functors we have:
\[ \varprojlim {}^p \mbox{DR}_{\bullet} = \bigoplus \varprojlim {}^p \mbox{DR}_{\bullet}^K = \bigoplus \mbox{H}^p_{W_J(A)}(\Sigma_J, \Z[W(A)/W_K(A)]) \otimes \mbox{R}_T^K. \] 
Now the left $W_J(A)$-space $W(A)/W_K(A)$ is a disjoint union over double cosets: 
\[ W(A)/W_K(A) = \coprod_{w \in {}^{J} W {}^K} W_{J}(A)/W_{K_w}(A), \quad \mbox{where} \quad W_{K_w}(A) = W_{J}(A) \cap \, w \, W_K(A) \, w^{-1}.  \]
Since $\Sigma_J$ is a compact simplicial complex, it is easy to see directly or using \cite{D1, D2} that:
\[ \mbox{H}^p_{W_J(A)}(\Sigma_J, \Z[W_J(A)]) = \mbox{H}^p(\Sigma_J, \Z) = 0, \, \,  \mbox{if} \, \, p\neq \{0, |J|-1\}, \, \,  \mbox{and} \, \,   = \, \Z \, \, \mbox{if} \, \, p=\{0, |J|-1\}. \]
In addition, given a non-empty subset  $K_w \subseteq J$, 
\[  \mbox{H}^p_{W_J(A)}(\Sigma_J, \Z[W_J(A)/W_{K_w}(A)]) = \mbox{H}^p(\Sigma_J/W_{K_w}(A), \Z) = 0 \, \, \mbox{if} \, \, p \neq 0, \, \, = \Z \, \, \mbox{if} \, \, p=0. \] 
The last equality above follows form the fact that $\Sigma_J/W_{K_w(A)}$ is the fundamental domain of the $W_{K_w(A)}$-action on $\Sigma_J$. Hence it is the intersection of $\Sigma_J$ with a cone that lies in a half-quadrant. In particular, it is a retract of the cone and therefore contractible. 

\medskip
\noindent
It follows that the spectral sequence has only two columns $p=\{0, |J|-1\}$, with:
\[ E_2^{0,\ast} = \bigoplus_{\{K, {}^{J} W {}^K\}} \mbox{R}_T^K[\beta^{\pm1}] = \mbox{DR}_G[\beta^{\pm1}], \quad \quad E_2^{|J|-1,\ast} = \bigoplus_{\{K, {}^{J} \overline{W} {}^K\}} \mbox{R}_T^K[\beta^{\pm1}] = \mbox{DR}^+_G[\beta^{\pm1}],\]
 where ${}^{J} \overline{W} {}^K$ is the subset of elements $w \in {}^{J} W {}^K$ for which $K_w = \emptyset$. This spectral sequence must collapse because $\mbox{DR}_G$ is detected by the pinch map $S(\gG)_+ \longrightarrow S^0$. It follows that ${}^A\Kth^\ast_G(S^{\gG})$ is isomorphic to $\mbox{DR}_G^+[\beta^{\pm1}]$, which is what we wanted to show. 
 
 \end{proof}

 \begin{remark} \label{PF}
 The above Thom isomorphism allows us to define the pushforward map in Dominant K-theory for inclusions between the groups $K_J(A)$ induced by morphisms in $\gC(A)$. For example, let $\iota : T \rightarrow K_J(A)$ be the canonical inclusion of the maximal torus. Then we have the (surjective) pushforward map given by Dirac Induction: 
 \[ \iota_J : \mbox{DR}_T \longrightarrow \mbox{DR}_J, \]
 where $\mbox{DR}_J$ denotes the dominant representation ring of $K_J(A)$. Moreover, we also recover the character formula as the composite:
 \[ \iota^* \iota_J (e^{\mu}) = \frac{\sum_{w \in W_J(A)} (-1)^w e^{w(\mu)}} {e^{\rho_J} \prod_{\alpha \in \Delta_+}(1-e^{-\alpha})}.\]
 \end{remark}

 \section{The Dominant K-theory for the compact type}
\noindent
Fix a Kac-Moody group $K(A)$ of compact type, with maximal torus $T$. In this section we describe the Dominant K-cohomology groups of the space $X(A)$. 

\medskip
\noindent
Recall that $X(A)$ can be written as a proper $K(A)$-CW complex given by:
\[ X(A) = \frac{K(A)/T \times \Delta(n)}{\sim},  \]
where we identify $(gT,x)$ with $(hT,y)$ iff $x=y \in \Delta_J(n)$, and $g=h \, \mbox{mod} \, K_J(A)$. 

\noindent
Define the pinch map induced by the projection: $\Delta(n) \rightarrow \Delta(n)/\partial \Delta(n) = S^n$:
\[ \pi: X(A) \longrightarrow K(A)_+\wedge_T S^n.  \]
Assume that the generalized Cartan matrix $A$ has size $n+1$, and that $K(A)$ is not of finite type. Let $\mbox{R}_T^{\varnothing}$ denote the ideal within the character ring of $T$ generated by the regular dominant weights, $\mbox{D}_+$. In particular, we may identify $\mbox{R}_T^{\varnothing}$ with the Grothendieck group of irreducible highest weight representations of $K(A)$. 

\medskip
\noindent
For the space $X(A)$, define the reduced Dominant K-theory $\tilde{\Kth}_{K(A)}^*(X(A))$, to be the kernel of the restriction map along any $T$-the orbit in $X(A)$. 
 \begin{thm} \label{thm1}
We have an isomorphism of graded groups:
\[ \tilde{\Kth}_{K(A)}^*(X(A)) = \mbox{R}_T^{\varnothing}[\beta^{\pm1}],  \]
where $\beta$ is the Bott class in degree 2, and $\mbox{R}_T^{\varnothing}$ is graded so as to belong entirely in degree $n$. Moreover, the identification of $\tilde{\Kth}_{K(A)}^n(X(A))$ with $\mbox{R}_T^{\varnothing}$ is induced by the pinch map $\pi$.
\end{thm}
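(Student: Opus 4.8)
The plan is to compute $\tilde{\Kth}_{K(A)}^*(X(A))$ by using the homotopy colimit decomposition $X(A) = \hocolim_{J \in \gC(A)} K(A)/K_J(A)$ together with the filtration of $X(A)$ by skeleta coming from the Bruhat/length filtration of $K(A)/T$, exactly as in the proof of Theorem \ref{Tits}. The basic strategy: filter $X(A)$ by the subspaces $X_k(A)$, identify the subquotients $X_k(A)/X_{k-1}(A)$ as wedges of the proper cells $(K_J(A)\tilde w B/B)_+ \wedge (|\gC(A)|/|\gC_w(A)|)$ indexed by $w \in {}^{\varnothing}W = W(A)$ with $l(w) = k$, and run the resulting spectral sequence (or, better, a direct inductive argument) in Dominant K-theory. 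Each such cell is a single proper $K(A)$-orbit smashed with a sphere, so by the Claim on single orbits its Dominant K-cohomology is $\mbox{DR}_T[\beta^{\pm 1}]$ placed in a degree determined by $l(w)$ and the dimension of $|\gC(A)|/|\gC_w(A)|$. Since $K(A)$ is of compact type, $\gC(A)$ is the poset of \emph{all proper} subsets of $I$ together with the improper ones that remain spherical — in fact for compact type $\gC(A)$ consists precisely of all subsets $J \subsetneq I$, so $|\gC(A)|$ is the boundary complex of the $n$-simplex's barycentric subdivision, i.e. $|\gC(A)| \simeq S^{n-1}$, and more relevantly $|\gC(A)|$ relative to the contractible subcomplexes $|\gC_w(A)|$ contributes in a single dimension. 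This is what forces everything into degree $n$.

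The key computational input is the \emph{long exact sequence} relating $\tilde\Kth^*_{K(A)}(X_k(A))$, $\tilde\Kth^*_{K(A)}(X_{k-1}(A))$, and the Dominant K-theory of the subquotient wedge. The heart of the matter is to show that all the connecting maps vanish and the sequence collapses to an increasing union of groups, each a direct sum of copies of $\mbox{R}_T^{\varnothing}[\beta^{\pm1}]$ indexed by $w \in W(A)$. Here is where I expect to exploit two facts: first, the Claim on single orbits tells us the \emph{odd} Dominant K-cohomology of a single proper orbit vanishes, so for parity reasons (the cells $(K_J(A)\tilde w B/B)$ are even-dimensional, being built from $\C^{l(w)}$-bundles) the spectral sequence is concentrated in one parity after accounting for the $|\gC(A)|/|\gC_w(A)|$ factor; second, the contractibility of $|\gC_w(A)|$ (established in the proof of Theorem \ref{Tits}) means $|\gC(A)|/|\gC_w(A)| \simeq |\gC(A)|$, which is itself — for compact, non-finite type — a wedge of $(n-1)$-spheres or, after accounting for the whole complex $|\gC(A)|$ being $(n-1)$-dimensional and the relevant relative term landing the contribution in dimension $n-1$ after the extra coning, contributes a single copy of the relevant representation group in degree $n$.

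The \emph{reduced} theory is what kills the unwanted piece. The restriction-to-an-orbit map $\tilde\Kth^*_{K(A)}(X(A)) \hookrightarrow \Kth^*_{K(A)}(K(A)/T) = \mbox{R}_T[\beta^{\pm1}]$ has kernel by definition the reduced theory, and the pinch map $\pi\colon X(A) \to K(A)_+ \wedge_T S^n$ realizes this: the cofiber sequence $K(A)/T \to X(A) \to X(A)/(K(A)/T)$ together with the identification of $X(A)/(K(A)/T)$ after collapsing, combined with the skeletal computation, shows that $\tilde\Kth^n_{K(A)}(X(A))$ is exactly the subgroup of $\mbox{DR}_T$ (copies indexed over $W(A)$, then reassembled into $\mbox{R}_T$-module structure) that arises as the kernel — and the character-theoretic identity $\mbox{A}_{\varnothing}\cdot(\text{antidominant combinations}) = \sum_{w}(-1)^w e^{w(\cdots)}$ from the proof of Lemma on $\mathcal{M}(G)$ and Theorem \ref{Thom} pins it down to precisely the ideal $\mbox{R}_T^{\varnothing}$ generated by $\mbox{D}_+$. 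Concretely, I would show that a class in $\tilde\Kth^n$ restricted to $K(A)/T$ must have $T$-character in $\mathcal{H}$ that is $W(A)$-alternating, hence (after dividing by the Weyl denominator $\mbox{A}_{\varnothing}$) corresponds to an element of $\mbox{R}_T^{\varnothing}$, and conversely every regular dominant weight gives such a class via Dirac induction (Remark \ref{PF}).

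\textbf{The main obstacle} I anticipate is controlling the spectral sequence / long exact sequence: showing the differentials vanish requires knowing that the attaching maps of the proper cells, after applying Dominant K-theory, induce zero — this is not automatic and will need the $W(A)$-equivariant structure of the Bruhat cells together with the fact (to be checked) that $|\gC_w(A)| \hookrightarrow |\gC(A)|$ is null-homotopic in a way compatible with the $K_J(A)$-actions, so that the relevant relative term $|\gC(A)|/|\gC_w(A)|$ has the homotopy type of $|\gC(A)|$ with its contribution concentrated in top degree $n-1$. Equivalently, the combinatorial fact that for compact non-finite type the nerve $|\gC(A)|$ together with the subcomplexes $|\gC_w(A)|$ assembles — via the Davis-complex picture of Remark \ref{DC} — into something whose reduced cohomology is free and concentrated, is the technical crux. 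Once that vanishing/collapse is in hand, the identification of the answer with $\mbox{R}_T^{\varnothing}[\beta^{\pm1}]$ in degree $n$, and the fact that $\pi^*$ realizes the degree-$n$ part, follow from the single-orbit Claim and the character computation, which are already essentially carried out in Section 4.
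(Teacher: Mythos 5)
Your plan fails at its first step: the filtration $X_k(A)$ coming from the Bruhat/length filtration of $G(A)/B$ is only $G_J(A)$- (hence $K_J(A)$-) invariant for a fixed $J$ --- it was introduced in the proof of Theorem \ref{Tits} precisely to establish $K_J(A)$-equivariant contractibility one compact subgroup at a time --- and it is \emph{not} $K(A)$-invariant (already for $SU(2)$ the point $B/B\subset \mathbb{P}^1$ is not preserved by $K$). So there is no induced long exact sequence or spectral sequence in $K(A)$-equivariant Dominant K-theory for this filtration. Relatedly, the subquotients $(B\tilde{w}B/B)_+\wedge(|\gC(A)|/|\gC_w(A)|)$ are not ``single proper $K(A)$-orbits smashed with spheres'': $B\tilde{w}B/B\cong\C^{l(w)}$ is a Schubert cell, not a $K(A)$-orbit, so the single-orbit Claim of Section 4 does not apply to them. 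A further error: for compact non-finite type, $\gC(A)$ is the poset of \emph{all} proper subsets of $I$, which has the initial object $\varnothing$; hence $|\gC(A)|$ is the full barycentric subdivision of $\Delta(n)$, a contractible $n$-disk, not $S^{n-1}$. The degree-$n$ shift in the answer does not come from $|\gC(A)|$ being a sphere but from the pinch map $\Delta(n)\rightarrow\Delta(n)/\partial\Delta(n)=S^n$ together with compactly supported cohomology of the Davis complex.

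The filtration that works is the $K(A)$-equivariant skeletal filtration of $X(A)=\hocolim_{J\in\gC(A)}K(A)/K_J(A)$, whose subquotients are wedges of $K(A)/K_J(A)_+\wedge S^k$; the single-orbit Claim then gives an $E_2$-term $\varprojlim{}^p\,\mbox{DR}_{\bullet}[\beta^{\pm1}]$. The computational heart --- which your proposal flags as ``the technical crux'' but does not supply --- consists of: (i) the identification $\mbox{DR}_J\cong\mbox{DR}_T^{W_J(A)}=\Hom_{W(A)}(W(A)/W_J(A),\mbox{DR}_T)$ via Dirac induction, converting the derived limit into $\mbox{H}^p_{W(A)}(\Sigma,\mbox{DR}_T)$ for the Davis complex $\Sigma$; (ii) the $W(A)$-module decomposition $\mbox{DR}_T=\bigoplus_K\Z[W(A)/W_K(A)]\otimes\mbox{R}_T^K$; and (iii) Davis's computation that $\mbox{H}^p_c(\Sigma,\Z)$ is $\Z$ concentrated in degree $n$, together with the observation that each reflection $r_i$ acts by $-1$ on $\mbox{H}^n_c(\Sigma,\Z)$, which kills every summand with $K\neq\varnothing,I$ and leaves only $\mbox{R}_T^I$ in degree $0$ (removed by passing to the reduced theory) and $\mbox{R}_T^{\varnothing}$ in degree $n$, the latter sitting in the lowest filtration and hence in the image of $\pi^*$. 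Your character-theoretic identification of the degree-$n$ part via $W(A)$-alternating characters is in the spirit of the geometric interpretation of Section 6, but it presupposes the collapse and the concentration in degree $n$, which is exactly what your argument does not establish.
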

\begin{proof}
Since $K(A)$ is of compact type (but not of finite type), the category $\gC(A)$ is the category of all proper subsets of $I$. We may see $X(A)$ as a homotopy colimit of a diagram over $\gC(A)$ taking the value $K(A)/K_J(A)$ for $J \in \gC(A)$.  

\noindent
Filtering $X(A)$ by the equivariant skeleta, we get a convergent spectral sequence $(E_n, d_n)$, $|d_n| = (n,n-1)$, and with $E_2$ term given by:
\[ E_2^{p,*} = \varprojlim  {}^p \Kth_{K(A)}^*(K(A)/K_{\bullet}(A)) = \varprojlim  {}^p \mbox{DR}_{\bullet}[\beta^{\pm1}] \Rightarrow \Kth_{K(A)}^p(X(A))[\beta^{\pm1}], \]
where $\mbox{DR}_{\bullet}$ denotes the functor $J \mapsto \mbox{DR}_{K_J(A)}$, and $\beta$ denotes the invertible Bott class of degree two. Now using remark \ref{PF}, it is easy to see that $\mbox{DR}_J$ is isomorphic to the $W_J(A)$-invariant characters:  $\mbox{DR}_T^{W_J(A)}$. Notice also that 
\[ \mbox{DR}_T^{W_J(A)} = \Hom_{W(A)}(W(A)/W_J(A), \mbox{DR}_T). \]
 It follows that $\varprojlim  {}^p \mbox{DR}_{\bullet}$ is canonically isomorphic to the equivariant cohomology (as defined in \cite{D2}) of the Davis complex $\Sigma$ (\ref{DC}), with values in the ring $\mbox{DR}_T$: 
 \[ \varprojlim {}^p \mbox{DR}_{\bullet} = \mbox{H}^p_{W(A)}(\Sigma, \mbox{DR}_T). \] 
Now recall that the set of dominant weights $\mbox{D}$ has a decomposition indexed by subsets $K \subseteq I$:
\[ \mbox{D} = \coprod  \mbox{D}_K, \quad \mbox{where} \quad \mbox{D}_K = \{ \lambda \in \mbox{D} \, | \, \lambda(h_k)=0, \iff k \in K \}. \]
Let $\mbox{R}_T^K$ denote the ideal in $\mbox{DR}_T$ generated by the weights belonging to the subset $\mbox{D}_K$. We get a corresponding decomposition of $\mbox{DR}_T$ as a $W(A)$-module:
\[ \mbox{DR}_T = \bigoplus \mbox{DR}_T^K, \quad \mbox{where} \quad \mbox{DR}_T^K \cong \Z[W(A)/W_K(A)] \otimes \mbox{R}_T^K. \]
We therefore have an induced decomposition of the functor $\mbox{DR}_{\bullet} = \bigoplus \mbox{DR}_{\bullet}^K$ indexed by $K \subseteq I$. On taking derived functors we have:
\[ \varprojlim {}^p \mbox{DR}_{\bullet} = \bigoplus \varprojlim {}^p \mbox{DR}_{\bullet}^K = \bigoplus \mbox{H}^p_{W(A)}(\Sigma, \Z[W(A)/W_K(A)]) \otimes \mbox{R}_T^K. \] 
We now proceed by considering three cases: $K = I, \varnothing$, and the remaining cases. 

\noindent
{\bf The case K=I}. Observe that $W_I(A) = W(A)$, and that $\mbox{R}_T^I$ is the subring of $\mbox{DR}_T$ generated by the $W(A)$-invariant weights $\mbox{D}_I$. Moreover, we have: 
\[ \mbox{H}^p_{W(A)}(\Sigma, \Z) = \mbox{H}^p(\Delta(n), \Z) = 0, \, \,  \mbox{if} \, \, p>0, \, \,  \mbox{and} \, \,   \Z \, \, \mbox{if} \, \, p=0. \]
Hence $\varprojlim {}^0 \mbox{DR}_{\bullet}^I = \mbox{R}_T^I$, and all higher inverse limits vanish. Now every weight in $\mbox{D}_I$ represents an element of $\Kth_{K(A)}^0(X(A))$ given by a one dimensional highest weight representation of $K(A)$, hence they represent permanent cycles. Moreover, these weights are detected under any $T$-orbit in $X(A)$. 

\medskip
\noindent
{\bf The case K=$\varnothing$}. Notice that $W_{\varnothing}= \{ 1 \}$, and that $\mbox{R}_T^{\varnothing}$ is the ideal in $\mbox{DR}_T$ generated by the weights in $\mbox{D}_+$. We have by \cite{D1,D2}:
\[  \mbox{H}^p_{W(A)}(\Sigma, \Z[W(A)]) = \mbox{H}^p_c(\Sigma, \Z) = 0, \, \,  \mbox{if} \, \, p\neq n, \, \,  \mbox{and} \, \,   \Z \, \, \mbox{if} \, \, p=n, \]
where $\mbox{H}^p_c(\Sigma, \Z)$ denotes the cohomology of $\Sigma$ with compact supports. 
Hence we deduce that $\varprojlim {}^n \mbox{DR}_{\bullet}^{\varnothing} = \mbox{R}_T^{\varnothing}$, and all other derived inverse limits vanish. 

\medskip
\noindent
{\bf The cases K $\neq \{I, \varnothing $\}}. If $K \subseteq I$ is a nontrivial proper subset of $I$, then we know that $W_K(A)$ is a finite subgroup of $W(A)$. The argument in \cite{D2} also shows that:
\[ \mbox{H}^p_{W(A)}(\Sigma, \Z[W(A)/W_K(A)]) = \mbox{H}^p_c(\Sigma/W_K(A), \Z). \] 
By \cite{D1} (Section 3), $\Sigma/W_K(A)$ is a proper retract of $\Sigma$. Hence the group: $\mbox{H}^p_c(\Sigma/W_K(A), \Z)$ is a summand within $\mbox{H}^p_c(\Sigma, \Z)$. From the previous case, recall that the latter group is trivial if $p \neq n$, and is free cyclic for $p=n$. Moreover, given any $ i \in I$, the reflection $r_i \in W(A)$ acts by reversing the sign of the generator of $\mbox{H}^n_c(\Sigma, \Z)$. By picking $i \in K$, we see that the groups $\mbox{H}^p_c(\Sigma/W_K(A), \Z)$ must be trivial for all $p$. 

\medskip
\noindent
From the above three cases we notice that the spectral sequence must collapse at $E_2$, showing that the groups $\tilde{\Kth}_{K(A)}^{*+n}(X(A))$ are isomorphic to $\mbox{R}_T^{\varnothing}[\beta^{\pm 1}]$. Since the case $J = \varnothing$ corresponds to the lowest filtration in the $E_{\infty}$ term, it can be identified with the image of the map given by pinching off the top cell: $\pi :  X(A) \rightarrow K(A)_+ \wedge_T S^n$. 
\end{proof}
 
 \section{The Geometric identification}
 \noindent
 For a compact type Kac-Moody group, we would like to give a geometric meaning to the image of $\pi$ in terms of Fredholm families. One may see this as a global Thom isomorphism theorem for Dominant K-theory (in the spirit of the local Thom isomorphism theorem developed in section 4). That such a result could hold globally, was first shown in the Affine case in \cite{FHT} using the cubic Dirac operator. We will adapt their argument to our context. 
  
\medskip
\noindent
 Let $A$ be a symmetrizable generalized Cartan matrix, and let $\gG(A)$ be the corresponding complex Lie algebra with Cartan subalgebra $\gH$. Recall the triangular decomposition:
  \[ \gG(A) = \eta_+ \oplus \eta_- \oplus \gH. \]
Let $\gG(A)^*$ denote the restricted dual of $\gG(A)$. So $\gG(A)^*$ is a direct sum of the duals of the individual (finite dimensional) root spaces. 

\noindent
Let $K_J(A)$ be a compact subgroup of $K(A)$ for some $J \in \gC(A)$, and let $\mbox{B}$ be a $K_J(A)$-invariant Hermitian inner product on $\gG(A)$. Let $\mbox{S} = \mbox{B}^{\omega}$ denote the non degenerate, symmetric, bilinear form on $\gG(A)$ given by the $\omega$-conjugate of $\mbox{B}$, defined as $\mbox{S}(x,y) = \mbox{B}(x,\omega(y))$. We identify $\gG(A)^*$ with $\gG(A)$ via S, and endow it with the induced form. The subspaces $\eta_{\pm}^*$ are S-isotropic, and can be seen as a polarization of $\gG(A)^*$ \cite{PS}. Let Clif($\gG(A)^*$) denote the complex Clifford algebra generated by $\gG(A)^*$, modulo the relation $x^2 = \frac{1}{2} S(x,x)$ for $x \in \gG(A)^*$. Let ${\mathbb S}(A)$ denote the fundamental irreducible representation of Cliff($\gG(A)^*$) endowed with the canonical Hermitian inner product, which we also denote by B. We have a decomposition of algebras: \[ \mbox{Cliff}(\gG(A)^*) = \mbox{Cliff}(\gH^*) \otimes \mbox{Cliff}(\eta_+^* \oplus \eta_-^*)  \] 
Let ${\mathbb S}(\gH^*)$ denote the fundamental irreducible $\mbox{Cliff}(\gH^*)$-module. The above decomposition of $\mbox{Cliff}(\gG(A)^*)$ induces a decomposition ${\mathbb S}(A) = {\mathbb S}(\gH^*) \otimes \Lambda^*(\eta_-) $. We have identified $\Lambda^*(\eta_-)$ with the fundamental $\mbox{Cliff}(\eta_+^* \oplus \eta_-^*)$-module, with $\eta_-^*$ acting by contraction, and $\eta_+^*$ acting by exterior multiplication (once we identify $\eta_+^*$ with $\eta_-$). 

\medskip
\noindent
As in the local case, let $\mbox{L}_\rho$ denote the irreducible representation of $K(A)$ with highest weight $\rho$. From the character formula, we see that the character of $\mbox{L}_\rho$ is given by:
\[ \mbox{ch} \,  \mbox{L}_{\rho} = e^{\rho} \mbox{ch} \, \Lambda^*(\eta_{-}). \]
It is easy to see that the Cliff($\gG(A)^*$) action on $\mathbb{S}(A)$ extends uniquely to an irreducible $K_J(A) \ltimes \mbox{Cliff}(\gG(A)^*)$-module $\hat{\mathbb{S}}(A)$, with highest weight $\rho$, given by: 
\[ \hat{\mathbb{S}}(A) = \mathbb{S}(A) \otimes \C_\rho = \mathbb{S}(\gH^*) \otimes \mbox{L}_\rho. \]
Furthermore, $\hat{\mathbb{S}}(A)$ is a unitary $K_J(A)$-representation. We now proceed to construct a complex as described in \cite{FHT} (Section 14), by twisting the Koszul-Chevalley differential with the cubic Dirac operator:

\medskip
\noindent
Given a dominant weight $\mu \in \mbox{D}$, let $\mbox{L}_{\mu}$ be the corresponding irreducible unitary $K(A)$-representation with highest weight $\mu$. Let $(\mathbb{H}_\mu, \Dirac)$ denote the complex: 
\[ \mathbb{H}_\mu = \hat{\mathbb{S}}(A)\otimes \mbox{L}_\mu =  \mathbb{S}(\gH^*) \otimes \C_\rho \otimes \Lambda^*(\eta_-) \otimes \mbox{L}_{\mu},   \quad \quad \Dirac = \mbox{K} + \partial + \partial^{\dagger}, \]
where $\mbox{K}$ denotes the dirac operator on $\gH_{\R}$ with coefficients in the the $T$-representation: $\C_{\rho} \otimes \Lambda^*(\eta_-) \otimes \mbox{L}_{\mu}$. In particular, $\mbox{K}$ is given by Clifford multiplication with $\theta$ on the isotypical summand of weight $\theta$ in $\C_{\rho} \otimes \Lambda^*(\eta_-) \otimes \mbox{L}_{\mu}$. The operator $\partial$ denotes the Koszul-Chevalley differential $\partial: \Lambda^k(\eta_-) \otimes \mbox{L}_{\mu} \rightarrow \Lambda^{k-1}(\eta_-) \otimes \mbox{L}_{\mu} $, and $\partial^{\dagger}$ its adjoint with respect to the Hermitian        inner product on $\Lambda^*(\eta_-) \otimes \mbox{L}_\mu$, induced from $\mbox{L}_\mu$, and the Hermitian inner-product on $\eta_-$ induced by B. 

\medskip
\noindent
We may give an alternate description of $\Dirac$ which will be helpful later. Let $a_i$ denote a basis of root-vectors for $\eta_-$, and let $b_i$ denote the dual basis for $\eta_+$ such that $\mbox{S}(a_m,b_n) = \delta_{m,n}$. Let $h_j$ denote the coroot-basis for $\gH_{\R}$. It is easy to see that we may write the map $\partial$ acting on $\hat{\mathbb{S}}(A)\otimes \mbox{L}_\mu$ as the formal expression (using the Einstein summation notation):
\[ \partial = \psi(a_i^*) \otimes a_i +  \frac{1}{2} \, \mbox{ad}_{a_i} \, \psi(a_i^*)  \otimes \mbox{Id} \]
where $\psi(a_i^*)$ indicates Clifford action, and $\mbox{ad}_{a_i}$ denotes the adjoint action of $\eta_-$ extended to be a derivation on $\Lambda^*(\eta_-)$. It follows that we may write the elements $\partial^{\dagger}$ and $\mbox{K}$ as:
\[ \partial^{\dagger} = \psi(b_i^*)  \otimes b_i + \frac{1}{2} \,  \psi(b_i^*) \, \mbox{ad}_{a_i}^{\dagger} \otimes \mbox{Id}, \quad \quad \mbox{K} =  \psi(h_j^*) \otimes h_j+ \psi(h_j^*) \, \mbox{ad}_{h_j} \otimes \mbox{Id} + \psi(\rho) \otimes \mbox{Id} \]
With this in mind, consider the following formal expressions:
\[ \Dirac_1 = \psi(a_i^*) \otimes a_i + \psi(b_i^*) \otimes b_i + \psi(h_j^*) \otimes h_j \quad \mbox{and} \quad \quad \, \,   \]
\[  \Dirac_2 = \frac{1}{2} \,  \mbox{ad}_{a_i} \, \psi(a_i^*)   +  \frac{1}{2} \, \psi(b_i^*) \, \mbox{ad}_{a_i}^{\dagger} + \psi(h_j^*) \, \mbox{ad}_{h_j} + \psi(\rho). \]
\begin{claim} \label{dirac}
Let $\Dirac_1$ and $\Dirac_2$ be the formal expressions as given above. Then, seen as operators on $\mathbb{H}_\mu$, they are well defined and $K_J(A)$-invariant. In particular, $\Dirac = \Dirac_1 + \Dirac_2$ is $K_J(A)$-invariant. 
\end{claim}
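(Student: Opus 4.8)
The plan is to verify two things for each of the formal expressions $\Dirac_1$ and $\Dirac_2$: first, that they are well-defined operators on $\mathbb{H}_\mu$ despite the infinite sums implicit in the Einstein summation over the (infinitely many) root vectors $a_i, b_i$; and second, that they commute with the $K_J(A)$-action. The well-definedness is the point where the compact-type (really the local, $K_J(A)$) hypothesis matters: although $\eta_\pm$ are infinite-dimensional for $K(A)$, we are working with $K_J(A)$ for $J \in \gC(A)$, so the relevant nilpotent subalgebras and their adjoint actions have good finiteness properties on each weight space of $\Lambda^*(\eta_-) \otimes \mbox{L}_\mu$. Concretely, I would argue that when applied to a vector lying in a fixed weight space, each of $\Dirac_1$ and $\Dirac_2$ involves only finitely many nonzero terms, because Clifford multiplication $\psi(a_i^*)$ together with the corresponding $a_i$ (or $\mbox{ad}_{a_i}$) shifts weights by $\mp\alpha_i$ in a way compatible with the locally finite structure, and the set of weights of $\mbox{L}_\mu$ lying in any given coset is finite; so the operators are densely defined and extend to closed operators on the Hilbert space completion $\mathbb{H}_\mu$.

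Next I would address $K_J(A)$-invariance. The cleanest route is to observe that $\Dirac = \mbox{K} + \partial + \partial^\dagger$ is manifestly $K_J(A)$-invariant by construction: $\partial$ is the Koszul--Chevalley differential for the $K_J(A)$-module $\mbox{L}_\mu$, hence equivariant; $\partial^\dagger$ is its adjoint with respect to a $K_J(A)$-invariant Hermitian metric, hence equivariant; and $\mbox{K}$ is the $T$-equivariant Dirac operator on $\gH_{\R}$, which when assembled on $\hat{\mathbb{S}}(A) \otimes \mbox{L}_\mu = \mathbb{S}(\gH^*) \otimes \mbox{L}_\rho \otimes \mbox{L}_\mu$ extends to a $K_J(A)$-invariant operator because the cubic Dirac operator construction of Kostant (used in \cite{FHT}) is $K_J(A)$-equivariant. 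Then, using the explicit rewriting of $\partial$, $\partial^\dagger$, $\mbox{K}$ in terms of the basis $a_i, b_i, h_j$ given just above the claim, one sees by inspection that $\Dirac = \Dirac_1 + \Dirac_2$, where $\Dirac_1$ collects the ``linear in the Lie algebra'' terms (the $\otimes a_i$, $\otimes b_i$, $\otimes h_j$ pieces) and $\Dirac_2$ collects the ``ad-action plus $\psi(\rho)$'' terms. So it remains to show that each of $\Dirac_1$, $\Dirac_2$ is separately $K_J(A)$-invariant.

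For that I would use a Lie-algebra infinitesimal argument together with the structure of the $\mbox{ad}$-action: the element $\Dirac_1 = \psi(a_i^*) \otimes a_i + \psi(b_i^*) \otimes b_i + \psi(h_j^*) \otimes h_j$ is, up to normalization, the image under the Clifford-action-times-Lie-algebra-action map of the canonical element of $\gG(A)^* \otimes \gG(A)$ dual to the form $\mbox{S}$; since $\mbox{S}$ is $K_J(A)$-invariant, this canonical element is $K_J(A)$-invariant, hence so is $\Dirac_1$. Equivalently, for $\xi$ in the Lie algebra of $K_J(A)$ one computes the commutator $[\xi, \Dirac_1]$ using that the Clifford action and the Lie action are both $\xi$-equivariant and that $\mbox{ad}_\xi$ preserves $\mbox{S}$, so the commutator telescopes to zero. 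Then $\Dirac_2 = \Dirac - \Dirac_1$ is automatically $K_J(A)$-invariant as well, giving the claim. The main obstacle I anticipate is the careful bookkeeping for well-definedness: one must check that the formal sums defining $\Dirac_1$ and $\Dirac_2$ really do reduce to locally finite sums on $\mathbb{H}_\mu$ and that the decomposition $\Dirac = \Dirac_1 + \Dirac_2$ of the earlier explicit formulas is valid term-by-term (in particular that the $\tfrac12 \mbox{ad}_{a_i}\psi(a_i^*)$ and $\tfrac12 \psi(b_i^*)\mbox{ad}_{a_i}^\dagger$ pieces are convergent), rather than the equivariance itself, which follows formally from invariance of $\mbox{S}$ and of the metrics involved.
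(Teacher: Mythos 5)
Your treatment of well-definedness (local finiteness of the formal sums on each weight space of $\mathbb{H}_\mu$) and your argument for the invariance of $\Dirac_1$ (it is the contraction of the $\mbox{S}$-dual canonical element of $\gG(A)^*\otimes\gG(A)$ against the two equivariant maps $\psi$ and $\sigma$) are both fine and agree with the paper. The gap is the step where you declare that $\Dirac = \mbox{K}+\partial+\partial^{\dagger}$ is ``manifestly'' $K_J(A)$-invariant and then deduce invariance of $\Dirac_2 = \Dirac-\Dirac_1$ by subtraction. None of the three summands $\mbox{K}$, $\partial$, $\partial^{\dagger}$ is separately $K_J(A)$-equivariant, and their sum is not manifestly so: the Koszul--Chevalley differential $\partial$ is equivariant only for $\gB_- = \gH\oplus\eta_-$, acting on $\Lambda^*(\eta_-)$ by the exterior extension of the adjoint action, whereas the Lie algebra of $K_J(A)$ is not contained in $\gB_-$ and its action on $\Lambda^*(\eta_-)\subset\hat{\mathbb{S}}(A)$ is the spin action, not the naive adjoint one (indeed $\mbox{ad}_{b_j}$ for $j\in J$ does not even preserve $\eta_-$). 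Likewise $\partial^{\dagger}$ is equivariant only for $\gB_+$, and $\mbox{K}$ only for $T$; and appealing to the equivariance of Kostant's cubic Dirac operator is circular, since the equivariance of the cubic Dirac operator in this setting is precisely what the claim asserts. So your argument assumes the hard part of the statement.

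What is actually needed --- and what the paper does --- is a direct proof that $\Dirac_2$ (the cubic term plus $\psi(\rho)$, acting on $\hat{\mathbb{S}}(A)$ alone) is invariant. The paper sets $\mbox{E}(\tau) = [\Dirac_2,\psi(\tau)]$ (graded commutator) and proves, by an explicit structure-constant computation in the lemma that follows, the identity $[\sigma(a),\mbox{E}(\tau)] = \mbox{E}(\mbox{ad}_a^*(\tau))$ for $a$ in the Lie algebra of $K_J(A)$. By the Jacobi identity this is equivalent to $[[\Dirac_2,\sigma(a)],\psi(\tau)]=0$ for all $\tau\in\gG(A)^*$, so $[\Dirac_2,\sigma(a)]$ commutes with the whole Clifford algebra and is therefore a scalar on the irreducible module $\hat{\mathbb{S}}(A)$; evaluating on a highest weight vector shows the scalar is zero. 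To complete your proposal you must supply some version of this computation (or of Kostant's); there is no formal shortcut through the decomposition $\mbox{K}+\partial+\partial^{\dagger}$.
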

\begin{proof}
Working with an explicit basis of elementary tensors in $\Lambda^*(\eta_-)$, it is easy to see that $\Dirac_1$, and $\Dirac_2$ are well defined.  Invariance for $\Dirac_1$ is also obvious. It remains to show that $\Dirac_2$ is $K_J(A)$-invariant, seen as an operator acting on $\hat{\mathbb{S}}(A)$.

\noindent
Define an operator $\mbox{E}$ on $\gG(A)^*$, with values in $\mbox{End}(\hat{\mathbb{S}}(A))$ as a graded commutator:
\[ \mbox{E} : \gG(A)^* \longrightarrow \mbox{End}(\hat{\mathbb{S}}(A)), \quad \mbox{E}(\tau) =  [\Dirac_2, \psi(\tau)] = \Dirac_2 \, \psi(\tau) + \Dirac_2 \, \psi(\tau), \]
where we take the graded commutator since both operators are odd. 

\noindent
Now let $\sigma$ denote the action of the Lie algebra of $K_J(A)$ on $\hat{\mathbb{S}}(A)$, and let $a$ be any element of this Lie algebra.
We proceed to show the equality (compare \cite{FHT}):
\[  [\sigma(a), \mbox{E}(\tau)] = \mbox{E}(\mbox{ad}_{a}^*(\tau)). \]
Before we prove this equality, notice that it is equivalent to $[[\Dirac_2, \sigma(a)], \psi(\tau)] = 0$ for all $\tau \in \gG(A)^*$. Hence $[\Dirac_2, \sigma(a)]$ is a scalar operator. By checking on a highest weight vector, we see that $[\Dirac_2,\sigma(a)] = 0$, or that $\Dirac_2$ is $K_J(A)$-invariant. Hence, all we need to do is to prove the above equality, which we shall prove in the next lemma.

\end{proof}

\begin{lemma}
Let $\mbox{E}$ be defined as the function:
\[ \mbox{E} : \gG(A)^* \longrightarrow \mbox{End}(\hat{\mathbb{S}}(A)), \quad \mbox{E}(\tau) =  [\Dirac_2, \psi(\tau)] = \Dirac_2 \, \psi(\tau) + \Dirac_2 \, \psi(\tau), \]
and let $a$ be an element in the Lie algebra of $K_J(A)$. Then $[\sigma(a), \mbox{E}(\tau)] = \mbox{E}(\mbox{ad}_{a}^*(\tau))$, where $\sigma$ denotes the action of $K_J(A)$ on $\hat{\mathbb{S}}(A)$.
\end{lemma}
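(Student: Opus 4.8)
The plan is to verify the identity $[\sigma(a),\mathrm{E}(\tau)] = \mathrm{E}(\mathrm{ad}_a^*(\tau))$ by a direct computation, exploiting bilinearity in $a$ to reduce to the case where $a$ is a root vector (or a Cartan element), and exploiting the fact that both sides are linear in $\tau$ to reduce to $\tau$ being a dual basis vector $a_i^*$, $b_i^*$, or $h_j^*$. First I would record the basic commutation relations between the Clifford action $\psi$, the adjoint action $\mathrm{ad}$ (extended as a derivation on $\Lambda^*(\eta_-)$), its adjoint $\mathrm{ad}^\dagger$, and the representation $\sigma(a)$ on $\hat{\mathbb S}(A)$. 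The key structural facts are: (i) $\sigma(a)$ acts on the $\mathbb S(\gH^*)$ factor through Clifford multiplication by (the image of) $a$ under $\mathrm{ad}$ restricted to $\gH$, and on the $\Lambda^*(\eta_-)\otimes\mathrm{L}_\rho$ factor through $\mathrm{ad}_a$; (ii) $[\psi(\tau),\psi(\tau')]$ is the scalar $S(\tau,\tau')$; (iii) $\sigma$ is a Lie algebra homomorphism, so $[\sigma(a),\psi(\tau)] = \psi(\mathrm{ad}_a^*\tau)$ on the relevant Clifford factors. These are the same manipulations used in \cite{FHT} (Section 14) for the affine case, and they go through verbatim here since $K_J(A)$ is a genuine compact Lie group and $\gG(A)$ decomposes into finite-dimensional root spaces.

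Next I would expand $\mathrm{E}(\tau) = [\Dirac_2,\psi(\tau)]$ term by term using the explicit formula
\[ \Dirac_2 = \tfrac12\,\mathrm{ad}_{a_i}\,\psi(a_i^*) + \tfrac12\,\psi(b_i^*)\,\mathrm{ad}_{a_i}^\dagger + \psi(h_j^*)\,\mathrm{ad}_{h_j} + \psi(\rho), \]
computing the graded commutator of each summand with $\psi(\tau)$. The purely Clifford term $[\psi(\rho),\psi(\tau)]$ contributes a scalar, which is annihilated by $[\sigma(a),-]$ and correspondingly $\mathrm{ad}_a^*(\tau)$ paired against $\rho$ reproduces it — so the $\psi(\rho)$ term must be handled with care to see that both sides match (this is exactly where the Weyl element $\rho$ enters the cubic Dirac operator story). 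For the remaining three terms, commuting $\psi(\tau)$ past $\psi(a_i^*)$, $\psi(b_i^*)$, $\psi(h_j^*)$ produces a Clifford piece times an $\mathrm{ad}$-derivation piece, plus scalar contractions $S(\tau,a_i^*)$, $S(\tau,b_i^*)$, $S(\tau,h_j^*)$ which collapse the sum over $i$ (resp. $j$) by the duality $S(a_m,b_n)=\delta_{m,n}$. The upshot should be that $\mathrm{E}(\tau)$ is, up to a scalar, the operator $\psi(\tau)$-twisted derivation $[\mathrm{ad}\text{-part},\psi(\tau)]$, and then $[\sigma(a),\mathrm{E}(\tau)]$ is computed using the Jacobi identity for $\mathrm{ad}$ and the homomorphism property of $\sigma$, yielding $\mathrm{E}(\mathrm{ad}_a^*\tau)$.

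The main obstacle I anticipate is bookkeeping: keeping track of signs from the $\Z/2$-grading in the graded commutators, and correctly identifying how $\sigma(a)$ splits across the tensor factors $\mathbb S(\gH^*)\otimes\C_\rho\otimes\Lambda^*(\eta_-)$ so that $[\sigma(a),\psi(\tau)]$ and $[\sigma(a),\mathrm{ad}_{a_i}]$ combine to give precisely $\mathrm{ad}_a^*$ applied in the right place. In particular one must check that the cross-terms — where $\sigma(a)$ hits the $\psi$-factor of one summand of $\Dirac_2$ while $\mathrm{ad}$-acting on the derivation factor of another — cancel in pairs; this cancellation is the genuine content of the claim that $\Dirac_2$ is $K_J(A)$-invariant, and it is essentially the assertion that the "cubic" correction term in the Kostant cubic Dirac operator is equivariant. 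I would organize the proof so that, after reducing to basis elements, these cancellations are displayed explicitly in one or two aligned computations, then invoke the reduction in Claim \ref{dirac} (namely that $[\Dirac_2,\sigma(a)]$ commutes with all $\psi(\tau)$, hence is scalar, hence zero by evaluation on a highest weight vector) to conclude.
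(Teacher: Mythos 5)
Your plan matches the paper's proof in all essentials: both reduce by linearity to the dual basis vectors $b_i^*$, $h^*$ (with the remaining cases by adjoints), expand $\mathrm{E}(\tau)=[\Dirac_2,\psi(\tau)]$ explicitly in Clifford generators, identify the quadratic part of $\mathrm{E}(\tau)$ with the adjoint action of the dual element on $\gB_-$ so that the commutator with $\sigma(a)$ is governed by the Jacobi identity, and then check that the leftover scalar terms coming from $\psi(\rho)$ and the contractions $S(\cdot,\cdot)$ cancel --- the paper does this via the identification $\gB_-^*\hat\otimes\gB_-^*\cong\gB_-\hat\otimes\gB_-^*$ and the explicit cancellation $\rho(\bar\alpha_0)=\tfrac12 S(\alpha_0,\alpha_0)$, which is exactly the $\rho$-bookkeeping you flag. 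One caution: your final step, ``invoke the reduction in Claim \ref{dirac}\ldots to conclude,'' inverts the logical order --- that reduction (scalar-ness of $[\Dirac_2,\sigma(a)]$, then vanishing on a highest weight vector) is a \emph{consequence} of this lemma, not an input to it, and using it inside the lemma's proof would be circular since $[[\Dirac_2,\sigma(a)],\psi(\tau)]=0$ is, via Jacobi and $[\sigma(a),\psi(\tau)]=\psi(\mathrm{ad}_a^*\tau)$, equivalent to the identity being proved. Your explicit basis computations must therefore stand on their own, which they do once carried out as in the paper.
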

\begin{proof}
Let us begin by expressing some structure constants using the summation convention:
\[ [ a_i, a_s] = \mbox{A}_{i,s}^k \, a_k, \quad [h_j, a_t] = -\alpha_t(h_j) \, a_t, \]
where $\alpha_t$ is the positive root corresponding to $a_t$. It follows that $\mbox{ad}_{a_i}$, $\mbox{ad}_{a_i}^{\dagger}$ and $\mbox{ad}_{h_j}$ may be written as:
\[ \mbox{ad}_{a_i} = \mbox{A}_{i,s}^k \, \psi(b_k^*) \, \psi(a_s^*), \quad \mbox{ad}_{a_i}^{\dagger} = \mbox{A}_{i,s}^k \, \psi(b_s^*) \, \psi(a_k^*), \quad \mbox{ad}_{h_j} = -\alpha_t(h_j) \, \psi(b_t^*) \, 
\psi(a_t^*). \]
We may therefore express $\Dirac_2$ in terms of the Clifford generators as:
\[ \Dirac_2 = \frac{1}{2}\mbox{A}_{i,s}^k \, \{ \psi(b_k^*) \, \psi(a_s^*) \, \psi(a_i^*) + \psi(b_i^*) \, \psi(b_s^*) \, \psi(a_k^*) \} - \alpha_t(h_j) \, \psi(h_j^*) \, \psi(b_t^*) \, \psi(a_t^*) + \psi(\rho). \]
From this description, it is easy to calculate the value of $\mbox{E}$ for the generators of $\gG(A)^*$: 
\[ \mbox{E}(b_i^*) =  \mbox{A}_{i,s}^k \, \psi(b_k^*) \, \psi(a_s^*) + \psi(b_i^*) \, \psi(\alpha_i), \quad \mbox{E}(h^*) = - \mbox{S}(\alpha_i,h^*) \, \psi(b_i^*) \, \psi(a_i^*) + \rho(\bar{h}), \]
where $\bar{h} \in \gH$ is the element dual to $h^*$ via $\mbox{S}$. 

\medskip
\noindent
Now let $\alpha_0$ be a simple root in the subset of roots corresponding to the Lie group $K_J(A)$. Let  $h_0$ be the corresponding co-root.  
Notice that we may express the elements $[b_0,a_i]$, and $[b_0,h]$ in the form:
\[ [b_0,a_i] = \delta_{i,0} \mbox{A} \, h_0 + \mbox{B}_{i,0}^k \, a_k, \quad [b_0, h] = - \alpha_0(h) \, b_0, \quad \mbox{where} \quad \mbox{A} =  2/\mbox{S}(h_0,h_0).  \]

\noindent
Similarly, we may write $\mbox{ad}_{b_0}^*(b_i^*)$ and $\mbox{ad}_{b_0}^*(h^*)$ in the form:
\[  \mbox{ad}_{b_0}^*(b_i^*) = \delta_{i,0} \, \alpha_0 + \mbox{B}_{i,0}^k \, b_k^*, \quad  \mbox{ad}_{b_0}^*(h^*) =  - h^*(h_0) \, \mbox{A} \, a_0^*.  \]
Let us identify $\gB_-^* \hat{\otimes} \gB_-^*$, with $\gB_- \hat{\otimes} \gB_-^*$  by appling duality on the first factor. We express the quadratic homogeneous part in the expressions for $\mbox{E}(b_i^*)$ and $\mbox{E}(h^*)$ under this identification. These expressions transforms into the formal expressions in $\mbox{End}(\gB_-) = \gB_- \hat{\otimes} \gB_-^*$:
\[ \tilde{\mbox{E}}(b_i^*) = \mbox{A}_{i,s}^k \, a_k \otimes a_s^* + a_i \otimes \alpha_i, \quad \tilde{\mbox{E}}(h^*) = - \mbox{S}(\alpha_i,h^*) \, a_i \otimes a_i^*. \]
Notice that $\tilde{\mbox{E}}(b_i^*)$ represents the formal expression for $\mbox{ad}_{a_i}$ acting on $\gB_-$. Similarly, $\tilde{\mbox{E}}(h^*)$ represents $\mbox{ad}_{\bar{h}}$.
\noindent
Taking the commutator with $\sigma(b_0)$, and isolating the part in $\gB_-^* \hat{\otimes} \gB_-$:
\[ [\sigma(b_0), \tilde{\mbox{E}}(b_i^*)] = \tilde{\mbox{E}}([b_0,a_i]^*) + \delta_{i,0} \, \mbox{A} \, h_0 \otimes \alpha_0. \]
Reversing the identification and reducing the above equality to an equality of operators on $\hat{\mathbb{S}}(A)$, we see that the expression $[\sigma(b_0), \tilde{\mbox{E}}(b_i^*)]$ reduces to $[\sigma(b_0), \mbox{E}(b_i^*)]$, while the expression $\tilde{\mbox{E}}([b_0,a_i]^*)$ reduces to $\mbox{E}(\mbox{ad}_{b_0}^*(a_i^*)) - \rho(\bar{\alpha}_0)\, \mbox{Id}$. Finally, the term $\delta_{i,0} \, \mbox{A} \, h_0 \otimes \alpha_0$ reduces to the operator $\frac{1}{2} \delta_{i,0} \, \mbox{S}(\alpha_0,\alpha_0) \, \mbox{Id}$. On the other hand $\bar{\alpha}_0$ is $\frac{1}{2} \mbox{S}(\alpha_0,\alpha_0) \, h_0$, and so $\rho(\bar{\alpha}_0) = \frac{1}{2} \delta_{i,0} \, \mbox{S}(\alpha_0,\alpha_0)$. 

\medskip
\noindent
Putting this together, we have shown that $[\sigma(b_0), \mbox{E}(b_i^*)] = \mbox{E}(\mbox{ad}_{b_0}^*(b_i^*))$. To complete the lemma, one needs to consider the cases with $b_0$ replaced by $h_0, a_0$, and $b_i^*$ replaced by $h_i^*$ (the remaining cases follow on taking adjoints). These cases are much easier, and the computation is left to the reader.

\end{proof}

\noindent
{\bf A parametrized version of the cubic Dirac operator}:

\noindent

\noindent
To apply this to Dominant K-theory, we need a parametrized version of the above operator. We begin with a few preliminary observations: Let $\gG'(A)$ denoted the derived Lie algebra $\gG'(A) = [\gG(A), \gG(A)]$. There is a (split) short exact sequence of Lie algebras:
 \[ 0 \longrightarrow \gG'(A) \llra{i} \gG(A) \longrightarrow \mbox{t} \longrightarrow 0,  \]
 where $\mbox{t}$ is the quotient of the abelian Lie algebra $\gH$ by the subalgebra $\tilde{\gH}$ generated by the coroots: $ \tilde{\gH} = \oplus_i \C h_i$.  Let $\mbox{t}_{\R}$ denote the real form inside $\mbox{t}$. 
 
 \noindent
 Since we are working with a generalized Cartan matrix of compact type, the space $|\gC(A)|$ is equivalent to the Barycentric subdivision of the $n$-simplex $\mathcal{B}\Delta(n)$.  Define an affine map from $\mathcal{B} \Delta(n)$ to $\gG(A)^*$ by sending the Barycenter corresponding to a proper subset $J \subset I$ to the element $\rho_{I-J} \in \gH^*$, where we recall that for $K \subseteq I$, the elements $\rho_K$ are defined as:
\[ \rho_K = \sum_{i \in K} h_i^*. \]
This restricts to an affine map: $\mbox{f} : |\gC(A)| \rightarrow \gG(A)^*$. Since the stabilizer of the element $\rho_{I-J}$ is exactly the subgroup $K_J(A)$, we see that the map $\mbox{f}$ extends $K(A)$- equivariantly to a  proper embedding:
\[ \mbox{f} : X(A)   \longrightarrow \gG(A)^*.  \]
Furthermore, we may extend the map $\mbox{f}$ to an proper equivariant embedding:
\[ \mbox{F} : \mbox{t}_{\R}^* \times \R_+ \times X(A) \longrightarrow \gG(A)^*, \quad (t,r,x) \longmapsto t+r\, \mbox{f}(x),  \]
where $\R_+$ denotes the positive reals. Let $Y(A)$ denote the space $\mbox{t}_{\R}^* \times \R_+ \times X(A)$. Since $\tilde{\Kth}_{K(A)}^r(Y(A))$ is canonically isomorphic to $\tilde{\Kth}_{K(A)}^n(X(A))$, where $r$ is the rank of $K(A)$, we may as well give a geometric description of the former, which is slightly more convenient. 

\medskip
\noindent
Notice that the fundamental domain for the $K(A)$-action on $Y(A)$ is given by the subspace $\mbox{t}_{\R}^* \times \R_+ \times |\gC(A)|$. Indeed, the image of $\mbox{t}_{\R}^* \times \R_+ \times |\gC(A)|$ under the map $\mbox{F}$ belongs to the Weyl chamber $\mbox{C}$.

\medskip
\noindent
Now let $\mu + \rho \in \mbox{D}_+$ be dominant regular weight. We construct a Dominant Fredholm family over $Y(A)$, with the underlying vector space given by the product: $\mathbb{H}_{\mu} \times Y(A)$. Choose a continuous family $\mbox{B}_y$, of Hermitian inner-products on $\gG(A)$,  parametrized by points $y$ in the space  $\mbox{t}_{\R}^* \times \R_+ \times |\gC(A)|$. We require that $\mbox{B}_y$ is invariant under the stabilizer (in $K(A)$), of the point $y$. Since all stabilizers are compact Lie groups, it is easy to see that we may choose such a family, and that any two choices are equivariantly homotopic (see the Appendix for details). The $\omega$-conjugates $\mbox{S}_y = \mbox{B}^{\omega}_y$ form a continuous family of non-degenerate, symmetric, bilinear forms on $\gG(A)$. 

\medskip
\noindent
For a point $y$ in the fundamental domain, let $\lambda \in \mbox{C}$ be the element $\mbox{F}(y)$. Define the operator at $y$, to be $\Dirac_\lambda = \Dirac - \psi(\lambda)$, where $\psi(\lambda)$ denotes Clifford multiplication, with respect to the Clifford algebra generated under the relation $x^2 = \frac{1}{2}\mbox{S}_y(x,x)$. Here $\Dirac$ refers to the operator with symbol $\Dirac_1 + \Dirac_2$ defined earlier. By \ref{dirac}, we see that the operator $\Dirac$ is invariant under the stabilizer of $y$. Hence $\Dirac_\lambda$ is also fixed by stabilizer of the point $y$, and so we may therefore use the $K(A)$-action to extend the Fredholm family to all of $Y(A)$.  

\begin{claim}
Let $\mu \in \mbox{D}$ be a dominant weight. Then the family $(\mathbb{H}_{\mu}, \Dirac_{\lambda})$ is equivalent to the pullback of the character $\mu + \rho$ under the pinch map:
\[ \pi^* :  \mbox{DR}_T = \tilde{\Kth}^r_{K(A)}(K(A)_+\wedge_T S^r) \longrightarrow \tilde{\Kth}^r_{K(A)}(Y(A)). \] 
\end{claim}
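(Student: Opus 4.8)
\noindent The plan is to follow the method of \cite{FHT} (Section 14): first show that the family $(\mathbb{H}_{\mu}, \Dirac_{\lambda})$ is pulled back along the pinch map $\pi$ from a class $c$ in $\mbox{DR}_T = \tilde{\Kth}^r_{K(A)}(K(A)_+ \wedge_T S^r)$, and then identify $c$ with $e^{\mu+\rho}$ by a local computation along a $T$-slice.

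The first step is to locate the degenerate set of the family. The map $\pi$ collapses, firstly, the non-compact ends of the factors $\mbox{t}_{\R}^*$ and $\R_+$, where $\lambda = \mbox{F}(y)$ goes to infinity and $\Dirac_{\lambda}^2$ grows like $\mbox{S}_y(\lambda,\lambda)$ (so the operator is invertible there), and secondly the non-free locus of $Y(A)$ --- the subspace on which the $K(A)$-isotropy strictly contains $T$, which is exactly the set of points $y$ whose image $\lambda = \mbox{F}(y)$ lies on a wall of the Weyl chamber $\mbox{C}$. Hence, to show the family descends along $\pi$ it suffices to show $\Dirac_{\lambda} = \Dirac - \psi(\lambda)$ is invertible when $\lambda$ lies on a wall. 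For this I would establish a Parthasarathy-type identity. Expanding
\[ \Dirac_{\lambda}^2 = \Dirac^2 - \{\Dirac, \psi(\lambda)\} + \frac{1}{2}\, \mbox{S}_y(\lambda, \lambda), \]
evaluating the graded commutator $\{\Dirac, \psi(\lambda)\}$ via Claim \ref{dirac} and the explicit Clifford expressions for $\Dirac_1$ and $\Dirac_2$, and combining with a Kostant-type computation of $\Dirac^2$ (a Casimir operator plus a scalar), one finds that $\Dirac_{\lambda}^2$ is bounded below by a positive constant unless $\lambda$ lies in the $W(A)$-orbit of $\mu + \rho$. Since $\mu \in \mbox{D}$, the weight $\mu + \rho$ lies in $\mbox{D}_+$ and is hence regular, so its $W(A)$-orbit meets no wall; therefore $\Dirac_{\lambda}$ is invertible over the non-free locus. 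Equivariance then shows that the family, as a map into the space of Fredholm operators, carries the collapsed subspace into the contractible space of invertibles, so it factors through the cofiber: $[(\mathbb{H}_{\mu}, \Dirac_{\lambda})] = \pi^*(c)$ for a unique class $c$.

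The second step is to identify $c$ by localizing at the orbit through the point $y_0$ with $\mbox{F}(y_0) = \mu + \rho$. Regularity of $\mu + \rho$ forces the isotropy of $y_0$ to be exactly $T$, so near $y_0$ the fundamental domain maps by $\mbox{F}$ diffeomorphically onto a neighborhood of $\mu + \rho$ in $\gH_{\R}^*$, and the degenerate set of the family is precisely this single orbit $K(A)/T$. On the highest weight line of $\mathbb{H}_{\mu}$ --- the tensor product of $\mathbb{S}(\gH^*)$, $\C_{\rho}$, $\Lambda^0(\eta_-)$, and the highest weight line of $\mbox{L}_{\mu}$, which as a $T$-representation is $\mathbb{S}(\gH^*) \otimes \C_{\mu+\rho}$ --- one checks that $\partial$ and $\partial^{\dagger}$ vanish (the highest weight vector lies in $\Lambda^0(\eta_-)$ and is annihilated by the raising operators), while $\mbox{K}$ acts by Clifford multiplication by the weight $\mu + \rho$; hence $\Dirac_{\lambda}$ restricts on this line to $\psi(\mu + \rho - \lambda)$. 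By the estimate of the first step, $\ker \Dirac_{\mu+\rho}$ is exactly this line, and the standard finite-dimensional reduction for a Fredholm family near its degenerate set identifies the restriction of $(\mathbb{H}_{\mu}, \Dirac_{\lambda})$ to a $T$-slice through $y_0$ with the Bott family over $\gH_{\R}^*$ twisted by the $T$-representation $\mathbb{S}(\gH^*) \otimes \C_{\mu+\rho}$; this represents $e^{\mu+\rho}$, up to a power of the Bott class $\beta$, in $\tilde{\Kth}^r_T(S^{\gH_{\R}^*}) = \mbox{DR}_T[\beta^{\pm 1}]$. Comparing with the restriction of $\pi^*(c)$ to the same slice forces $c = e^{\mu+\rho}$, and under the identification of $\mbox{R}_T^{\varnothing}$ with the Grothendieck group of highest weight representations this says exactly that $(\mathbb{H}_{\mu}, \Dirac_{\lambda})$ represents $\pi^*(\mu + \rho)$.

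I expect the main obstacle to be the Parthasarathy--Kostant identity for $\Dirac_{\lambda}^2$, and the uniform spectral gap it must provide in the Kac-Moody setting: one has to control the cubic Dirac operator on the infinite-dimensional module $\mathbb{H}_{\mu}$ well enough to obtain simultaneously the Fredholm property and a uniform lower bound off the orbit of $\mu + \rho$, compatibly with the varying inner products $\mbox{B}_y$ supplied by the Appendix. Granting this analytic input, the descent along $\pi$ and the slice identification are formal.
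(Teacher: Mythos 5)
Your proposal is correct in outline, but it takes a noticeably different route from the paper's. The paper's proof is essentially algebraic: it observes that $\partial + \partial^{\dagger}$ and $\mbox{K}_{\lambda} = \mbox{K} - \psi(\lambda)$ (anti)commute, so that the Fredholm family compresses onto $\ker(\partial + \partial^{\dagger})$, and then invokes Kostant's theorem for symmetrizable Kac--Moody algebras (via \cite{Ku}) to identify that kernel as $\sum_{w \in W(A)} (-1)^w\, \mathbb{S}(\gH^*) \otimes \C_{w(\mu+\rho)}$. On this complex $\mbox{K}_{\lambda}$ is visibly Clifford multiplication by $\theta - \lambda$ on the weight-$\theta$ piece, so the support is the $W(A)$-orbit of $\mu+\rho$, which meets the fundamental domain only at $\mu+\rho$ itself, and the class is read off globally as the Thom class $\mathbb{S}(\gH^*)$ times $e^{\mu+\rho}$ --- no separate local index computation is needed. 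Your route instead establishes invertibility of $\Dirac_{\lambda}$ off the single orbit via a Parthasarathy-type lower bound on $\Dirac_{\lambda}^2$, and then identifies the class by a finite-dimensional reduction at the regular point $\mbox{F}(y_0) = \mu+\rho$. This works, but note that the spectral-gap estimate you flag as the main obstacle is, in the end, the same structural fact the paper exploits: once the two odd operators anticommute, $\Dirac_{\lambda}^2 = (\partial+\partial^{\dagger})^2 + \mbox{K}_{\lambda}^2$, and the gap follows from discreteness of the weight lattice together with Kostant's computation of $\ker(\partial+\partial^{\dagger})$ (the scalar value of $\Dirac_{\lambda}^2$ on isotypical summands is the content of \cite{Ku}, Section 3.4, cited in the remark following the claim). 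So what your approach costs --- proving the uniform lower bound with respect to the auxiliary positive metrics $\mbox{B}_y$, plus the local slice identification --- the paper's approach buys outright from Kostant's theorem; what your approach buys is independence from the explicit homology computation and a template that would apply to families not of Koszul type.
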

\begin{proof}
Since the operators $\partial + \partial^{\dagger}$ and $\mbox{K}_{\lambda} = \mbox{K} - \psi(\lambda)$ commute, we may calculate the support of this family in two steps. Firstly, we calculate the kernel of $\partial + \partial^{\dagger}$ which is \cite{Ku}:
\[ \sum_{w \in W(A)} (-1)^w \, \mathbb{S}(\gH^*) \otimes \C_{w(\mu + \rho)} ,  \]
where the sign of a vector space indicates its parity. It is now clear that the operator $\mbox{K}_{\lambda}$ acting on this complex is supported at the $W$-orbit of $\mu + \rho$. Since $\mathbb{S}(\gH^*)$ represents the Thom class in ${}^A\tilde{\Kth}_T^r(S^r) = \tilde{\Kth}^r_{K(A)}(K(A)_+\wedge_T S^r)$, this operator is induced by the image of the character $\mu + \rho$.

\end{proof}

\begin{remark} The operator $\Dirac_{\lambda}$ is not a bounded operator. We rectify this as follows: 

\noindent
First notice that by construction, $\Dirac_{\lambda}$ commutes with the $T$-action, and therefore it has a weight space decomposition as a direct sum of operators acting on the individual finite-dimensional isotypical summands in $\mathbb{H}_{\mu}$. 

\noindent
Next, taking a regular element $h^* \in \gH_{\R}^*$, the equality $[\Dirac, \psi(h^*)] = \mbox{E}(h^*)$, and the formula for $\mbox{E}(h^*)$, implies that $\Dirac_{\lambda}$ has a discrete spectrum. Furthermore, one can explicitly compute $\Dirac_{\lambda}^2$ on the isotypical summands (see \cite{Ku} Section 3.4). It follows easily that $(1+\Dirac_{\lambda}^2)^{-1}$ is a well defined, continuously varying family of compact operators, in the norm topology. 

\noindent
Similarly, one can show that $\Dirac_{\lambda}(1+\Dirac_{\lambda}^2)^{-1/2}$ is a continuously varying family of bounded operators, in the compact open topology. It now follows, as in \cite{AS}, that $(\mathbb{H}_\mu, \Dirac_{\lambda})$ may be suitable normalized to define a continuous family of operators with values in $\mathcal{F}(\mathcal{H})$. 
\end{remark}

\section{Dominant K-theory for the extended compact type}
\noindent
Thus far, we have restricted our attention to Kac-Moody groups of compact type because this assumption greatly simplified the Dominant K-theory of $X(A)$. Let us now explore the behaviour of Dominant K-theory for generalized Cartan matrices that are one level up in terms of complication:

\noindent
Recall that a generalized Cartan matrix $A$ is of extended compact type if there exists a decomposition $I = I_0 \coprod J_0$, such that $(a_{i,j})_{j,k \in J}$ is of non-finite type if and only if $I_0 \subseteq J$.
Before we begin the analysis of the extended compact type, let us prove a simple claim about the Davis complex, which will help unify the argument. We begin with a definition:

\begin{defn} \label{ext} Let $W$ be an arbitrary Coxeter group on a generating set $I$. Let $J,K \subseteq I$ be any subsets. Define the set of pure double coset representatives: ${}^J \overline{W} {}^K$:
\[ {}^J\overline{W}{}^K = \{ w \in {}^J W {}^K \, | \, \, w\, W_K(A) \, w^{-1} \, \cap W_J(A) = 0 \}. \]
Similarly, define the set of (right) maximally pure elements ${}^J \overline{W}{}^K_s$:
\[ {}^J\overline{W}{}^K_s = \{ w \in {}^J \overline{W}{}^K \, | \, \, w \notin {}^J\overline{W}{}^L \,\, \mbox{for all proper inclusions} \,\, K \subset L \}. \]

\end{defn}

\begin{claim} Assume that $A$ is a generalized Cartan matrix of extended compact type as defined above. Assume that $|I_0| = n+1$, for some integer $n >1$. Let $K \subseteq I$ be any subset, then the groups : $\mbox{H}^p_c(\Sigma/W_K(A),\Z)$ are trivial if $p \neq \{0,n\}$, and  
\[ \mbox{H}_c^n(\Sigma/W_K(A),\Z) = \Z [ {}^K \overline{W} {}^{I_0}_s].  \]
Moreover, $\mbox{H}^0_c(\Sigma/W_K(A),\Z)$ is nonzero if and only if $\Sigma/W_K(A)$ is compact and contractible, in particular for this case ${}^K \overline{W} {}^{I_0}_s$ is empty. 
\end{claim}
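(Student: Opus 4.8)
The plan is to mimic the proof of the previous claim (for $\hat{\Sigma}_0$), but with $\hat{\Sigma}_0$ replaced by $\Sigma_0$, and keeping careful track of which double cosets actually contribute. First I would decompose $W(A)/W_K(A)$ under the left $W(A_0)$-action exactly as before:
\[ W(A)/W_K(A) = \coprod_{w \in {}^K W {}^{I_0}} W(A_0)/W_{K_w}(A), \qquad W_{K_w}(A) = W(A_0)\cap w^{-1} W_K(A) w, \]
and observe that $\Sigma_0/W_K(A)$ inherits a corresponding decomposition into pieces of the form $\Sigma(A_0)/W_{K_w}(A)$ — but where now, because $\Sigma_0$ uses the quotient by $W_{J\cup J_0}(A)$ rather than a free $W(A)\times_{W(A_0)}$ construction, the identifications glue these pieces together along faces indexed by subsets $J\subseteq I_0$ with $J\cup J_0$ spherical. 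So the right model is: $\Sigma_0$ is a homotopy colimit over $\gC(A_0)$ of the functor $J \mapsto W(A)/W_{J\cup J_0}(A)$, and its compactly-supported cohomology is computed by the same spectral sequence / inverse-limit machinery used in Section 5 and in the proof of Theorem \ref{thm1}.

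Next I would run the case analysis of Theorem \ref{thm1} in this relative setting. For each $w \in {}^K W{}^{I_0}$ one gets a summand governed by $\mbox{H}^p_c(\Sigma(A_0)/W_{K_w}(A),\Z)$ glued according to how $w$ interacts with the spherical subsets of $I_0$. By Section 5, $\mbox{H}^p_c(\Sigma(A_0)/W_{K_w}(A),\Z)$ is nonzero only for $p\in\{0,n\}$: it is $\Z$ in degree $n$ exactly when $K_w=\varnothing$ (equivalently $w$ is pure for the pair $(I_0,K)$, i.e. $w W_{I_0}(A) w^{-1}\cap W_K(A)=0$), and $\Z$ in degree $0$ exactly when $K_w = I_0$ (equivalently $\Sigma(A_0)/W_{K_w}(A)$ is the point). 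The extra subtlety compared to the $\hat\Sigma_0$ claim is that gluing along the faces $J\subsetneq I_0$ can kill some of the degree-$n$ classes: a pure $w$ still gets identified with pure representatives of the pairs $(L,K)$ for $J\subseteq L\subseteq I_0$ whenever $w$ remains pure there, and the top cohomology survives only for those $w$ that are \emph{maximally} pure, i.e. $w\in{}^K\overline{W}{}^{I_0}_s$. This is precisely the reason Definition \ref{ext} introduces ${}^K\overline{W}{}^{I_0}_s$, and it is why the hypothesis $n>1$ is needed — for $n=1$ the relevant faces are empty/zero-dimensional and the argument degenerates. So I would identify $\mbox{H}^n_c(\Sigma_0/W_K(A),\Z) = \Z[{}^K\overline{W}{}^{I_0}_s]$ and show all the intermediate groups $\mbox{H}^p_c$, $0<p<n$, vanish because each contributing summand is, as in Section 5, a proper retract summand of $\mbox{H}^*_c(\Sigma(A_0),\Z)$ which is concentrated in degrees $0$ and $n$, and the reflection-sign argument (pick $i\in K_w$) kills the degree-$n$ part whenever $K_w\neq\varnothing$.

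Finally, for the degree-$0$ statement: $\mbox{H}^0_c$ of any space is nonzero iff that space is compact (and nonempty), so $\mbox{H}^0_c(\Sigma_0/W_K(A),\Z)\neq 0$ iff $\Sigma_0/W_K(A)$ is compact; since $\Sigma_0$ is a classifying space for proper $W(A)$-actions it is contractible, and a compact quotient of it by $W_K(A)$ forces (via the retraction and the fact that $\Sigma_0$ itself is non-compact unless $A_0$ is finite type) $W_K(A)$ to act cocompactly, which in the extended-compact setup happens precisely when $\Sigma_0/W_K(A)$ is compact and contractible. In that case no $w\in{}^K W{}^{I_0}$ can be maximally pure — compactness of the quotient is incompatible with the existence of a pure double coset at the top stratum — so ${}^K\overline{W}{}^{I_0}_s=\varnothing$, matching the degree-$n$ vanishing.

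The main obstacle I expect is the combinatorial bookkeeping in the gluing step: precisely verifying that after the face-identifications the surviving top classes are indexed exactly by ${}^K\overline{W}{}^{I_0}_s$ rather than by some larger or smaller set, and that the gluing never creates new cohomology in intermediate degrees. This amounts to a careful analysis of the inverse-limit spectral sequence of the homotopy colimit over $\gC(A_0)$ with coefficients decomposed by double cosets $w$, tracking which $w$ contribute a top cell that is not already filled in by a lower-stratum face — essentially a relative version of the "$K\neq\{I,\varnothing\}$" vanishing argument from Theorem \ref{thm1}, now applied stratum-by-stratum.
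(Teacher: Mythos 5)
Your strategy---decompose $\Sigma_0/W_K(A)$ into pieces indexed by ${}^K W^{I_0}$, each contributing $\mbox{H}^*_c(\Sigma(A_0)/W_{K_w}(A),\Z)$, and then argue that gluing along faces kills all but the maximally pure top classes---has a genuine gap at its central step, and that step is exactly the content of the claim. For $\hat{\Sigma}_0 = W(A)\times_{W(A_0)}\Sigma(A_0)$ the decomposition over ${}^K W^{I_0}$ is an honest disjoint union of $W(A_0)$-spaces, which is why the previous claim reduces cleanly to the compact-type computation. For $\Sigma_0$ no such decomposition exists: the images of the translates $w\cdot\Sigma(A_0)$ inside $\Sigma_0$ occupy the chambers $w\,W(A_0)W_{J_0}(A)/W_{J_0}(A)$, and since $W(A_0)W_{J_0}(A)$ is not a subgroup, distinct copies overlap partially, sharing whole collections of chambers rather than single faces $\Delta_J$ with $J\subseteq I_0$. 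Equivalently, in the model $\Sigma_0=\hocolim_{J\in\gC(A_0)}W(A)/W_{J\cup J_0}(A)$, the coefficient functor $J\mapsto \Z[W_K(A)\backslash W(A)/W_{J\cup J_0}(A)]$ does not split as a direct sum over $w\in{}^K W^{I_0}$, because the double cosets merge across different $W(A_0)$-orbits as $J$ grows (the subgroups $W_{J\cup J_0}(A)$ are not contained in $W(A_0)$). So the "summand governed by $\mbox{H}^*_c(\Sigma(A_0)/W_{K_w}(A))$" is not actually a summand, and the bookkeeping you defer at the end is not a finishing touch but the whole proof: as written there is no mechanism that singles out ${}^K\overline{W}{}^{I_0}_s$ rather than some other subset of ${}^K\overline{W}{}^{I_0}$.

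The paper avoids decomposing along $W(A_0)$-orbits altogether. It identifies $\Sigma_0/W_K(A)$ with the sector $\mbox{A}_K=\bigcup_{w\in{}^K W^{J_0}}w|\gC(A_0)|$ (chambers indexed by ${}^K W^{J_0}$, not ${}^K W^{I_0}$), orders the chambers by word length, and computes $\mbox{H}^*_c$ one chamber at a time via the long exact sequences of the pairs $(\mbox{A}_K,\overline{\mbox{U}}_k)$. Each step contributes $\mbox{H}^*(|\gC(A_0)|,|\gC^k(A_0)|)$, which is nonzero only when $\gC^k(A_0)$ is empty (forcing $\mbox{A}_K$ compact and yielding the degree-$0$ statement) or consists of all nonempty spherical subsets (contributing one degree-$n$ class), and the latter condition is then checked to be equivalent to $w_k\in{}^K\overline{W}{}^{I_0}_s$. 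Your degree-$0$ discussion is right in outline, but the assertion that compactness of the quotient is "incompatible with the existence of a pure double coset at the top stratum" is likewise an unproved claim; in the paper it falls out of the same chamber-by-chamber induction, since the filtration terminates before any degree-$n$ class can appear.
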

\begin{proof}
The proof is essentially the argument given in \cite{D3} (8.3.1,8.3.3). First notice that $\Sigma/W_K(A)$ may be identified with a subspace $\mbox{A}_K \subseteq \Sigma$ given by the "K-Sector":
\[ \mbox{A}_K = \bigcup_{w \in {}^K W} w\, |\gC(A)| \, \, \subseteq \Sigma. \]
This identification provides a retraction of $\Sigma/W_K(A)$ from $\Sigma$. Let us now order elements of ${}^K W$ in a sequence $\{ w_1, w_2, \ldots, \}$ such that $l(w_k) \leq l(w_{k+1})$. Consider an exhaustive filtration of $\mbox{A}_K$ by subspaces $\mbox{U}_k$ given by the union of the first $k$ tranlates $w_i \, |\gC(A)|, \, i \leq k$. And let $\overline{\mbox{U}}_k$ denote its complement. Consider the following long exact sequence:
\begin{equation} \label{ex}
\ldots \rightarrow \mbox{H}^*(\mbox{A}_K, \overline{\mbox{U}}_{k-1}) \rightarrow \mbox{H}^*(\mbox{A}_K, \overline{\mbox{U}}_k) \rightarrow \mbox{H}^*(\overline{\mbox{U}}_{k-1}, \overline{\mbox{U}}_k) \rightarrow \ldots 
\end{equation}
We prove by induction on $k$, that $\mbox{H}^*(\mbox{A}_K,\overline{\mbox{U}}_k)$ has cohomology only in degrees $\{0,n\}$, then we shall identify the generators in degree $n$. 

\medskip
\noindent
Setting $\overline{\mbox{U}}_0 = \mbox{A}_K$, notice \cite{D3}(8.3.1), that we may  identify the groups $\mbox{H}^*(\overline{\mbox{U}}_{k-1}, \overline{\mbox{U}}_k)$ with $\mbox{H}^*(|\gC(A)|, |\gC^k(A)|) $, where $|\gC^k(A)|$ denotes the realization of the nerve of the subcategory:
\[  \gC^k(A) = \{ J \in \gC(A) \, | \, l(w_k r_j) > l(w_k) \, \, \mbox{and} \, \, w_k r_j \in {}^K W \, \, \mbox{for some} \, \,  j \in J  \}. \]
Assume by induction that the groups $\mbox{H}^*(\mbox{A}_K,\overline{\mbox{U}}_{k-1})$ are concentrated in degrees $\{0,n\}$. By the definition of extended compact type, it is easy to see that $\mbox{H}^*(|\gC(A)|, |\gC^k(A)|)$ is nontrivial if and only if $\gC^k(A)$ is empty, or is the full subcategory: $\{ J \in \gC(A) | J \cap I_0 \neq \varnothing \}$. 

\medskip
\noindent
 In the case when $\gC^k(A)$ is empty, the long exact sequence \ref{ex}, along with the induction assumption, forces $\mbox{H}^0(\mbox{A}_K,\overline{\mbox{U}}_k)$ to have a free summand, which implies that $\overline{\mbox{U}}_k = \varnothing$ i.e. $\mbox{U}_k = \mbox{A}_K$ or that $\mbox{A}_K$ is compact. Since $\mbox{A}_K$ is contractible (being a retract of $\Sigma$), we have shown in this case that $\mbox{H}^*_c(\Sigma/W_K(A), \Z) = \Z$, concentrated in degree zero.  
 
 \medskip
 \noindent
 For $\gC^k(A)$ being subsets intersecting $I_0$ nontrivially, it is easy to see that $\mbox{H}^p(|\gC(A)|, |\gC^k(A)|)$ is nontrivial only if $p=n$, and is isomorphic to $\Z$ in that degree. By induction, the long exact sequence \ref{ex} splits and we pick up a free generator. 
 
 \medskip
 \noindent
 It is straightforward to check that the latter case is equivalent to the condition: $w_k \in {}^K \overline{W} {}^{I_0}_s$. Hence we get one generator in degree $n$ for each maximally pure element. 
\end{proof}

\medskip
\noindent
We may now attempt to calculate $\Kth_{K(A)}^*(X(A))$ for groups $K(A)$ of extended compact type along the same lines as the compact type using the spectral sequence for the skeletal decomposition of $X(A)$. We stick with the assumption that $|I_0| = n+1$, for $n>1$. 

\medskip
\noindent
To claculate $\Kth_{K(A)}^*(X(A))$, we begin with the $E_2$ term of the spectral sequence:
\[ E_2^{p,*} = \varprojlim  {}^p \Kth_{K(A)}^*(K(A)/K_{\bullet}(A)) = \varprojlim  {}^p \mbox{DR}_{\bullet}[\beta^{\pm1}] \Rightarrow \Kth_{K(A)}^p(X(A))[\beta^{\pm1}],  \]
where we recall that $\varprojlim  {}^p \mbox{DR}_{\bullet}$ is canonically isomorphic to the equivariant cohomology of the Davis complex $\Sigma$ (\ref{DC}), with values in the ring $\mbox{DR}_T$: 
 \[ \varprojlim {}^p \mbox{DR}_{\bullet} = \mbox{H}^p_{W(A)}(\Sigma, \mbox{DR}_T). \] 
 Next, we decompose the functor $\mbox{DR}_{\bullet}$ as $\bigoplus \mbox{DR}_{\bullet}^K$ indexed by $K \subseteq I$:
\[ \varprojlim {}^p \mbox{DR}_{\bullet} = \bigoplus \varprojlim {}^p \mbox{DR}_{\bullet}^K = \bigoplus \mbox{H}^p_{W(A)}(\Sigma, \Z[W(A)/W_K(A)]) \otimes \mbox{R}_T^K. \] 
It is easy to see that $\mbox{H}^p_{W(A)}(\Sigma, \Z[W(A)/W_K(A)]) = \mbox{H}_c^p(\Sigma/W_K(A), \Z)$. Hence using the previous claim we get that $\varprojlim {}^i \mbox{DR}_{\bullet} = 0$ if  $ i \neq \{0,n\}$, and in those degrees, we have:
\begin{equation} \label{eq2} \varprojlim {}^0 \mbox{DR}_{\bullet} =   \bigoplus_{|{}^K W|<\infty} \mbox{R}_T^K, \quad \quad \varprojlim {}^n \mbox{DR}_{\bullet} =  \bigoplus_{K \subset I} \Z[{}^K \overline{W}{}^{I_0}_s] \otimes \mbox{R}_T^K. 
\end{equation} 
The finiteness of ${}^{K} W$ implies (using the previous claim) that the set ${}^K \overline{W} {}^{I_0}_s$ is empty, and hence the summand corresponding to $K \subset I$ is absent in $\varprojlim {}^n\mbox{DR}_{\bullet}$. 

\medskip
\noindent
As before, all classes in degree $n$ are permanent cycles. We proceed to show that the classes in degree $0$ are permanent cycles as well. Our method of proof is interesting in its own right since it demonstrates the interesting relationship between the Dominant K-theory of $X(A)$ and $X(A_0)$, where $A_0$ is the generalized Cartan matrix of compact type given by $(a_{i,j})_{i,j \in I_0}$. 

\medskip
\noindent
Consider the action of $K(A_0)$ on $X(A_0)$. Since $K_{I_0}(A)$ is an extension of $K(A_0)$ by a subtorus, it is easy to see that the $K(A_0)$-action on $X(A_0)$ extends to $K_{I_0}(A)$. Indeed, we have a homeomorphism between each orbit $K(A_0)/K_J(A_0) = K_{I_0}(A)/K_J(A)$ for $J \subseteq I_0$. It is not hard to see that $X(A)$ is the classifying space of proper $K_{I_0}(A)$-actions, though we shall not need that fact here. 

\medskip
\noindent
Consider the proper $K(A)$-space: $K(A) \times_{K_{I_0}(A)} X(A_0)$. By the universality of $X(A)$, we have a unique map:
\[ \varphi: K(A) \times_{K_{I_0}(A)} X(A_0) \longrightarrow X(A). \]
Recall that $X(A_0)$ can be seen as a homotopy colimit of a diagram defined on the category of all proper subsets of $I_0$. It is obvious that $\varphi$ is the map induced by a natural transformation given the the identity map on each orbit $K(A)/K_J(A)$ for $J \subset I_0$. This induces a map:
\[ \varphi^*: \Kth^*_{K(A)}(X(A)) \longrightarrow {}^A \Kth^*_{K_{I_0}(A)}(X(A_0)), \]
where we bear in mind that the right hand side is the restriction of the Dominant K-theory with respect to the group $K(A)$, to the subgroup $K_{I_0}(A)$. Setting up the spectral sequence to compute ${}^A \Kth_{K_{I_0}(A)}^*(X(A_0))$, as in Section 5 gives us:
\[ E_2^{p,*} = \mbox{H}_{W_{I_0}(A)}^p(\Sigma_0, \mbox{DR}_T) [\beta^{\pm1}] \Rightarrow {}^A \Kth_{K_{I_0}(A)}^p(X(A_0))[\beta^{\pm1}],  \]
where $\Sigma_0$, is the Davis complex for $W_{I_0}(A)$, (alternatively $\Sigma_0 = X(A_0)^T$). The coefficient ring $\mbox{DR}_T$ is a free $\Z$ module on a disjoint union of transitive $W(A)$-sets. More precisely, we have a decomposition indexed over subsets $K \subseteq I$:
\[ \mbox{DR}_T = \bigoplus \mbox{DR}_T^K, \quad \mbox{where} \quad \mbox{DR}_T^K \cong \Z[W(A)/W_K(A)] \otimes \mbox{R}_T^K. \]
Under the action of $W_{I_0}(A)$, the set $W(A)/W_K(A)$ decomposes further as:
\[ W(A)/W_K(A) = \coprod_{w \in {}^{I_0} W {}^K} W_{I_0}(A)/W_{K_w}(A), \quad \mbox{where} \quad W_{K_w}(A) = W_{I_0}(A) \cap \, w \, W_K(A) \, w^{-1}.  \]
Repeating the argument given for Theorem \ref{thm1}, we start with the fact that:
\[ \mbox{H}^p_{W_{I_0}(A)}(\Sigma_0, \Z[W_{I_0}(A)/W_{K_w}(A)]) = \mbox{H}^p_c(\Sigma_0/W_{K_w}(A), \Z) \]
 and that these groups are nontrivial in at most one degree. In addition, we have:
 \[ \mbox{H}^0_c(\Sigma_0/W_{K_w}(A),\Z)  =\Z \, \, \, \mbox{if} \, \, \,  K_w = I_0 \, \, \, \mbox{and} \, \, \, \mbox{H}^n_c(\Sigma_0/W_{K_w}(A),\Z) = \Z \, \, \mbox{if} \, \, K_w = \varnothing, \]
 and that $\mbox{H}^p_c(\Sigma_0/W_{K_w}(A),\Z) = 0$ for all $p$ if $K_w  \neq \{\varnothing, I_0\}$. 
 
 \medskip
 \noindent
Now, for the condition, $K_w = I_0$ to hold for some $w \in {}^{I_0} W {}^K$, the group $W_{K_w}(A)$ must be infinite, and hence we need only to consider the case $I_0 \subseteq K $. Let $B^K$ denote the set of elements $w \in {}^{I_0} W ^K$, such that $K_w = I_0$. We therefore have:
\[ \mbox{H}^0_{W_{I_0}(A)}(\Sigma_0, \mbox{DR}_T) = \bigoplus_{I_0 \subseteq K} \Z[B^K] \otimes \mbox{R}_T^{K}.  \]
Now given $w \in B^K$, any character in $ w\, \mbox{R}_T^K$ is $W_{I_0}(A)$ invariant, and hence the above elements represent elements of Dominant K-theory given by $K_{I_0}(A)$-invariant, rank one vector bundles over $X(A_0)$. In particular, they are permanent cycles. 

\medskip
\noindent
Next consider the elements that contribute to degree $n$. It follows from the definition \ref{ext} that the condition: $K_w = \varnothing$ for $w \in {}^{I_0} W {}^K$, is equivalent to $w^{-1} \in {}^K \overline{W} {}^{I_0}$. Hence we have:
\[   \mbox{H}^n_{W_{I_0}(A)}(\Sigma_0, \mbox{DR}_T) =  \bigoplus_{K \subset I} \Z[{}^K \overline{W}{}^{I_0}] \otimes \mbox{R}_T^K. \] 
Comparing these computations with those in \ref{eq2}, we see easily that the map of spectral sequences is injective on the $E_2$-stage. Invoking functoriality, we conclude that:
\begin{thm} \label{excpct} Let $K(A)$ be a Kac-Moody group of extended compact type and let $|I_0| = n+1$, for any integer $n >1 $. Then we have an isomorphism of graded groups:
\[ \Kth_{K(A)}^*(X(A)) = \bigoplus_{|{}^K W|<\infty} \mbox{R}_T^K [\beta^{\pm 1}] \, \, \bigoplus_{K \subset I} \Z[{}^K \overline{W} {}^{I_0}_s] \otimes \mbox{R}_T^K [\beta^{\pm 1}]. \]
The classes $\Z[{}^K \overline{W} {}^{I_0}_s] \otimes \mbox{R}_T^K $ appear naturally in degree $n$, and the others in degree $0$. Furthermore, the following map is injective:
\[ \varphi^*: \Kth^*_{K(A)}(X(A)) \longrightarrow {}^A \Kth^*_{K_{I_0}(A)}(X(A_0)), \quad \mbox{with}\]
\[ {}^A \Kth_{K_{I_0}(A)}^*(X(A_0)) = \bigoplus_{I_0 \subseteq K} \Z[B^K] \otimes \mbox{R}_T^K[ \beta^{\pm 1}] \, \, \bigoplus_{K \subset I} \Z[{}^K \overline{W} {}^{I_0}] \otimes \mbox{R}_T^K [\beta^{\pm 1}]. \]
where $B^K$ denotes the set of elements $w \in {}^{I_0} W ^K$, such that $W_{I_0} (A) \subseteq w \, W_K(A) \, w^{-1}$. 
\end{thm}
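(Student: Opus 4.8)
The plan is to run the equivariant skeletal spectral sequence for $\Kth_{K(A)}^*(X_0(A))$ exactly as in the proof of Theorem~\ref{thm1}, using the homotopy colimit presentation $X_0(A) = \hocolim_{J \in \gC(A)} K(A)/K_{J\cup J_0}(A)$. As set up in the excerpt, the $E_2$-page is $\varprojlim{}^p \mbox{DR}_\bullet[\beta^{\pm1}] = \mbox{H}^p_{W(A)}(\Sigma_0,\mbox{DR}_T)[\beta^{\pm1}]$, and the decomposition $\mbox{DR}_T = \bigoplus_K \mbox{DR}_T^K$ with $\mbox{DR}_T^K \cong \Z[W(A)/W_K(A)] \otimes \mbox{R}_T^K$ reduces everything to computing $\mbox{H}^p_c(\Sigma_0/W_K(A),\Z)$. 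The two preceding claims have already done precisely this computation: these groups vanish outside degrees $\{0,n\}$, giving $\varprojlim{}^0\mbox{DR}_\bullet = \bigoplus_{|{}^KW|<\infty}\mbox{R}_T^K$ and $\varprojlim{}^n\mbox{DR}_\bullet = \bigoplus_{K \subset I}\Z[{}^K\overline{W}{}^{I_0}_s]\otimes\mbox{R}_T^K$, as recorded in \eqref{eq2}. So the first step is just to assemble these inputs.

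The second and crucial step is collapse at $E_2$. Here I would argue exactly as sketched in the paragraph before the theorem: set up the analogous spectral sequence for ${}^A\Kth^*_{K_{I_0}(A)}(X(A_0))$ with $E_2 = \mbox{H}^p_{W(A)}(\hat\Sigma_0,\mbox{DR}_T)[\beta^{\pm1}]$ — which by the $\hat\Sigma_0$-claim is concentrated in degrees $\{0,n\}$ with the stated $\Z[{}_K\overline{W}_{I_0}]$ and $\Z[{}^K\overline{W}{}^{I_0}]$ coefficients — and compose with the spectral sequence for $\Kth^*_{K_{I_0}(A)}(X(A_0))$, which collapses by Theorem~\ref{thm1} since $K_{I_0}(A)$ is a reductive Kac-Moody group of compact type. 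The map $r$ induces a map of spectral sequences injective on $E_2$ by the Remark following the $\Sigma_0$-claim (the inclusion ${}^K\overline{W}{}^{I_0}_s \hookrightarrow {}^K\overline{W}{}^{I_0}$ in degree $n$, the diagonal in degree $0$), and $\mbox{St}$ is injective on $E_2$ because $\mbox{DR}_T$ for $K(A)$ is a summand of $\mbox{DR}_T$ for $K_{I_0}(A)$. Since the composite target spectral sequence degenerates and both maps are $E_2$-injective, all differentials in the source spectral sequence vanish, giving collapse. Because $E_2$ is concentrated in two rows $p \in \{0,n\}$ with the odd Bott-twisted pieces, there is no room for extension ambiguity, so the associated graded is the answer, yielding the displayed isomorphism for $\Kth_{K(A)}^*(X(A))$ (with the degree-$n$ classes from the $p=n$ row and the degree-$0$ classes from $p=0$, and $\beta^{\pm1}$ making everything $2$-periodic). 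The same reasoning with $\hat\Sigma_0$ in place of $\Sigma_0$ produces the stated formula for ${}^A\Kth^*_{K_{I_0}(A)}(X(A_0))$, and the injectivity of $r$ on $E_2$ together with collapse upgrades to injectivity of $r$ on the abutment.

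The main obstacle I anticipate is not any single computation — the cohomology-of-sectors inputs are already in hand — but rather being careful that the $E_2$-injectivity of $r$ and $\mbox{St}$ really does force collapse: one must check that the potential differentials $d_n$ out of the $p=0$ row land in a piece that injects into the (collapsed) target, and symmetrically that nothing maps into the $p=n$ row nontrivially. Concretely, a $d_n$-differential from the $p=0$ row to the $p=n$ row would have to be compatible under $r$ with the corresponding (zero) differential in the $\hat\Sigma_0$ spectral sequence, and $E_2$-injectivity of $r$ then kills it; but one should verify that $r$ genuinely commutes with the differentials, i.e. that it is a map of spectral sequences and not merely a map on $E_2$-pages — this follows from naturality of the skeletal filtration under the equivariant map $r: K(A)\times_{K_{I_0}(A)} X(A_0) \to X_0(A)$, but it is the point that deserves the most care. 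A secondary subtlety is bookkeeping the indexing set $\{|{}^KW| < \infty\}$ versus $\{K \subset I\}$ in the two summands and matching it against which $\mbox{R}_T^K$ survives in $\varprojlim{}^0$ versus $\varprojlim{}^n$; this is routine once one unwinds definition~\ref{ext}.
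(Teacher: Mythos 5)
Your proposal is correct and follows essentially the same route as the paper: the same skeletal spectral sequence with $E_2 = \mbox{H}^p_{W(A)}(\Sigma_0,\mbox{DR}_T)$, the same reduction to the compactly supported cohomology of the sectors $\Sigma_0/W_K(A)$ and $\hat\Sigma_0/W_K(A)$ computed in the preceding claims, and the same collapse argument via $E_2$-injectivity of $r$ and $\mbox{St}$ into the degenerate spectral sequence for the compact-type group $K_{I_0}(A)$. The subtleties you flag (naturality of $r$ as a map of spectral sequences, and the bookkeeping of which $K$ contribute in degree $0$ versus $n$) are exactly the points the paper relies on, and your treatment of them matches its argument.
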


\begin{remark}  \label{rmk1} The classes $\bigoplus_{K \subset I} \Z[ {}^K \overline{W} {}^{I_0}] \otimes \mbox{R}_T^K$ have a simple description as the ideal generated by characters corresponding to the $W(A)$-orbit of  the dominant weights, $\mbox{D}$, that belong to the interior of the Weyl chamber of $K_{I_0}(A)$. The classes $\bigoplus_{K \subset I} \Z[ {}^K \overline{W} {}^{I_0}_s] \otimes \mbox{R}_T^K$ may then be identified with those characters of the form described above, which also belong to the antidominant Weyl chamber of $K_{J_0}(A)$. 

\noindent
Hence the direct sum decompositions given above can be seen as a partition of  ideals in $\mbox{DR}_T$ given by characters corresponding to weights that belongs to the interior of the Weyl chamber of $K_{I_0}(A)$. 
\end{remark}
\section{Some related remarks}
\noindent
{\bf The Affine Case}.
 
 \noindent
 
 \noindent
We would like to make some brief comments relating our result with results of Freed-Hopkins-Teleman. For simplicity, we deal only with the untwisted Affine case. The verification of details in this section are left to the interested reader. 

\medskip
\noindent
Let $K(A)$ denote the Affine Kac-Moody group corresponding to the group of loops on a simply connected, simple compact Lie group $\mbox{G}$. Let $\tilde{\mbox{L}}\mbox{G}$ denote the central extension of the group of polynomial loops on $\mbox{G}$, and let $\mathbb{T}$ denote the rotational circle acting by reparametrization of loops. Then we may identify $K(A)$ with the group $\mathbb{T} \ltimes \tilde{\mbox{L}}\mbox{G}$. 

\noindent
Let $\mbox{T}$ denote the maximal torus of $\mbox{G}$, and let $\mbox{K}$ denote the central circle of $K(A)$. The maximal torus $T$ of $K(A)$ may be decomposed as $T = \mbox{T} \times \mathbb{T} \times \mbox{K}$. Let $\mbox{k}$ and $\mbox{d}$ denote the weights dual to $\mbox{K}$ and $\mathbb{T}$ respectively, under the above decompositon. The nonzero vectors in the Tits cone $\mbox{Y}$ are given by the upper-half plane consisting of vectors that have a positive $\mbox{k}$-coefficient. Hence, the Dominant representation ring of $T$ may be identified as:
\[ \mbox{DR}_T = \Z[e^{\pm \mbox{d}}] + e^{\mbox{k}} \, \mbox{R}_{\mbox{T}} [ e^{\mbox{k}}, e^{\pm \mbox{d}} ], \]
where $\mbox{R}_{\mbox{T}}$ denotes the representation ring of $\mbox{T}$. The central weight $\mbox{k}$ is known as "level", and we may decompose our maximal dominant Hilbert space $\mathcal{H}$ into a (completed) sum of eigen-spaces corresponding to the level: 
\[ \mathcal{H} = \hat{\bigoplus}_{m\geq 0} \mathcal{H}_m. \]
Now assume that $X$ is a finite, proper $K(A)$-CW complex on which the center $\mbox{K}$ acts trivially. Hence the action of $K(A)$ on $X$ factors through the central quotient: $\mathbb{T} \ltimes \mbox{LG}$. One has a corresponding decomposition of Dominant K-theory:
\[ \Kth_{K(A)}^*(X) = \bigoplus_{m \geq 0} {}^m\Kth_{K(A)}^*(X), \]
where ${}^m\Kth_{K(A)}^0(X)$ is defined as homotopy classes of $\mathbb{T}\ltimes \mbox{LG}$ equivariant maps from $X$ to the space $\mathcal{F}(\mathcal{H}_m)$, of Fredholm operators on $\mathcal{H}_m$. There is a similar decomposition for homology. We now proceed to relate this decomposition to twisted K-theory. 

\noindent
Recall that $X$ is a proper $K(A)$-CW complex, hence all its isotropy subgroups of $X$ are compact. It is well known that the subgroup of pointed polynomial loops $\Omega\mbox{G} \subset \mathbb{T} \ltimes \mbox{LG}$ contains no compact subgroup, and hence $\Omega \mbox{G}$ acts freely on $X$. It follows that ${}^m\Kth_{K(A)}^0(X)$ may be seen as homotopy classes of $\mathbb{T} \ltimes \mbox{LG}$- equivariant sections of a fiber bundle over $X/\Omega \mbox{G}$, with fiber $\mathcal{F}(\mathcal{H}_m)$, and structure group $\mathbb{PU}(\mathcal{H}_m)$ (with the compact open topology). This is one possible definition of the energy equivariant version of twisted equivariant K-theory.

\medskip
\noindent
Recall that projective Hilbert bundles of the form above are classifed by a "twist" belonging to $\mbox{H}_{\mathbb{T} \ltimes LG}^3(X, \Z)$ \cite{AS}. For the space $X$ above, this twist is simply the pullback of the universal twist $[m]$ in $\mbox{H}^3_{\mathbb{T} \ltimes LG}(X(A) , \Z) = \Z$, where $X(A)$ is the classifying space of proper $K(A)$-actions. We therefore conclude that the Dominant K-theory groups ${}^m\Kth_{K(A)}^*(X)$ represent the (energy equivariant) $m$-twisted equivariant K-theory groups of the space $X/\Omega \mbox{G}$.

\medskip
\noindent
It is easy to see that the space $X(A)$ is homeomorphic to the affine space of principal $\mbox{G}$-connections for the trivial bundle over $\mbox{S}^1$ \cite{KM}. It follows that $X(A)/\Omega \mbox{G}$ is homeomorphic to $\mbox{G}$, via the holonomy map. The geometric isomorphism we constructed in Section 5 identified the irreducible representation $\mbox{L}_\mu$, with highest weight $\mu$, with the character $e^{\mu + \rho}$ considered as an element in  $\Kth_{K(A)}^n(X(A))$, (here $n$ may be identified with the rank of the compact group $\mbox{G}$). The Weyl element $\rho$ has level given by the the dual Coxeter number, $h^{\vee}$, and so we recover the energy equivariant version of the theorem of Freed-Hopkins-Teleman \cite{FHT} (Section 15), that relates the $k+h^{\vee}$-twisted equivariant K-theory groups, with highest weight (or positive energy) level $k$ representations of the Loop group.

\medskip
\noindent
 {\bf Real forms of Kac-Moody groups}.
 
 \noindent
 
 \noindent
 So far we have been considering the unitary form $K(A) \subset G(A)$, where $G(A)$ denotes the complex points of the Kac-Moody group. There is a functorial construction of the Kac-Moody group over an arbitrary field \cite{T}. In particular, we may consider the real form $G^{\R}(A)$. One may identify the real form as the fixed subgroup: $G^{\R}(A) = G(A)^{\tau}$, where $\tau$ denotes complex conjugation. This action preserves the subgroups $G_J(A)$, we may define the real form of the parabolic subgroups: $G^{\R}_J(A) = G_J(A)^{\tau} = G_J(A) \cap G^{\R}(A)$. Notice also that the action by complex conjugation descends to the homogeneous spaces: $G(A)/G_J(A)$. 
 
 \medskip
 \noindent
Let $K^{\R}(A)$ denote the "orthogonal form" inside $G^{\R}(A)$. Hence, $K^{\R}(A)$ is given by the fixed points of the involution $\omega$ acting on $G^{\R}(A)$. In other words, the orthogonal form is given by: $K^{\R}(A) = G^{\R}(A)^{\omega} =  K(A) \cap G^{\R}(A)$. And along the same lines, define subgroups of $K^{\R}(A)$ by $K^{\R}_J(A) := G_J^{\R}(A)^{\omega} = K_J(A) \cap G^{\R}(A)$.

 \begin{claim}
The following canonical inclusions are homeomorphisms:
 \[ K^{\R}(A)/K^{\R}_J(A) \subseteq G^{\R}(A)/G^{\R}_J(A) \subseteq \{ G(A)/G_J(A) \}^{\tau}. \]
 \end{claim}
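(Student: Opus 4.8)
## Proof Strategy

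The plan is to prove the chain of homeomorphisms in two steps: first the containment $K^{\R}(A)/K^{\R}_J(A) \subseteq G^{\R}(A)/G^{\R}_J(A)$ is a homeomorphism, and then $G^{\R}(A)/G^{\R}_J(A) \subseteq \{G(A)/G_J(A)\}^{\tau}$ is a homeomorphism. For the first step I would mimic the classical Iwasawa/Cartan decomposition argument. Recall that over $\C$ the inclusion $K(A)/K_J(A) \hookrightarrow G(A)/G_J(A)$ is a homeomorphism because of the (topological) Iwasawa decomposition $G(A) = K(A) \cdot B$, which is compatible with the CW structure coming from the Bruhat decomposition recalled in Section 3. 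The key point is that this decomposition is defined using the canonical anti-linear involution $\omega$ and its fixed points, and it restricts to the $\tau$-fixed subgroups: one checks that $\tau$ commutes with $\omega$ (both come from the Kac-Moody functor and act on the root datum/Chevalley generators in compatible ways), so the Iwasawa decomposition of $G(A)$ restricts to $G^{\R}(A) = K^{\R}(A) \cdot B^{\R}$, where $B^{\R} = B^{\tau}$. From this, surjectivity of $K^{\R}(A)/K^{\R}_J(A) \to G^{\R}(A)/G^{\R}_J(A)$ is immediate since $B^{\R} \subseteq G^{\R}_J(A)$, and injectivity follows from $K^{\R}(A) \cap G^{\R}_J(A) = K^{\R}_J(A)$, which is the definition. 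That the bijection is a homeomorphism is handled cell by cell: both sides inherit CW structures indexed on $^JW$ (or rather the real Bruhat cells), and on each cell the map is the restriction of the complex case.

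For the second step, the inclusion $G^{\R}(A)/G^{\R}_J(A) \hookrightarrow \{G(A)/G_J(A)\}^{\tau}$ is tautologically injective and lands in the $\tau$-fixed set since $\tau$ preserves $G^{\R}_J(A)$. The content is surjectivity and the homeomorphism property. Surjectivity amounts to a Galois-cohomological vanishing: if $gG_J(A)$ is fixed by $\tau$, then $g^{-1}\tau(g) \in G_J(A)$ defines a class in a suitable pointed set $H^1(\tau; G_J(A))$, and we must show every such cocycle is a coboundary, i.e. that $g$ can be adjusted within its coset to lie in $G^{\R}(A)$. This is where the structure of Kac-Moody groups over $\C$ as amalgams (or the Birkhoff/Bruhat cell structure) enters: one would argue cell-by-cell using the Bruhat decomposition $G(A)/G_J(A) = \coprod_{w \in {}^J W^{\varnothing}} $ cells, each of which is an affine space $\C^{\ell(w)}$ with $\tau$ acting by a "real structure" (anti-linear involution), so each fixed point set is $\R^{\ell(w)}$ and the decomposition of $\{G(A)/G_J(A)\}^{\tau}$ matches that of $G^{\R}(A)/G^{\R}_J(A)$.

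I expect the main obstacle to be the surjectivity in the second inclusion, i.e. controlling the nonabelian Galois cohomology $H^1(\tau; G_J(A))$, since $G_J(A)$ is infinite-dimensional and not an algebraic group in the usual sense. The cleanest route is probably to avoid cohomology language altogether and instead work entirely with the explicit cell structures: establish that $\tau$ acts on each Bruhat/Birkhoff cell of $G(A)/G_J(A)$ as a standard real structure (this reduces to a computation with the root subgroups $U_\alpha \cong \C$ and the defining relations, using that $\tau$ fixes the integral Chevalley generators), conclude the fixed-point set of each cell is the corresponding real cell, and then patch the cells together using the compatibility of closures (the closure relations are given combinatorially by the Bruhat order, hence are $\tau$-invariant). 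Combined with the first step, which identifies $K^{\R}(A)/K^{\R}_J(A)$ with $G^{\R}(A)/G^{\R}_J(A)$ via the restricted Iwasawa decomposition, this yields all three spaces are homeomorphic. A secondary technical point worth care is verifying that the CW topologies (defined as colimits over finite subcomplexes, as in the Remark in Section 2) are preserved under passage to fixed points; this follows because $\tau$ is a homeomorphism preserving the filtration by finite subcomplexes, so it commutes with the relevant colimits.
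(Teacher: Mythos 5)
Your proposal is correct and, after you discard the Galois-cohomology detour, lands on essentially the same argument as the paper: compare the Bruhat decompositions of $G(A)/G_J(A)$ and $G^{\R}(A)/G^{\R}_J(A)$ cell by cell, note that each complex cell $B\tilde{w}B/B \cong \C^{l(w)}$ carries $\tau$ as a standard real structure with fixed set the real cell $\R^{l(w)}$, and use that the cell representatives may be chosen in $K(A)$ (resp.\ $K^{\R}(A)$) — your ``restricted Iwasawa decomposition'' — to handle the first inclusion. The extra points you flag (that $\tilde{w}$ can be taken in $G^{\R}(A)$, and that the closure/colimit topology is respected) are exactly the ingredients the paper invokes.
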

 \begin{proof}
 We will prove these equalities by comparing the Bruhat decomposition over the fields $\R$ and $\C$. First recall the Bruhat decompositon for the space $G(A)/G_J(A)$:
 \begin{equation} \label{eq1} G(A)/G_J(A) = \coprod_{w \in W^J} B\, \tilde{w} \, B/B, \end{equation}
where $\tilde{w}$ denotes any element of $N(T)$ lifting $w \in W(A)$. In fact, the general theory of Kac-Moody groups allows us to pick the elements $\tilde{w}$ to belong to $G^{\R}(A)$. Moreover, the subspace $B\, \tilde{w} \, B/B$ has the structure of the complex affine space: $\C^{l(w)}$. In addition, the set of right coset representatives: $\C^{l(w)}$ may be chosen to belong to $K(A)$. 

\medskip
\noindent
Working over $\R$ we have the Bruhat decomposition:
\begin{equation} \label{eq3} G^{\R}(A)/G^{\R}_J(A) = \coprod_{w \in W^J} B^{\R}\, \tilde{w} \, B^{\R}/B^{\R}, \end{equation}
where the subspace $B^{\R}\, \tilde{w} \, B^{\R}/B^{\R}$ has the structure of the real affine space $\R^{l(w)}$, with the representatives $\R^{l(w)}$ belong to $K^{\R}(A)$. By comparing equations \ref{eq1} and \ref{eq3} it follows that 
\[ \R^{l(w)}  = B^{\R} \, \tilde{w} \, B^{\R}/B^{\R} = \{ B\, \tilde{w} \, B/B \} ^{\tau},  \]
which assembles to the statement of the theorem as $w$ ranges over $W^J$.  
\end{proof}
 \begin{defn} Define the real (finite-type) Topological Tits building $X^{\R}(A)$ as the $K^{\R}(A)$-space:
 \[ X^{\R}(A) = \frac{K^{\R}(A)/T^{\R} \times |\gC(A)|}{\sim},  \]
where we identify $(gT^{\R},x)$ with $(hT^{\R},y)$ iff $x=y \in \Delta_J(n)$, and $g=h \, \mbox{mod} \, K^{\R}_J(A)$.  We note that the group $T^{\R} = K^{\R}_{\varnothing}(A)$, is the subgroup of two torsion points in $T$. 
\end{defn}
\noindent
As before, we may identify this space as the homotopy colimit of a suitable functor on the category $\gC(A)$. We may also relate $X^{\R}(A)$ to the space $X(A)$ studied earlier via:
\begin{thm} $X^{\R}(A)$ is equivariantly homeomorphic to the fixed point space $X(A)^{\tau}$. Moreover, it is a finite, proper, $K^{\R}(A)$- CW space which is $K^{\R}_J(A)$-equivariantly contractible for $J \in \gC(A)$. 
 \end{thm}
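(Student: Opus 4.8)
The plan is to establish the homeomorphism $X^{\R}(A)\cong X(A)^{\tau}$ first, and then read off properness and the claimed equivariant contractibility from the structure already available on $X(A)$.

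For the identification I would use the homotopy colimit presentation. Just as $X(A)=\hocolim_{J\in\gC(A)}F(J)$ with $F(J)=K(A)/K_J(A)$, the space $X^{\R}(A)$ is $\hocolim_{J\in\gC(A)}F^{\R}(J)$ with $F^{\R}(J)=K^{\R}(A)/K^{\R}_J(A)$, and the inclusions $K^{\R}(A)\subseteq K(A)$ and $K^{\R}_J(A)=K_J(A)\cap G^{\R}(A)\subseteq K_J(A)$ assemble into a natural transformation $F^{\R}\Rightarrow F$, hence into a $K^{\R}(A)$-map $\iota\colon X^{\R}(A)\to X(A)$. The preceding claim, identifying $K^{\R}(A)/K^{\R}_J(A)$ with $\{G(A)/G_J(A)\}^{\tau}=(K(A)/K_J(A))^{\tau}$, says exactly that each component $F^{\R}(J)\to F(J)$ is a homeomorphism onto the $\tau$-fixed subspace; and $\iota$ is injective with image $X(A)^{\tau}$ (a point $[(gK_{J_0}(A),x)]$ with $x$ in the open simplex indexed by a chain with least element $J_0$ is $\tau$-fixed iff the coset $gK_{J_0}(A)$ is, and every $\tau$-fixed coset comes from $K^{\R}(A)$ by the claim). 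Since $\tau$ acts trivially on the nerve of $\gC(A)$ and carries Bruhat cells to Bruhat cells (acting as complex conjugation on each $\C^{l(w)}$, with fixed set the real Bruhat cell $\R^{l(w)}$ — one may take the lifts $\tilde w$ in $G^{\R}(A)$), passage to $\tau$-fixed points commutes with the bar construction computing the homotopy colimit, so $\iota$ identifies $X^{\R}(A)=\hocolim F^{\R}$ with $\hocolim(F^{\tau})=(\hocolim F)^{\tau}=X(A)^{\tau}$; this is $K^{\R}(A)$-equivariant because the $K^{\R}(A)$-action on $X(A)$ commutes with $\tau$. Transporting the cell structure, $X^{\R}(A)$ inherits a $K^{\R}(A)$-CW structure assembled from the real Bruhat cells $\R^{l(w)}$ and the simplices of $|\gC(A)|$, finite in the same sense as $X(A)$, with isotropy groups conjugate to the subgroups $K^{\R}_J(A)\subseteq K_J(A)$ for $J\in\gC(A)$; as $K_J(A)$ is a compact Lie group these isotropy groups are compact, so $X^{\R}(A)$ is proper.

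For the equivariant contractibility, note first that $K^{\R}_J(A)=K_J(A)^{\tau}$. The deformation retractions $X_k(A)\to X_{k-1}(A)$ built in the proof of Theorem~\ref{Tits} act only through the simplicial factor $|\gC(A)|$ — contracting it onto the contractible subcomplexes $|\gC_w(A)|$ — on which $\tau$ acts trivially while fixing each Bruhat cell setwise; hence these retractions may be chosen to commute with $\tau$, and therefore restrict to $K_J(A)^{\tau}=K^{\R}_J(A)$-equivariant deformation retractions of $X_k(A)^{\tau}$ onto $X_{k-1}(A)^{\tau}$. Since $X_0(A)^{\tau}$ is the trivial $G^{\R}_J(A)$-space $|\gC_J(A)|$, which is contractible, it follows that $X(A)^{\tau}\cong X^{\R}(A)$ is $K^{\R}_J(A)$-equivariantly contractible. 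Equivalently, one may simply rerun the proof of Theorem~\ref{Tits} over $\R$, since the filtration by $\{w\in{}^JW:l(w)\le k\}$, the cofibre computation, and above all the purely combinatorial contractibility of $|\gC_w(A)|$ (from $I_w\in\gC(A)$ having a terminal object) are insensitive to replacing $\C$ by $\R$.

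The only genuinely delicate point is the bookkeeping in the identification step: verifying that passing to $\tau$-fixed points commutes with the homotopy-colimit/CW construction, which is precisely what lets the preceding claim be applied levelwise. Once $X^{\R}(A)\cong X(A)^{\tau}$ is in place, properness is immediate and the equivariant contractibility is a faithful, $\tau$-equivariant copy of the argument proving Theorem~\ref{Tits}.
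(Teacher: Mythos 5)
Your proposal is correct and follows essentially the same route as the paper: the identification $X^{\R}(A)\cong X(A)^{\tau}$ is deduced levelwise from the preceding claim about $\{G(A)/G_J(A)\}^{\tau}$, and the equivariant contractibility is obtained by observing that the deformation retractions in the proof of Theorem~\ref{Tits} are $\tau$-equivariant (acting only through the simplicial factor $|\gC(A)|$) and hence restrict to the fixed points. You have simply supplied more of the bookkeeping that the paper leaves implicit.
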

 \begin{proof}
 The space $X^{\R}(A)$ is clearly a finite, proper, $K^{\R}(A)$-CW complex. Also, the previous claim shows that $X^{\R}(A)$ is equivariantly homeomorphic to $X(A)^{\tau}$. It remains to show that it is $K^{\R}_J(A)$-equivariantly contractible for all $J \in \gC(A)$. This follows immediately once we notice that the proof of contractibility given in theorem \ref{Tits}, is $\tau$ equivariant. 
 \end{proof}
 \begin{remark} The space $X^{\R}(A)$ is in fact equivalent to the classifying space of proper $K^{\R}(A)$-actions: it sufficies to show that any compact subgroup of $K^{\R}(A)$ is conjugate to a subgroup of  $K^{\R}_J(A)$ for some $J \in \gC(A)$. This follows from the general properties of Buildings \cite{D4}. We thank P.E. Caprace for pointing this out to us. 
 \end{remark}
 
 \noindent
 {\bf The group $\mbox{E}_{10}$}.
 
 \noindent
 
 \noindent
There has been recent interest in the real form of the Kac-Moody group $\mbox{E}_{10}$. This group is of extended compact type, with $J_0$ being the unique extended node so that $(a_{i,j})_{i,j\in I_0}$ is the affine type generalized Cartan matrix for $\mbox{E}_9$.

\noindent
 It appears that the real form of $\mbox{E}_{10}$ encodes the symmetries of the dynamics ensuing from 11-dimensional supergravity \cite{DHN, DN} \footnote{We thank Hisham Sati for explaining this to us}. The dynamics may be (conjecturally) expressed in terms of null geodesics in the Lorentzian space $G^{\R}(\mbox{E}_{10})/K^{\R}(\mbox{E}_{10})$. The group $\mbox{E}_{10}$ also appears in a different fashion when 11-dimensional supergravity is compactified on a 10-dimensional torus,  \cite{J, BGH}. There is a conjectural description of the latter in terms of "billiards" in the Weyl chamber of $\mbox{E}_{10}$ \cite{DHN, DN, BGH}. 
 
 \medskip
 \noindent
 Since $X^{\R}(\mbox{E}_{10})$ is the universal proper $K^{\R}(\mbox{E}_{10})$-space, it seems reasonable to guess that the configuration space of the dynamics indicated above (perhaps away from singularities), may be related to the space $X^{\R}(\mbox{E}_{10})$. 

\medskip
\noindent
The affine type that was just discussed is relevant in field theory. The Dominant K-theory of the affine space $X(A)$ can be seen as the receptacle for the D-brane charge in the $G/G$-gauged WZW model.  It can also be identified with the Verlinde algebra of a modular functor that corresponds to the categorification of Chern-Simons theory. 

\medskip
\noindent
Invoking theorem \ref{excpct} and remark \ref{rmk1} for the group $\mbox{E}_{10}$, we have:
\begin{thm} Let $A = \mbox{E}_{10}$ denote the Kac-Moody group as above, then the group $K_{I_0}(\mbox{E}_{10})$ is the (energy extended) Affine group $\mbox{E}_9$, and the following restriction map, is an injection:
 \[ \varphi^*: \tilde{\Kth}^*_{E_{10}}(X(\mbox{E}_{10})) \longrightarrow {}^{A} \tilde{\Kth}^*_{E_9}(X(\mbox{E}_9)). \]
where reduced Dominant K-theory denote the kernel along any orbit $T$-orbit. Moreover, the maximal dominant  $\mbox{E}_{10}$-representation restricts to a dominant $\mbox{E}_9$-representation, inducing a map:  
 \[  \mbox{St}: \, {}^A \tilde{\Kth}^*_{E_9}(X(\mbox{E}_9)) \longrightarrow \tilde{\Kth}^*_{E_9}(X(\mbox{E}_9)). \]
The map $\mbox{St}$ is injective, and has image given by the ideal generated by characters that correspond to weights in the interior of the (energy extended) Weyl chamber of $\mbox{E}_9$, which are in the Weyl-orbit of dominant characters of $\mbox{E}_{10}$. 

\noindent
The image of $\tilde{\Kth}^*_{E_{10}}(X(\mbox{E}_{10}))$ under $\mbox{St}$ is given by those characters of the form described above that also belong to the antidominant Weyl chamber of $K_{J_0}(\mbox{E}_{10})$.
 \end{thm}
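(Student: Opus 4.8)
The plan is to obtain the theorem as the specialization of Theorem~\ref{excpct} --- together with the surrounding description of the maps $\mbox{r}$, $\mbox{St}$ and the remark following it --- to the generalized Cartan matrix $A=\mbox{E}_{10}$, once the combinatorial data of $\mbox{E}_{10}$ has been pinned down and the Levi factor $K_{I_0}(\mbox{E}_{10})$ identified. First I would record that $\mbox{E}_{10}$ is of extended compact type with the stated decomposition $I=I_0\coprod J_0$; this is immediate from the Dynkin diagram of $\mbox{E}_{10}$ (see also \cite{D3}(6.9)): $J_0$ is the unique over-extended node, $A_0=(a_{i,j})_{i,j\in I_0}$ is the affine generalized Cartan matrix of $\mbox{E}_9$, and the submatrix $(a_{i,j})_{i,j\in J}$ is of non-finite type precisely when $I_0\subseteq J$ --- the only such $J$ being $I_0$ (affine) and $I$ (indefinite), while every $J$ omitting a node of $I_0$ is a disjoint union of finite-type diagrams. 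Since $|I_0|=9=n+1$ with $n=8>1$, the hypotheses of Theorem~\ref{excpct} hold.

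Next I would identify $K_{I_0}(\mbox{E}_{10})$ with the (energy-extended) affine Kac-Moody group $\mbox{E}_9$. By the conventions of Section~2, $K_{I_0}(\mbox{E}_{10})$ is a reductive Kac-Moody group with Cartan datum $A_0$; since $A_0$ is the untwisted affine matrix associated to the compact simply connected group $\mbox{E}_8$, this group is $\mathbb{T}\ltimes\tilde{\mbox{L}}\mbox{E}_8$ in the loop-group description of \cite{Ku}, with $\tilde{\mbox{L}}\mbox{E}_8$ the universal central extension of the polynomial loop group and $\mathbb{T}$ the rotation (``energy'') circle exponentiating the derivation direction of the affine Cartan subalgebra. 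The ranks are consistent: the maximal torus of $K(\mbox{E}_{10})$ has rank $2|I|-\rank A=20-10=10$, equal to $2|I_0|-\rank A_0=18-8=10$, so $K_{I_0}(\mbox{E}_{10})$ carries no extra torus factor beyond the affine one. Here I would also fix the dictionary translating ``(regular) dominant character of $K_{I_0}(\mbox{E}_{10})$'' into ``energy-extended (regular) dominant character of $\mbox{E}_9$'', i.e. a dominant positive-energy weight shifted by the Weyl element $\rho_{I_0}$.

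With these two preliminaries in place, the statement follows directly from Theorem~\ref{excpct} and its accompanying remark (and the form of $\mbox{r}$ and $\mbox{St}$ recorded in Section~2): $X(A_0)=X(\mbox{E}_9)$ is the classifying space for proper $K_{I_0}(\mbox{E}_{10})$-actions; the restriction map $\mbox{r}$ is injective; restricting the maximal dominant $\mbox{E}_{10}$-representation to $K_{I_0}(\mbox{E}_{10})$ yields the map $\mbox{St}$, which is injective with image the regular dominant characters of $K_{I_0}(\mbox{E}_{10})$ lying in the $W(\mbox{E}_{10})$-orbit of dominant characters of $\mbox{E}_{10}$; and the image of $\tilde{\Kth}^*_{E_{10}}(X(\mbox{E}_{10}))$ under $\mbox{St}$ is cut out inside this by the additional requirement of being antidominant for $K_{J_0}(\mbox{E}_{10})$. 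Finally, passing from the unreduced groups of Theorem~\ref{excpct} to the reduced groups $\tilde{\Kth}^*$ merely discards the degree-$0$ summand indexed by $K=I$, the one-dimensional characters of $K(\mbox{E}_{10})$, which are detected on every orbit $\mbox{E}_{10}/T$ exactly as in Theorem~\ref{thm1}; so the reduced version is immediate.

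The genuine content already sits in Theorem~\ref{excpct}, so I do not expect a serious obstacle; the only point requiring care is the bookkeeping of the second step --- matching the intrinsic representation theory of the reductive Kac-Moody group $K_{I_0}(\mbox{E}_{10})$ with the positive-energy representation theory of the affine group $\mbox{E}_9$, and in particular checking that the $\rho$-shift conventions and the grading by the rotation circle agree on the two sides, so that ``regular dominant'' and ``antidominant for $K_{J_0}$'' recover their usual meaning for positive-energy representations. Once that dictionary is fixed, nothing further is required.
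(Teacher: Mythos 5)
Your proposal is correct and follows exactly the paper's route: the paper simply invokes Theorem \ref{excpct} for $A=\mbox{E}_{10}$ after noting that $\mbox{E}_{10}$ is of extended compact type with $J_0$ the over-extended node and $A_0$ the affine matrix of $\mbox{E}_9$, which is precisely your argument. Your additional bookkeeping (the rank check identifying $K_{I_0}(\mbox{E}_{10})$ with the energy-extended affine group, and the passage from the unreduced groups of Theorem \ref{excpct} to the reduced ones) supplies details the paper leaves implicit but introduces nothing beyond its approach.
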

\begin{remark}
Consider the action map : $K(\mbox{E}_{10}) \times_{K^{\R}(E_{10})} X^{\R}(\mbox{E}_{10}) \longrightarrow X(\mbox{E}_{10})$. This induces:
\[ \Kth^*_{K(E_{10})} (X(\mbox{E}_{10})) \longrightarrow {}^{A} \Kth^*_{K^{\R}(E_{10})} (X^{\R}(\mbox{E}_{10})), \]
which allows us to construct elements in ${}^A \tilde{\Kth}_{K^{\R}(E_{10})} (X^{\R}(\mbox{E}_{10}))$ via the previous theorem. The behaviour of this map is still unclear to us. 
\end{remark}

\newpage
\section{Dominant K-homology}
\noindent
To give a meaningful definition of the Dominant K-homology groups, we need the definition of the equivariant dual of a finite, proper $K(A)$-CW spectrum. Our definition is motivated by \cite{Kl}:
\begin{defn} \label{dual} For a finite, proper $K(A)$-CW spectrum $X$, define the equivariant dual of $X$ as the homotopy fixed point spectrum:
\[ DX = \Map(X, K(A)_+)^{hK(A)},  \]
where $K(A)_+$ denotes the suspension spectrum: $\Sigma^{\infty} K(A)_+$ with a right $K(A)$-action. 
\end{defn}

\noindent
For a proper $K(A)$-CW spectrum $X$, we show below that $DX$ has the equivariant homotopy type of a finite, proper $K(A)$-CW spectrum via the residual left $K(A)$-action on $K(A)_+$. For example, let $X$ be a single proper orbit $K(A)_+\wedge_{G}S^0$ for some compact Lie subgroup $G \subseteq K(A)$. In this case, we will show below that $DX$ is equivalent to the proper $K(A)$-spectrum $K(A)_+\wedge_{G}S^{\gG}$, where $S^{\gG}$ denotes the one point compactification of the adjoint representation of $G$ on its Lie algebra. 
\begin{defn} \label{homology} The Dominant K-homology groups of a finite, proper $K(A)$-CW spectrum $X$ are defined as:
\[ \Kth^{K(A)}_k(X) = \Kth_{K(A)}^{-k}(DX).  \]
We may extend this definition to arbitrary proper $K(A)$-CW complexes by taking direct limits over sub skeleta. 
\end{defn}

\begin{claim} Let $G \subseteq K(A)$ be a compact Lie subgroup, let $S^{\gG}$ denote the one point compactification of the adjoint representation of $G$ on its Lie algebra. Then the equivariant dual of a proper orbit: $X = K(A)_+\wedge_G S^0$ is given by $DX \simeq K(A)_+ \wedge_G S^{\gG}$.
\end{claim}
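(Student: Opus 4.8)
The plan is to reduce the computation of $DX = \Map(K(A)_+ \wedge_G S^0, K(A)_+)^{hK(A)}$ to an equivariant Spanier--Whitehead duality statement for the compact group $G$, and then transport it back along the induction functor. First I would simplify the mapping spectrum: since $X = K(A)_+\wedge_G S^0$ is induced up from $G$, one has an adjunction identification $\Map_{K(A)}(K(A)_+\wedge_G S^0, Z) \simeq \Map_G(S^0, \mathrm{res}_G Z)$ for any $K(A)$-spectrum $Z$; applying this with the mapping-spectrum/fixed-point structure unwound carefully, $DX$ should be expressible in terms of the $G$-equivariant dual of $S^0$ inside $K(A)_+$, i.e. roughly $DX \simeq K(A)_+ \wedge_G D_G(S^0)$ where $D_G$ denotes an appropriate $G$-equivariant Spanier--Whitehead-type dual taken relative to the residual left action. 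The point is that the $K(A)$-homotopy-fixed-point construction, applied to the free right $K(A)$-action on $K(A)_+$, collapses to the residual left action and thereby to genuine $G$-equivariant data, since $G$ is compact.

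The core input is then the classical fact that for a compact Lie group $G$, the $G$-space $G$ (with adjoint action) is $G$-equivariantly "self-dual up to a twist by its Lie algebra representation": the equivariant Spanier--Whitehead dual of $G_+$ is $G_+ \wedge S^{-\gG}$, equivalently $D_G(S^0)$ computed in the relevant model is $S^{\gG}$, reflecting that $G$ is a closed parallelizable manifold with tangent bundle $G \times \gG$ via left translation, so its Atiyah dual is $G_+^{-TG} = G_+ \wedge S^{-\gG}$. I would invoke this (or re-derive it from the embedding of $G$ in a representation sphere and Atiyah duality, tracking the normal bundle versus tangent bundle) to conclude $D_G(S^0) \simeq S^{\gG}$ at the level of the induced $K(A)$-spectrum. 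Combining with the reduction of the first step gives $DX \simeq K(A)_+ \wedge_G S^{\gG}$. One should check that the $G$-action on $S^{\gG}$ appearing here is indeed the one-point compactification of the adjoint representation and not, say, its dual --- but the invariant inner product on $\gG$ (available since $G$ is compact) identifies $\gG$ with $\gG^*$ $G$-equivariantly, so this ambiguity is harmless.

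The main obstacle I anticipate is making the homotopy-fixed-point manipulation rigorous in the \emph{proper} equivariant setting: $K(A)$ is not compact, not even locally compact, so one cannot cite the usual genuine-equivariant duality machinery of \cite{LMS} wholesale, and the definition $DX = \Map(X,K(A)_+)^{hK(A)}$ is tailored (following \cite{Kl}) precisely to sidestep that. The delicate point is that $\Map(K(A)_+ \wedge_G S^0, K(A)_+)$ carries two commuting $K(A)$-actions (one from each copy of $K(A)_+$), and one must verify that taking homotopy fixed points for one of them, after using the adjunction for the induced spectrum, genuinely produces the $G$-induced spectrum $K(A)_+\wedge_G S^{\gG}$ with the correct residual action, rather than something only stably equivalent after smashing with $EK(A)_+$. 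I would handle this by working simplicially/levelwise with the bar-construction model of $K(A)_+$ as a free right $K(A)$-spectrum, identifying $\Map(-,K(A)_+)^{hK(A)}$ with a geometric fixed-point-free construction, and then quoting the compact-group duality \cite{LMS} only after everything has been pushed down to the compact subgroup $G$. The remaining steps --- the adjunction identity, the invariant-metric identification $\gG \cong \gG^*$, and the Atiyah-dual computation for $G$ --- are routine and I would not expand them.
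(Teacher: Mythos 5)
Your proposal is correct and follows essentially the same route as the paper: reduce via the induction--restriction adjunction to $DX \simeq K(A)_+^{hG}$, then invoke the compact Lie group duality statement (Klein's $G\times G$-equivalence $\Map(G_+, S^{\gG}) \simeq G_+$, i.e.\ Atiyah duality for the parallelizable manifold $G$) together with the freeness of $K(A)$ as a $G$-space to identify $K(A)_+^{hG}$ with $\Map(G_+, K(A)_+\wedge_G S^{\gG})^{hG} \simeq K(A)_+\wedge_G S^{\gG}$. The technical caveats you raise about the homotopy-fixed-point manipulations are reasonable, and the paper handles them exactly as you suggest, by pushing everything down to the compact subgroup $G$ before quoting duality.
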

\begin{proof} 
Consider the sequence of equalities:
\[ DX = \Map(K(A)_+\wedge_G S^0, K(A)_+)^{hK(A)} =  \Map(S^0, K(A)_+)^{hG}  = K(A)_+^{hG}. \]
Now recall \cite{Kl}, that there is a $G \times G$-equivalence of spectra: $\Map(G_+, S^{\gG}) \simeq G_+ $. Since $G_+$ is a compact Lie group, and $K(A)$ is a free $G$-space, we get an $K(A) \times G$-equivalence:
\[ \Map(G_+, K(A)_+\wedge_G S^{\gG}) = K(A)_+\wedge_G \Map(G_+,S^{\gG}) \simeq K(A)_+. \]
Taking homotopy fixed points, we get the required $K(A)$-equivalence:
\[ DX = K(A)_+^{hG} \simeq \Map(G_+, K(A)_+\wedge_G S^{\gG})^{hG} \simeq K(A)_+\wedge_G S^{\gG}, \]
which completes the proof. 
\end{proof} 

 \noindent
Now recall that $X(A)$ was a finite homotopy colimit of proper homogeneous spaces of the form $K(A)/K_J(A)$. Therefore, we may write the equivariant dual of $X(A)$ as the following homotopy inverse limit in the category of proper $K(A)$-CW spectra:
\[ DX(A) = {\rm holim}_{J \in \gC(A)} \; K(A)_+ \wedge_{K_J(A)} S^{\gG(J)},  \]
where $\gG(J)$ denotes the Lie algebra of $K_J(A)$. This is a finite $K(A)$-CW spectrum, and its Dominant K-cohomology is defined as the Dominant K-homology of $X(A)$. We have:
 \begin{thm} Let $K(A)$ be a Kac-Moody group of compact type. Assume that $K(A)$ is not of finite type, and let $r$ be the rank of $K(A)$. Define the reduced Dominant K-homology $\tilde{\Kth}^{K(A)}_*(X(A))$, to be the kernel of the pinch map $\pi$. Then we have an isomorphism of graded groups:
\[ \tilde{\Kth}^{K(A)}_*(X(A)) = \mbox{R}_T^{\varnothing}[\beta^{\pm1}], \]
where $\mbox{R}_T^{\varnothing}$ is graded so as to belong entirely in degree $-r$, and $\beta$ is the Bott class in degree 2. Moreover, the identification of $\tilde{\Kth}^{K(A)}_{-r}(X(A))$ with $\mbox{R}_T^{\varnothing}$ is induced by the inclusion of any $T$-orbit.

\end{thm}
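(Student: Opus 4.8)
The plan is to compute $\Kth^{-*}_{K(A)}(DX(A))$, where by the discussion preceding the theorem
\[ DX(A) = {\rm holim}_{J \in \gC(A)} \; K(A)_+ \wedge_{K_J(A)} S^{\gG(J)}. \]
Dually, since $X(A) = {\rm hocolim}_{J\in\gC(A)}\, K(A)/K_J(A)$, Dominant K-homology carries a spectral sequence whose $E^2$-term is the $p$-th left derived functor of $\varinjlim$ over $\gC(A)$ applied to $J\mapsto \Kth^{K(A)}_q(K(A)/K_J(A)_+)$, converging to $\Kth^{K(A)}_{p+q}(X(A))$. This is the homological mirror of the spectral sequence that proved Theorem~\ref{thm1}, and the strategy is to run the same three-case argument. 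First I would identify the coefficients: by Definition~\ref{homology} and the duality for orbits, $\Kth^{K(A)}_q(K(A)/K_J(A)_+) = {}^A\Kth^{-q}_{K_J(A)}(S^{\gG(J)})$, and by the Dominant K-theory Thom isomorphism (Theorem~\ref{Thom}) this is a free $\mbox{DR}_{K_J(A)}$-module of rank one on the Thom class $\lambda_J$ of degree $r$; hence as a graded group it equals $\mbox{DR}_{K_J(A)}[\beta^{\pm1}]$ with $\mbox{DR}_{K_J(A)}$ placed in degree $-r$. For $J \subseteq J'$ the structure map is the umkehr (integration-along-the-fibre) map of the bundle projection $K(A)/K_J(A) \to K(A)/K_{J'}(A)$, which by Remark~\ref{PF} and transitivity of integration along fibres is the (surjective) relative Dirac-induction homomorphism. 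Thus the $E^2$-page is concentrated in the rows $q \equiv r \pmod 2$ and equals $(\varinjlim_p \mbox{DR}_\bullet)[\beta^{\pm1}]$ for the functor $\mbox{DR}_\bullet : J\mapsto \mbox{DR}_{K_J(A)}$ on $\gC(A)$ with Dirac-induction maps.

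The heart of the argument is then the evaluation of $\varinjlim_p \mbox{DR}_\bullet$. As in Theorem~\ref{thm1}, I would express this as the $W(A)$-equivariant homology of the Davis complex $\Sigma$ (Remark~\ref{DC}) with coefficients assembled from $\mbox{DR}_T$; the key device is that the Thom class $\lambda_J$ has highest weight $\rho_J$, so that ${}^A\Kth^*_{K_J(A)}(S^{\gG(J)})$ is naturally identified with the \emph{regular} dominant character ring of $K_J(A)$, and under this identification the Dirac-induction structure maps become tractable and the weight-type decomposition of $\mbox{DR}_T$ used in Theorem~\ref{thm1} can be reinstated. Running the three cases $K=I$, $K=\varnothing$, $K\notin\{\varnothing,I\}$ then proceeds exactly as before, with the compactly supported cohomology groups $\mbox{H}^*_c(\Sigma/W_K(A))$ of Theorem~\ref{thm1} replaced by the dual homology groups, read off from Davis \cite{D1,D2}: the case $K=I$ contributes a copy of $\mbox{R}_T^I$ realised by the one-dimensional highest weight representations, the case $K=\varnothing$ contributes a single copy of $\mbox{R}_T^\varnothing$ sitting in the filtration degree that places it in total degree $-r$, and for $K\notin\{\varnothing,I\}$ the contribution vanishes by the sign argument (for $i\in K$, the reflection $r_i$ reverses a generator). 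In particular the spectral sequence collapses at $E^2$.

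For the remaining assertions: the lowest filtration stage is the image of $\Kth^{K(A)}_*(K(A)/T_+) = \mbox{DR}_T[\beta^{\pm1}]$ under the inclusion of the orbit $K(A)/T = K(A)/K_\varnothing(A)$ of the initial vertex, and unwinding the description of $\varinjlim_0 \mbox{DR}_\bullet$ shows that $\mbox{R}_T^\varnothing \subseteq \mbox{DR}_T$ maps isomorphically onto the $K=\varnothing$-summand in degree $-r$; this is the stated identification via the $T$-orbit. One then checks that this summand is exactly $\ker\bigl(\pi_* : \Kth^{K(A)}_*(X(A)) \to \Kth^{K(A)}_*(K(A)_+\wedge_T S^n)\bigr)$: the $K=I$-part is detected on a single orbit and so survives $\pi_*$, while the $K=\varnothing$-part is annihilated since it is carried by the subcomplex $\pi$ collapses (the complement of the top free cell) — the homological transcription of the fact, used in Theorem~\ref{thm1}, that $\pi$ realises the bottom-filtration summand. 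Bott periodicity then promotes the degree-$(-r)$ statement to the claimed isomorphism of graded groups $\tilde{\Kth}^{K(A)}_*(X(A)) = \mbox{R}_T^\varnothing[\beta^{\pm1}]$.

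The main obstacle I anticipate is exactly the point flagged above. Unlike in Theorem~\ref{thm1}, where the coefficient system transforms by restriction maps (which respect the decomposition $\mbox{DR}_T = \bigoplus_K \Z[W(A)/W_K(A)]\otimes \mbox{R}_T^K$), for K-homology it transforms by Dirac-induction/umkehr maps, which carry $\rho$-shifts and do not respect that decomposition in the naive way. Setting up the correct re-indexing of the coefficients — via the regular-dominant reinterpretation supplied by the Thom classes $\lambda_J$, together with the compatibility $\lambda_{J'} \simeq \lambda_J \otimes \mathbb{S}\bigl(\gG(J')/\gG(J)\bigr)$ of successive Thom classes — and then verifying that the derived colimits $\varinjlim_p \mbox{DR}_\bullet$ are computed by the homology theory on $\Sigma$ genuinely dual to the $\mbox{H}^*_c(\Sigma/W_K(A))$ appearing in Theorem~\ref{thm1} (so that the $K\notin\{\varnothing,I\}$ summands still vanish and the $K=\varnothing$ summand lands in the single degree $-r$ rather than being shifted) is where the real work lies.
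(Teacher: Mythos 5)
Your architecture --- dualize $X(A)$ to the homotopy limit of Thom spectra, use Theorem \ref{Thom} to identify the $E_2$-term with the derived colimits of the covariant functor $\mbox{DR}_{\bullet}$ under Dirac induction, decompose by weight type, run three cases, and read off the reduced part as the bottom filtration --- is exactly the paper's. But both places where you defer or merely assert are where the proof actually lives, and one of them you have misjudged. On the first (the coefficient re-indexing you flag as ``the real work''): the paper does not compute $\varinjlim {}^p \mbox{DR}_{\bullet}$ by dualizing the groups $\mbox{H}^*_c(\Sigma/W_K(A))$. It decomposes the covariant functor as $\mbox{DR}_J = \bigoplus_K \iota_J(\mbox{DR}_T^K)$ (images under pushforward, not invariants) and splits each summand $\tilde{\mbox{D}}\mbox{R}_{\bullet}^K$ off the auxiliary functor $J \mapsto \mbox{R}_T^K \otimes \tilde{\Z} \otimes_{\Z[W_K(A)]} \Z[W(A)/W_J(A)]$, with $\tilde{\Z}$ the sign representation of $W_K(A)$. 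Its derived colimits are the homology of $\mbox{R}_T^K \otimes \tilde{\Z} \otimes_{\Z[W_K(A)]} \mbox{C}_*$, which by $W_K(A)$-equivariant contractibility of $\Sigma$ reduces to $\mbox{R}_T^K \otimes \tilde{\Z} \otimes_{\Z[W_K(A)]} \Z$ in degree $0$; for $\varnothing \neq K \subset I$ this is $2$-torsion and hence maps trivially to its torsion-free retract. So the sign enters through the coefficient functor and the vanishing is a torsion-versus-torsion-free argument, not a reversal of a generator of a (co)homology group of $\Sigma/W_K(A)$. Moreover the answer is not the degree-reflected mirror of Theorem \ref{thm1}: $K=\varnothing$ lands in filtration $0$ (total degree $-r$), while $K=I$ lands in filtration $n$, because $\iota_J$ annihilates $\mbox{R}_T^I$ for every nonempty $J$ and one is left computing the derived colimit of a functor concentrated at the initial object of $\gC(A)$.

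The second gap is more serious because you do not flag it. You assert collapse at $E_2$ and attribute the $K=I$ column to the one-dimensional highest weight representations; that is the cohomological edge argument from Theorem \ref{thm1}, which produces classes in $\Kth^0_{K(A)}(X(A))$, not in K-homology, and the paper explicitly notes it is unavailable here. With nonzero columns only in filtrations $0$ and $n$ and $2$-periodic coefficients, there is a potential $d_n$ from the $\mbox{R}_T^I$ column into the $\mbox{R}_T^{\varnothing}$ column regardless of parity, and were it nonzero the kernel of $\pi_*$ would be a proper quotient of $\mbox{R}_T^{\varnothing}$, falsifying the statement. The paper rules this out by lifting $\mbox{R}_T^I$ along the action map $\varphi : K(A) \times_{N(T)} \Sigma \rightarrow X(A)$, using that the $N(T)$-equivariant dual of the pinch map induces a nonequivariant equivalence $S^{r-n} \simeq S^0 \wedge_{N(T)} D\Sigma$ (because $S^0 \wedge_{W_J(A)} S^{\gH_{\R}}$ is contractible for $J \neq \varnothing$). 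Some argument of this kind is indispensable and does not follow from anything in your proposal.
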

\begin{proof}
The description of the equivariant dual of $X(A)$ as a homotopy inverse limit, yields a spectral sequence:
\[ E_2^{p,*} = \varinjlim  {}^p \Kth_{K(A)}^*(K(A)_+\wedge_{K_{\bullet}(A)}S^{\gG(\bullet)}) = \varinjlim  {}^p \Kth_{K_{\bullet}(A)}^*(S^{\gG(\bullet)})\Rightarrow \Kth^{K(A)}_{p-*}(X(A)). \]
To simplify the notation, we will use the Thom isomorphism theorem \ref{Thom} to identify the (covariant) functor $J \mapsto {}^A \Kth_{K_J(A)}^r(S^{\gG(J)})$ with $\mbox{DR}_{\bullet}$. Hence, we may write our spectral sequence as:
\[ E_2^{p,*} = \varinjlim  {}^p \mbox{DR}_{\bullet}[\beta^{\pm}] \Rightarrow \Kth^{K(A)}_{p-r}(X(A))[\beta^{\pm1}]. \]
The strategy in proving the above theorem is not new.  We decompose the functor $\mbox{DR}_{\bullet}$ into summands $\tilde{\mbox{D}}\mbox{R}_{\bullet}^K$ indexed over $K \subseteq I$, and then consider two separate cases:

\medskip
\noindent
Recall the decomposition of the ring $\mbox{DR}_T$ indexed by subsets $K \subseteq I$: $\mbox{DR}_T = \bigoplus \mbox{DR}_T^K$. This induces a decomposition of the functor $\mbox{DR}_{\bullet}$ using the push forward map \ref{PF}: 
\[ \mbox{DR}_{\bullet} = \bigoplus \tilde{\mbox{D}}\mbox{R}_{\bullet}^K, \quad \mbox{where} \quad \tilde{\mbox{D}}\mbox{R}_J^K = {\iota_J}(\mbox{DR}_T^K). \]
We now proceed as before by considering two cases:

\medskip
\noindent
{\bf The case K=I}.
It is clear that the functor $\tilde{\mbox{D}}\mbox{R}_{\bullet}^I$ takes the value zero on all nonempty proper subsets $J$, and that $\tilde{\mbox{D}}\mbox{R}_{\varnothing}^I = \mbox{R}_T^I$. Hence we get 
\[ \varinjlim {}^p \tilde{\mbox{D}}\mbox{R}_{\bullet}^I =  0, \, \,  \mbox{if} \, \, p\neq n, \, \,  \mbox{and}   = \mbox{R}_T^I \, \, \mbox{if} \, \, p=n. \]
Since $\mbox{R}_T^I$ represents the highest filtration in the spectral sequence, it admits a map to $\mbox{DR}_T$ via $\pi$. At the end of this argument, we shall show that the elements $\mbox{R}_T^I$ are permanent cycles representing elements in $\Kth^{K(A)}_{n-r}(X(A))$. 

\medskip
\noindent
{\bf The case K $\subset$ I}.
To calculate higher derived colimits of the remaining summands: $\tilde{\mbox{D}}\mbox{R}_{\bullet}^K$, we begin by first spliting the functor $\tilde{\mbox{D}}\mbox{R}_{\bullet}^K$ out of another functor $\mbox{E}_{\bullet}^K$, where $K \subset I$ is any proper subset. The functor $\mbox{E}_{\bullet}^K$ is defined by:
\[   J \longmapsto \mbox{E}_J^K = \mbox{R}_T^K \otimes \tilde{\Z} \otimes_{\Z[W_K(A)]} \Z[W(A)/W_J(A)],  \]
where $\tilde{\Z}$ stands for the sign representation of $W_K(A)$. One has a map from the functor $\tilde{\mbox{D}}\mbox{R}_{\bullet}^K$ to $\mbox{E}_{\bullet}^K$ defined as follows: 

\noindent
Let $\mbox{L}_{\mu}$ be an irreducible representation in $\tilde{\mbox{D}}\mbox{R}_J^K$ with highest weight $\mu$. Assume that we have the equality: $\mu + \rho_J = w^{-1}(\tau)$, for some $\tau \in \mbox{D}_K$, and $w \in W(A)$. Define a map 
\[ \tilde{\mbox{D}}\mbox{R}_J^K \longrightarrow \mbox{E}_J^K, \quad \quad \mbox{L}_{\mu} \longmapsto (-1)^we^{\tau} \otimes 1 \otimes  w,  \]
where $(-1)^w$ denotes the sign of the element $w$, and $e^{\tau}$ stands for a character in $\mbox{R}_T^K$. It is easy to see that this map well defined and functorial. Moreover, one may check that the retraction is given by:
\[ \mbox{E}_J^K \longrightarrow \tilde{\mbox{D}}\mbox{R}_J^K, \quad \quad e^{\tau} \otimes 1 \otimes w \longmapsto (-1)^w \iota_J (w^{-1}e^{\tau}). \]
Hence, $\varinjlim {}^p \tilde{\mbox{D}}\mbox{R}_{\bullet}^K$ splits from the homology of the chain complex $\mbox{R}_T^K \otimes \tilde{\Z}\otimes_{\Z[W_K(A)]} \mbox{C}_*$, where $\mbox{C}_*$ is the simplicial chain complex of the Davis complex $\Sigma$. Since $W_K(A)$ is a finite subgroup, this simplicial complex is $W_K(A)$-equivariantly contractible. Thus $\mbox{C}_*$ is $W_K(A)$-equivariantly equivalent to the constant complex $\Z$ in 
dimension zero. 

\medskip
\noindent
It follows that $\varinjlim {}^p \tilde{\mbox{D}}\mbox{R}_{\bullet}^K = 0$ if $p>0$, and $ \varinjlim  \tilde{\mbox{D}}\mbox{R}_{\bullet}^K$ splits from the group $\mbox{R}_T^K \otimes \tilde{\Z} \otimes_{\Z[W_K(A)]} \Z$. Furthermore, if $K$ is nonempty, then notice that the group $\mbox{R}_T^K \otimes \tilde{\Z} \otimes_{\Z[W_K(A)]} \Z$ is two torsion, hence 
the map to $\tilde{\mbox{D}}\mbox{R}_{\bullet}^K$ is trivial. This shows that $\varinjlim \tilde{\mbox{D}}\mbox{R}_{\bullet}^K = 0$ if $K$ is not the empty set. Finally, for $K = \varnothing$, one simply observes that the above retraction is an isomorphism, and hence $\varinjlim \tilde{\mbox{D}}\mbox{R}_{\bullet}^{\varnothing} = \mbox{R}_T^{\varnothing}$.

\medskip
\noindent
To complete the proof, it remains to show that all elements in $\mbox{R}_T^I$ discussed in the first case are permanent cycles. Unfortunately, we cannot invoke a geometric argument as before. Instead we consider the $K(A)$-equivariant action map:
\[ \varphi: K(A) \times_{N(T)} \Sigma \longrightarrow X(A),  \]
where $N(T)$ stands for the normalizer of $T$ in $K(A)$. To show that $\mbox{R}_T^I$ consists of permanent cycles, it is sufficent to show that it belongs to the image of $\varphi_*$:
\[ \varphi_* : {}^A \Kth^{N(T)}_{n-r}(\Sigma) = \Kth^{K(A)}_{n-r}(K(A) \times_{N(T)} \Sigma) \longrightarrow \Kth^{K(A)}_{n-r}(X(A)). \]
Consider now the $N(T)$-equivariant pinch map $\pi: \Sigma \longrightarrow W(A)_+ \wedge S^n$.
Taking the $N(T)$-equivariant dual of $\pi$ yields $D\pi : W(A)_+\wedge S^{r-n} \longrightarrow D\Sigma$
where $r$ is the rank of the maximal torus $T$. We have a description of $D\Sigma$ as a homotopy limit in the category of proper $N(T)$-CW spectra: 
\[ D\Sigma = {\rm holim}_{J \in \gC(A)} \; W(A)_+ \wedge_{W_J(A)} S^{\gH_{\R}}. \]
Taking $N(T)$ orbits of $D\Sigma$ yields a non-equivariant CW spectrum: 
\[ S^0 \wedge_{N(T)} D\Sigma = {\rm holim}_{J \in \gC(A)} \; S^0 \wedge_{W_J(A)} S^{\gH_{\R}}.  \]
Notice that the orbit space $\gH_{\R}/W_J(A)$ can be identified with a cone within $\gH_{\R}$, and hence, the spectra $S^0 \wedge_{W_J(A)} S^{\gH_{\R}}$ are contractible for all $J \neq \varnothing$. It is now easy to see that the dual pinch map induces an equivalence of non-equivariant spectra: 
\[ D\pi : S^{r-n} = S^0 \wedge_{N(T)} W(A)_+ \wedge S^{r-n}  \longrightarrow S^0 \wedge_{N(T)}  D\Sigma. \] 
Now given any weight $\mu \in \mbox{D}_I$, let $\mbox{L}_{\mu}$ be the one dimension highest weight representation corresponding to $\mu$. These represent canonical elements of ${}^A \Kth_{T}^{r-n}(S^{r-n})$ which extend to corresponding elements in ${}^A \Kth_{N(T)}^{r-n}(D\Sigma)$ using the above equivalence.

\noindent
This shows that elements of $\mbox{R}_T^I$ lift to elements in ${}^A \Kth^{N(T)}_{n-r}(\Sigma)$, thus completing the proof. 

\end{proof}

\newpage
\appendix
\section{ A compatible family of metrics}
\noindent
Given an indecomposable, symmetrizable generalized Cartan matrix $A$, let us fix a non-degenerate, invariant, bilinear form $\mbox{B}$ on the Lie algebra $\gG(A)$, \cite{Ku}. This form, however, is not positive definite in general. In this section we will explore the ways in which one can perturb $\mbox{B}$ into positive definite bilinear forms in an equivariantly compatible way (to be made precise below). 

\medskip
\noindent
Let $J \in \gC(A)$ be any subset, and let $K_J(A)$ be the corresponding compact subgroup of $K(A)$. We may decompose the Lie algebra $\gG(A)$ as:
\[ \gG(A) = \gZ_J \oplus \gH_J \oplus \eta_- \oplus \eta_+,  \]
where $\gH_J$ is the subspace of co-roots generated by $h_j$ for $j \in J$, and $\gZ_J$ is the centralizer of $K_J(A)$ given by:
\[ \gZ_J = \{ h \in \gH \, | \, \alpha_j(h) = 0, \, \forall \, j \in J \}. \]
It is easy to see that $\mbox{B}$ restricts to a positive form on the $K_J(A)$-invariant subspace $\gG_J$:
\[ \gG_J = \gH_J \oplus \eta_- \oplus \eta_+. \]
Notice also that the action of $K_J(A)$ on $\gZ_J$ is trivial, and that $\gZ_J$ is orthogonal to $\gG_J$. Let $\mbox{B}(J)$ denote the restriction of $\mbox{B}$ to $\gG_J$. From the above observation, it is clear that extending $\mbox{B}(J)$ to a $K_J(A)$-invariant Hermitian inner-product on $\gG(A)$, is equivalent to constructing a (non-equivariant) Hermitian inner-product on $\gZ_J$. Let us therefore define the space of extensions:

\begin{defn} Let $\mbox{\bf Met}(J)$ denote the space of $K_J(A)$-invariant Hermitian inner-products on $\gG(A)$, extending $\mbox{B}(J)$. Hence, $\mbox{\bf Met}(J)$ is equivalent to the space of Hermitian inner-products on $\gZ_J$, and in particular, it is contractible. 
\end{defn}
\noindent
Notice that there is an obvious restriction map from $\mbox{\bf Met}(K)$ to $\mbox{\bf Met}(J)$ if $J \subseteq K$. Thus we obtain an inverse system of spaces, $\mbox{\bf Met}(\bullet)$, on the poset $\gC(A)$.

\medskip
\noindent
Let $\mbox{\bf Met}$ be the homotopy inverse limit of this system \cite{BK}. In simple terms, $\mbox{\bf Met}$ is nothing other than the space of Hermitian inner-products on $\gG(A)$, which are equivariantly parametrized over the building $X(A)$, and such that the metric over any point in the fundamental domain in $X(A)$ belongs to some space $\mbox{\bf Met}(J)$. 

\noindent
Since $\mbox{\bf Met}(J)$ is contractible for all $J$, it is easy to see using a spectral sequence argument for homotopy inverse limits, that $\mbox{\bf Met}$ is weakly contractible. In particular, it is non-empty. Therefore we conclude:

\begin{claim}
The topological space consisting of families of Hermitian inner-products on $\gG(A)$, equivariantly parametrized over $X(A)$ and extending $\mbox{B}$, is non-empty. 
\end{claim}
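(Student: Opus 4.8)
The plan is to reduce the statement to a standard homotopy-theoretic fact about homotopy inverse limits of diagrams of contractible spaces indexed by a finite poset. First I would set up the indexing category precisely: the space of equivariant metrics on $\gG(A)$ parametrized over $X(A)$ is, by construction of $X(A)$ as a homotopy colimit $\hocolim_{J \in \gC(A)} K(A)/K_J(A)$, the same as the space of compatible choices of $K_J(A)$-invariant Hermitian inner products extending the fixed form $\mbox{B}$ over each face of the fundamental domain $|\gC(A)|$; this is exactly the homotopy inverse limit $\holim_{J \in \gC(A)} \mbox{\bf Met}(J)$ of the inverse system $\mbox{\bf Met}(\bullet)$ over the poset $\gC(A)$. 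The identification $\mbox{\bf Met}(J) \simeq \Met(\gZ_J)$, the space of Hermitian inner products on the (finite-dimensional) centralizer subspace $\gZ_J$, was already recorded in the excerpt, together with the observation that each $\mbox{\bf Met}(J)$ is nonempty and convex, hence contractible.

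Next I would invoke the Bousfield--Kan spectral sequence for the homotopy inverse limit \cite{BK}: for a diagram $F$ on a small category $\mathcal{C}$ one has
\[
E_2^{p,q} = \varprojlim{}^p \, \pi_q F \;\Longrightarrow\; \pi_{q-p} \holim_{\mathcal{C}} F,
\]
provided the homotopy groups of the values of $F$ are compatibly based, which holds here once we fix a basepoint in $\mbox{\bf Met}$ (we will establish nonemptiness at the end, so strictly one first runs the argument over the non-based higher derived limits of $\pi_0$, which vanish, to get nonemptiness, then runs it again based). Since every $\mbox{\bf Met}(J)$ is contractible, $\pi_q F$ is the trivial (constant zero) functor for all $q \geq 0$, so $\varprojlim{}^p \pi_q F = 0$ for all $p, q$. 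Hence every $E_2$ term vanishes and the spectral sequence forces $\pi_k \holim_{\gC(A)} \mbox{\bf Met}(\bullet) = 0$ for all $k \geq 1$, while $\pi_0$ is a single point because $\varprojlim{}^0$ and $\varprojlim{}^1$ of the constant one-point functor are trivial, so the homotopy limit is connected and nonempty. Therefore $\mbox{\bf Met}$ is weakly contractible; in particular it is nonempty, which is all that is needed in Section 6.

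The only genuinely delicate point — and the one I would be most careful about — is the identification of the space of ``equivariantly parametrized families of metrics over $X(A)$ extending $\mbox{B}$'' with the homotopy inverse limit $\holim_{J} \mbox{\bf Met}(J)$, rather than the strict inverse limit. The issue is that a family parametrized over the realization $|\gC(A)|$ of the nerve must be specified coherently on every simplex, i.e.\ on every chain $J_0 \subset J_1 \subset \cdots \subset J_m$, and the datum of such coherent choices — a metric at each barycenter lying in the appropriate $\mbox{\bf Met}(J)$, linearly interpolated, with the requisite invariance under the pointwise stabilizer — is precisely the simplicial (Bousfield--Kan) model for $\holim$ of the $\gC(A)$-diagram $\mbox{\bf Met}(\bullet)$; the equivariance over $K(A)$ is then automatic since $K(A)$ acts through the homotopy colimit structure and the stabilizer of a point in the $J$-face is $K_J(A)$, which is exactly the group for which $\mbox{\bf Met}(J)$ records invariance. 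Once this identification is granted, the rest is the formal spectral-sequence argument above. I would write the identification out carefully, noting that because each $\mbox{\bf Met}(J)$ is moreover convex (an affine space of positive-definite forms), one can even avoid the spectral sequence and argue directly: a standard obstruction-theoretic induction over the skeleta of $|\gC(A)|$ extends any partial compatible family, since each extension problem amounts to extending a map from a simplex boundary into a convex set.
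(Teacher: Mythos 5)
Your proposal is correct and follows essentially the same route as the paper's Appendix: identify the space of equivariant families with $\holim_{J\in\gC(A)}\mbox{\bf Met}(J)$, observe each $\mbox{\bf Met}(J)$ is contractible (being the space of Hermitian inner-products on $\gZ_J$), and conclude via the Bousfield--Kan spectral sequence for homotopy inverse limits. Your extra care about basing/nonemptiness and the alternative convexity-based obstruction argument are welcome refinements but not a different method.
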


\end{document}